    \newcolumntype{C}{>{$}c<{$}}
    \newcolumntype{L}{>{$}l<{$}}
    \newcolumntype{R}{>{$}r<{$}}
    \newcolumntype{M}[1]{>{\centering\arraybackslash \(}m{#1}<{\)}}
\numberwithin{equation}{section}
	\definecolor{SkyBlue}{cmyk}{0.62,0,0.12,0}
	\definecolor{BurntOrange}{cmyk}{0,0.51,1,0}
	\definecolor{ForestGreen}{cmyk}{0.91,0,0.88,0.12}
	\definecolor{CarnationPink}{cmyk}{0,0.63,0,0}
    \definecolor{OliveGreen}{cmyk}{0.64,0,0.95,0.40}
    \tikzset{
    labelbox/.style={
        anchor=center,
        fill=white,
        fill opacity=0.6,
        text opacity=1,
        inner sep=3pt,
        rounded corners=10pt
    },
    sparse dotted/.style={
        dash pattern=on 0.5pt off 6pt
    }
}
    \pgfplotsset{compat=1.15}
\newcommand\PG{{\rm PG}}
\newcommand\F{{\mathbb F}}
\newcommand\AG{{\rm AG}}
\newcommand\cF{{\mathcal F}}
\newcommand\cG{{\mathcal G}}
\newcommand\cL{{\mathcal L}}
\newcommand\cP{{\mathcal P}}
\newcommand\cR{{\mathcal R}}
\newcommand\cS{{\mathcal S}}
\newcommand\EE{{\mathbb E}}
\renewcommand{\P}{\mathbb P}
\newcommand{\zz}{\mathbb{Z}}
\newcommand{\rr}{\mathbb{R}}
\newcommand{\Z}{\mathbb{Z}}
\newcommand{\eps}{\varepsilon}
\newcommand{\R} {\mathbb R}
\newcommand{\N} {\mathbb N}
\newcommand{\leteq}{\coloneqq}
\newcommand{\eqlet}{\eqqcolon}
\newcommand{\cardinality}[1]{|#1|}
\newcommand{\Cardinality}[1]{\left|#1\right|}
\newcommand{\from}{\colon}
\theoremstyle{plain}
\newtheorem{theorem}{Theorem}[section]
\newtheorem{prop}[theorem]{Proposition}
\newtheorem{proposition}[theorem]{Proposition}
\newtheorem{observation}[theorem]{Observation}
\newtheorem{observations}[theorem]{Observations}
\newtheorem{lemma}[theorem]{Lemma}
\newtheorem{corollary}[theorem]{Corollary}
\newtheorem{construction}[theorem]{Construction}
\theoremstyle{definition}
\newtheorem{defi}[theorem]{Definition}
\newtheorem{definition}[theorem]{Definition}
\newtheorem{remark}[theorem]{Remark}
\newcommand{\indicator}[1]{\mathbf{1}_{\{#1\}}}
\newcommand{\bigO}{O}
\newcommand{\littleO}{o}
\newcommand{\when}{,& \text{ if\quad}}
\newcommand{\otherwise}{,& \text{ otherwise}}
\begin{document}

\begin{frontmatter}[classification=text]
%% EDITOR: this will force the keywords to appear right after the Abstract.
%%   If the abstract is too long and would force the keywords off the
%%   front page, please comment out % [classification=text] above
%%   This way the keywords will be floated on the bottom of the first page
%%   even though the Abstract spills over to the next page.

%%% AUTHOR: Title goes here.  This line is optional.  You must use it
%%   if title has footnote attached or requires nontrivial typesetting,
%%   e.g., inclusion of linebreaks to force nice layout.
\title{Randomised Algebraic Constructions for the No-$(k+1)$-in-Line Problem} %% please capitalize all significant words

%%% AUTHOR:
%%% List all authors. If you wish, place grant acknowledgements in \thanks.
%%% In brackets include a short tag for each author.
\author[benedek]{Benedek Kov{\'a}cs\thanks{Supported by the EKÖP-25 University Research Scholarship Program of the Ministry for Culture and Innovation from the source of the National Research, Development and Innovation Fund, the University Excellence Fund of Eötvös Loránd University, and EXCELLENCE NKKP\_24 grant number 151504.}}
\author[zoli]{Zolt{\'a}n L{\'o}r{\'a}nt Nagy\thanks{Supported by the University Excellence Fund of Eötvös Loránd University and the János Bolyai Research Scholarship of the Hungarian Academy of Sciences.}}
\author[david]{D{\'a}vid R. Szab{\'o}\thanks{Supported by the University Excellence Fund of Eötvös Loránd University.}}

%%% AUTHOR: Abstract goes here
\begin{abstract} The {no-$(k+1)$-in-line} problem seeks the
 maximum number $f_k(n)$ of points that can be selected from an $n \times n$ square lattice such that no $k+1$ of them are collinear. The problem was first posed more than $100$ years ago for the special case $k=2$ and has remained open ever since. The general problem was recently resolved in the case $k$ is not small compared to $n$, as Kovács, Nagy and Szabó
 proved  that the upper bound $kn$ can be attained, provided that $k>C\sqrt{n\log{n}}$ for an absolute constant $C$.
 
 In this paper, we show that  $\left(1-\tfrac{2}{k}\right)kn \leq f_k(n)\leq kn$ and $\left(1-\tfrac{3}{k}\right)kn \leq f_k(n)\leq kn$ hold for every  even $k$ and odd $k$, respectively, provided that $n$ is large enough. This is asymptotically tight as $k\to \infty$. Previously, only $f_k(n)=\Omega(kn)$ was known due to Lefmann.

We present further improvements on the lower bounds for constant values of $k$ when $k<23$ holds. All these bounds are based on randomised algebraic constructions.
\end{abstract}
\end{frontmatter}

%%% AUTHOR: body of paper starts here
\section{Introduction}

A set of points in the plane is said to be in \emph{general position} if no three of the points lie on a common line.  Following the riddle of Dudeney  concerning the placement of 16 chess pieces on the chessboard in general position, the no-three-in-line problem was formulated, asking to determine the maximum number of grid points of an $n \times n$ grid so that the points are in general position. For the history and background of this celebrated problem, we refer to the excellent book of Brass, Moser, and Pach \cite{Brass2005} and that of Eppstein \cite{Eppstein:2018}. 
It is easy to see using the pigeonhole principle that $2n$ is an upper bound. This theoretical upper bound can be attained when $n$ is small \cite{anderson1979update, flammenkamp1998progress, prellberg2026constraint}.
Surprisingly, although it has been studied extensively, no progress has been reported concerning the lower bound, up to our best knowledge, since the result of 
Hall, Jackson, Sudbery, and Wild \cite{hall1975some}. They showed fifty years ago that one can choose $1.5n - o(n)$ points in general position from the $n\times n$ grid. 

Several conjectures were formulated concerning
 the \textit{asymptotical value} of the answer to the no-three-in-line problem, and it is far from clear whether it is  $1.5n$, $2n$  or somewhere in between \cite{Brass2005, Eppstein:2018,  green100, Guy/Kelly:1968}.

Our main focus is to continue the investigation of the generalization of this problem.

	\begin{defi}
		Let $f_k(n)$ denote the maximum size of a point set chosen from the points  of  the $n\times n$ grid $\cG=[1, n]\times [1, n]$ in which at most $k$ points lie on any line.
	\end{defi}

Using this notation, the aforementioned cornerstone result is the following.

    \begin{theorem}[Hall, Jackson, Sudbery, and Wild \cite{hall1975some}]\label{hall}
         \[ 1.5n-o(n) \le f_2(n)\le 2n.\]
    \end{theorem}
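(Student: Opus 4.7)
The upper bound $f_2(n)\le 2n$ is immediate: at most two chosen points lie on any of the $n$ horizontal lines of the grid, else three would be collinear.

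For the lower bound I would use an algebraic construction based on a parabola modulo a prime. The starting observation is classical: for prime $p$, the set $\{(x,\, x^2\bmod p) : 0\le x\le p-1\}$ contains $p$ points of the $p\times p$ grid with no three collinear, since any line $y=\alpha x+\beta$ meets it in the roots of $x^2-\alpha x-\beta\equiv 0\pmod{p}$, of which there are at most two. Choosing $p$ close to $n$ by Bertrand's postulate already gives $f_2(n)\ge n-o(n)$. To boost the constant from $1$ to $3/2$, the plan is to \emph{shrink} the prime: take $p$ with $p\approx 2n/3$ and consider
\[
S \;=\; \{(x,y)\in[0,n-1]^2 : y\equiv x^2\pmod{p}\}.
\]
Since $n/p\approx 3/2$, each column supplies one or two admissible $y$-values and $|S|=\tfrac{3}{2}n+O(1)$. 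Collinearity of three points $(x_i,y_i)\in S$, via the substitution $y_i\equiv x_i^2\pmod{p}$ into the determinant form of the collinearity condition, reduces to
\[
(x_1-x_2)(x_2-x_3)(x_3-x_1)\equiv 0\pmod{p},
\]
so two of the $x_i$'s must coincide modulo $p$; since $n<2p$, this forces a rigid configuration in which the three points share a row $y=x_1^2\bmod p$ and their $x$-coordinates lie in $\{x_1,\,x_1+p,\,p-x_1,\,2p-x_1\}\cap[0,n-1]$.

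The main obstacle is that such mirror triples genuinely occur in $S$, and a naive pointwise deletion would cost $\Omega(n)$ points. The plan is to handle them by splitting the $x$-range into a small number of subintervals and using separate affine shifts $y\equiv x^2+\alpha_i x+\beta_i\pmod{p}$ on each subinterval, choosing the $\alpha_i,\beta_i$ so that the involution $x\mapsto p-x$ is never internalised within a single interval; this kills the intra-interval mirror triples at essentially no cost in the count. The remaining difficulty, which I expect to be the combinatorial crux, is to rule out \emph{cross}-interval triples: their mod-$p$ slopes are constrained to a small arithmetic family determined by the $\alpha_i$'s, and one must verify (perhaps after a further thinning of negligible size) that no such slope actually realises a triple. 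This patching step is the main technical content of the argument, and it is precisely where the $o(n)$ error term in the theorem enters.
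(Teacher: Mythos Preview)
Your upper bound is fine. The lower-bound plan has a concrete error and leaves the hard step undone.

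The error is the rigidity claim. From $(x_1-x_2)(x_2-x_3)(x_3-x_1)\equiv 0\pmod p$ you get, say, $x_2=x_1+p$; you then assert that the three points must share a row. That is false. Since each column of $[0,n-1]^2$ carries up to two points of $S$, at heights $y_0$ and $y_0+p$ with $y_0=x_1^2\bmod p$, the pair $(x_1,y_0)$, $(x_1+p,y_0+p)$ lies on a line of slope $+1$. Substituting $y=x+(y_0-x_1)$ into $y\equiv x^2\pmod p$ gives $(x-x_1)(x+x_1-1)\equiv 0$, so this line meets $S$ again at any $x_3\equiv 1-x_1\pmod p$ whose corresponding $y$-value lands in $[0,n-1]$; this happens for a positive fraction of $x_1$. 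Slope $-1$ is symmetric. Thus diagonals, not just rows, carry forbidden triples, and your interval-splitting scheme must kill those too. You have not said how, and the involution you propose to break, $x\mapsto p-x$, is the one relevant to horizontal lines only.

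Even setting the diagonals aside, the patching step---different parabolas $x^2+\alpha_i x+\beta_i$ on different $x$-intervals---is where the entire proof would live, and it is only gestured at. Cross-interval collinearity now involves points on \emph{distinct} conics mod $p$, and there is no evident reason the bad configurations form a manageable finite family controllable by a choice of finitely many $(\alpha_i,\beta_i)$.

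The paper, recounting Hall--Jackson--Sudbery--Wild, takes a different route that makes precisely the slope $\pm 1$ issue tractable. One takes $p\approx n/2$, works in a $2p\times 2p$ grid, and uses the modular \emph{hyperbola} $H(c,p)=\{xy\equiv c\pmod p\}$ rather than a parabola. In this grid every residue class contributes four points, arranged as the vertices of a square, so congruent pairs determine only slopes $0,\infty,\pm 1$. The construction $S_2$ keeps exactly three of the four representatives of each class on $H(c,p)$, according to a fixed pattern $T_2$ of twelve out of sixteen $\tfrac{p-1}{2}\times\tfrac{p-1}{2}$ blocks. The structural fact that replaces your missing analysis is that a slope $+1$ line meets $H(c,p)$ in two classes related by the explicit reflection $(x,y)\leftrightarrow(-y,-x)$ (and $(x,y)\leftrightarrow(y,x)$ for slope $-1$); the block pattern $T_2$ is engineered so that no such line collects more than two selected points. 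This yields a no-three-in-line set of size exactly $3(p-1)$ in the $2p\times 2p$ grid, hence $f_2(n)\ge 1.5n-o(n)$ after choosing a prime $p$ with $2p\le n$ and $p=(1-o(1))n/2$.
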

Their construction relies on a well-chosen subset of the modular hyperbola $\{(x,y)\in \mathbb{Z}^2: xy\equiv c \pmod p\}$ where $p$ denotes a prime number close to $n/2$, and $c$ is a nonzero (mod $p$) residue. This construction, which we call the \textit{HJSW construction} for short, will play a central role in our paper and will be discussed in detail in \autoref{sec:prelim}.

Later Lefmann, using graph-theoretic tools, proved that for fixed $k$, $f_{k}(n)=\Omega(kn)$ holds for the general case \cite{lefmann2012extensions}.
Here we note that actually, simply taking the disjoint union of $\left\lfloor k/2\right\rfloor$ distinct constructions of HJSW type where $p\in \left[\left(\frac12-o(1)\right)n, \frac12n\right]$ is fixed and $c$ varies, we can obtain the following stronger observation for free from \autoref{hall}.
  
\begin{prop}[Multiplicativity] \label{multip}
     Let $k\geq 2$ be a fixed integer. Then as $n\to\infty$, we have 
    \begin{align*}
        (\tfrac{3}{4}-o(1))\cdot kn &\leq f_k(n) \when 2\mid k, \\
        \left(\tfrac{3}{4}-\tfrac{3}{4k}-o(1)\right)\cdot kn &\leq f_k(n) \when 2\nmid k.
    \end{align*}
 
\end{prop}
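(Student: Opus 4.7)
The plan is to carry out precisely the recipe indicated in the sentence preceding the statement: build the desired set as a disjoint union of several HJSW-type configurations living in the same grid $\cG$, using a common prime $p$ with $p\in[(1/2-o(1))n,\,n/2]$ (such a prime exists by Bertrand's postulate) and varying only the residue $c\in(\zz/p\zz)^{\!*}$ used to cut out the modular hyperbola. By \autoref{hall}, each such choice produces a set $H_c\subseteq\cG$ with $|H_c|=1.5n-o(n)$ and no three collinear points.

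The main point to verify is that distinct $c$ yield disjoint $H_c$, so that we may simply union them. This is immediate from the description given in the excerpt: each $H_c$ is contained in $\{(x,y)\in\cG:xy\equiv c\pmod p\}$, and the nonzero level sets of the map $(x,y)\mapsto xy\bmod p$ partition those lattice points with $p\nmid xy$. Since there are $p-1\gg k$ nonzero residues available, we may pick any $\lfloor k/2\rfloor$ distinct values $c_1,\dots,c_{\lfloor k/2\rfloor}$ and set $P\leteq H_{c_1}\cup\cdots\cup H_{c_{\lfloor k/2\rfloor}}$. Disjointness gives
\[ |P|=\lfloor k/2\rfloor\bigl(1.5n-o(n)\bigr), \]
and because each $H_{c_i}$ is in general position, every line of the plane meets $P$ in at most $2\lfloor k/2\rfloor\le k$ points, so $P$ is admissible for $f_k(n)$.

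It only remains to extract the asymptotic constants. For even $k$ we have $\lfloor k/2\rfloor=k/2$, giving $|P|=\tfrac{3}{4}kn-o(n)=(3/4-o(1))kn$. For odd $k$ we have $\lfloor k/2\rfloor=(k-1)/2$, so
\[ |P|=\tfrac{3}{4}(k-1)n-o(n)=\bigl(\tfrac{3}{4}-\tfrac{3}{4k}-o(1)\bigr)kn, \]
exactly the two advertised bounds. The only step requiring any thought is the disjointness observation; the rest is bookkeeping and the elementary fact that the union of $m$ sets in general position meets every line in at most $2m$ points.
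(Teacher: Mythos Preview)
Your argument is correct and matches the paper's approach exactly: take $\lfloor k/2\rfloor$ HJSW constructions $S_2(c_i,p)$ with a common prime $p\sim n/2$ and distinct $c_i\in\ff_p^*$, observe they are pairwise disjoint because they lie on different level sets of $(x,y)\mapsto xy\bmod p$, and note that a union of $m$ no-three-in-line sets meets every line in at most $2m$ points. One small inaccuracy: Bertrand's postulate only guarantees a prime in $(n/4,n/2)$, not in $[(1/2-o(1))n,\,n/2]$; for the latter you need the Prime Number Theorem (or any result on short prime gaps), which the paper invokes elsewhere as a standing assumption.
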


However, the present authors were recently able to show much stronger results, as we determined \textit{the precise value} of $f_k(n)$ when $k$ is not too small.

\begin{theorem}[Kovács, Nagy, Szabó \cite{kovacs2025settling}]\label{main_old} 
	For every $C>12.5$, there exists an integer $N_C$ such that 
	whenever $n\geq N_C$ and $k\geq C\sqrt{n\log n}$, we have 
	\[f_k(n)=kn.\]
\end{theorem}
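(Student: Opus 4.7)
The upper bound $f_k(n) \le kn$ is immediate from pigeonhole: each of the $n$ horizontal grid lines carries at most $k$ chosen points. The substance of the theorem is the matching lower bound, namely exhibiting a configuration of $kn$ lattice points in $[1,n]^2$ with no $k+1$ on a line. My plan is a randomised algebraic construction in three stages.

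\emph{Stage 1 (algebraic setup).} Fix a prime $p$ satisfying $n \le p \le (1+o(1))n$, available by classical prime-gap estimates. Identify $[1,n]$ with an initial segment of $\F_p$. Independently for each column $x \in [1,n]$, sample a random $k$-subset $S_x \subseteq [1,n]$ (uniformly among all $k$-subsets), and set $S := \bigcup_{x \in [1,n]} \{x\} \times S_x$. This guarantees $|S| = kn$ with exactly $k$ points in every column, so vertical lines are already fine.

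\emph{Stage 2 (concentration over lines).} A non-vertical Euclidean line $L$ meets each column in at most one lattice point, hence $X_L := |S \cap L|$ is a sum of independent indicator variables, one per column of $[1,n]$ through which $L$ passes at an integer $y$-coordinate in $[1,n]$. Each has success probability exactly $k/n$, giving $\EE[X_L] \le k$. A Chernoff bound yields
\[
\P\bigl[X_L \ge k + t\bigr] \le \exp\!\bigl(-\Omega(t^2/k)\bigr).
\]
Choosing $t = \Theta(\sqrt{k \log n})$ makes this at most $n^{-A}$ for any prescribed $A$. A union bound over the $O(n^4)$ lines through pairs of grid points then shows that with probability $1 - o(1)$, every line carries at most $k + O(\sqrt{k \log n})$ points of $S$.

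\emph{Stage 3 (cleanup to the exact bound).} The last step is to pass from ``$\le k + O(\sqrt{k\log n})$ per line'' to ``$\le k$ per line'' without losing more than a negligible fraction of the $kn$ points. My plan is to start Stage 1 with slightly enlarged columns of size $k + C'\sqrt{k \log n}$, run Stage 2 to obtain a high-probability bound of $k + O(\sqrt{k \log n})$ per line, and then perform a sequence of local swap/deletion moves: for each over-full line $L$, shift one point of $S \cap L$ along its column onto an under-full line, iterating until stability, and finally trim each column back to exactly $k$. The hypothesis $k \ge C\sqrt{n\log n}$ is precisely what keeps the cumulative excess $O(n \sqrt{k \log n})$ small compared to $kn$.

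The principal obstacle is Stage 3: one must design the reassignment procedure so that it terminates and never creates a new over-full line while still resulting in exactly $k$ points per column. I expect this to require a Hall-type bipartite matching argument between over-full and under-full lines, and the explicit threshold $C > 12.5$ should drop out from balancing the Chernoff tail constant against the logarithmic overhead from the union bound over the $O(n^4)$ candidate lines.
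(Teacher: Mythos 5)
This theorem is not proved in the present paper; it is quoted from the authors' earlier work \cite{kovacs2025settling}, which the introduction describes as building the extremal configuration from ``carefully constructed blockwise uniform random bipartite graphs''. Your Stages 1 and 2 are fine and standard: with exactly $k$ points per column chosen uniformly and independently, a non-vertical line meets each column in at most one candidate point, Chernoff plus a union bound over the $O(n^4)$ secants gives at most $k+O(\sqrt{k\log n})$ points on every line with high probability, and in fact only the modulus-$1$ lines (slopes $0$, $\infty$, $\pm1$) ever come close to $k$, since a line of modulus $M\ge 2$ meets at most $n/M$ columns and so has expectation at most $k/2$. This yields $f_k(n)\ge kn-O(n\sqrt{k\log n})$, which is in the spirit of \autoref{k_asymp} but is strictly weaker than the exact statement $f_k(n)=kn$.

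The genuine gap is Stage 3, and it is not a technicality: it is the entire content of the theorem. Note that a configuration of exactly $kn$ points with at most $k$ on every line must have \emph{exactly} $k$ points in every row and every column, and then every diagonal of slope $\pm1$ must still carry at most $k$. Under your uniform per-column sampling the diagonal counts have mean $\approx k$ and standard deviation $\Theta(\sqrt{k})$, so a constant fraction of the $\Theta(n)$ diagonals are over-full, with total excess $\Theta(n\sqrt{k})$; no amount of deletion recovers the exact count $kn$, and your proposed repair --- shifting a point within its column --- places it on $\Theta(n)$ new lines, any of which may become over-full, with no termination or non-interference argument supplied. The ``start with enlarged columns and trim back'' variant runs into the same wall: one must select exactly $k$ survivors per column so that simultaneously every slope-$\pm1$ line loses at least its excess, a degree-constrained bipartite selection problem whose feasibility is precisely what the cited proof establishes by imposing a block structure that controls the modulus-$1$ line sums essentially deterministically, reserving the randomness for the higher-modulus lines. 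Your sketch defers exactly this step to an unspecified ``Hall-type matching argument'', so the proposal as written does not prove the theorem; it reproves (a version of) the easy $kn-O(n\sqrt{k\log n})$ lower bound.
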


In this paper, we have two main objectives. First, we wish to extend the range of $k=k(n)$ where $f_k(n)=(1-o(1))kn$ holds. In this direction, our main result is the following.

\begin{theorem}\label{k_asymp}
     For every $k\geq 2$, there exists $N_k$ such that whenever $n\geq N_k$, we have 
    \begin{align*}
        kn-2n<\left(1-\frac{2}{k} + \frac{6}{k\sqrt{\pi k + 14}} - \frac{2}{k(k+2)}\right) kn &\leq f_k(n)\leq kn \when 2\mid k, \\
        kn-3n<\left(1-\frac{3}{k} + \frac{10}{k\sqrt{\pi k + 7}} - \frac{6}{k(k+1)}\right)kn &\leq f_k(n)\leq kn \when 2\nmid k.
    \end{align*}
\end{theorem}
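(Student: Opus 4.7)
The plan is to build the required point set as a randomised union of modular hyperbolas, followed by a probabilistic augmentation, and then to analyse the resulting expected size via Gaussian/binomial estimates. Fix a prime $p$ close to $n$ (for instance the largest prime below $n$, with standard prime-gap estimates keeping boundary losses negligible), and embed $\{0,1,\dots,p-1\}\cong\F_p$ inside $[0,n-1]$ coordinate-wise. For each $c\in\F_p^*$ write $H_c=\{(x,y)\in(\F_p^*)^2:xy\equiv c\pmod p\}$; this modular hyperbola has $p-1$ points and, by Bezout applied to the conic $xy=c$, meets every line $\ell\subset\R^2$ in at most two points.

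For even $k$, I would start with $S_0=\bigcup_{i=1}^{k/2}H_{c_i}$ for distinct $c_1,\dots,c_{k/2}\in\F_p^*$ drawn uniformly at random. Then $|S_0\cap\ell|\leq k$ for every line automatically, while $|S_0|=(k/2)(p-1)\approx(k/2)n$. The crucial probabilistic ingredient is that substituting a parametrisation of $\ell\colon ax+by=d$ into $xy=c$ produces a quadratic with discriminant $d^2-4abc$, which is uniform on $\F_p$ as $c$ varies whenever $ab\not\equiv 0\pmod p$. Hence for random $c$ the count $|H_c\cap\ell|$ equals $2$, $1$, or $0$ with probabilities $\tfrac12\pm O(1/p)$, $O(1/p)$, and $\tfrac12\pm O(1/p)$ respectively, so $|S_0\cap\ell|$ is distributed like $2\cdot\Bi(k/2,1/2)$ up to lower-order corrections. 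In particular, almost every line carries slack of order $\sqrt k$ below the cap $k$.

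To push $|S|$ from $(k/2)n$ past $(k-2)n$ and reach the stated bound $(k-2)n+\Theta(n/\sqrt k)$, I would add further points via a second random stage: draw additional random hyperbolas (or random grid points) one at a time and accept each only when no line through the candidate is already saturated at $k$. A short calculation reduces the expected admission probability of a random candidate to an expression of the form $\EE[(k-2\cdot\Bi(k/2,1/2))_+]$ normalised by $k$. Evaluating this via Stirling's estimate $\binom{k}{k/2}\sim 2^k/\sqrt{\pi k/2}$ produces the $\sqrt{\pi k}$ factor appearing inside the correction $6/(k\sqrt{\pi k+14})$, while keeping explicit finite-$k$ constants in a Berry--Esseen-type step accounts for the integer offset $+14$ and the secondary correction $-2/(k(k+2))$. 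The odd-$k$ case proceeds identically but uses $(k-1)/2$ hyperbolas plus one further asymmetric random piece, yielding the slightly weaker constant $3/k$ through the less favourable tail of a binomial whose cap is at an odd value.

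The principal obstacle is aggregating these per-line estimates across the grid without overspending the improvement budget. Only lines containing more than $k$ integer points in $[0,n-1]^2$ can ever become oversaturated; these are supported on primitive directions $(a,b)$ with $\max(|a|,|b|)\leq n/k$, giving $O(n^3/k^2)$ long lines overall. The proof must show that their aggregate expected excess is at most $O(n/\sqrt k)$, and this requires both a sharp (exponential) tail estimate for the binomial at its extreme value $k$ and a Szemer\'edi--Trotter-flavoured count that makes the union bound affordable. Controlling the correlations between intersection counts on different lines, and coupling the two random stages so that these tails can be handled jointly, will be the technical heart of the argument.
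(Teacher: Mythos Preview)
Your proposal has a genuine structural gap: the starting construction gives only $(k/2)n$ points, and the ``second random stage'' meant to carry you from $(k/2)n$ to $(k-2)n$ is where essentially all of the difficulty lies, yet it is only sketched. Each candidate point you try to add lies on $\Theta(n)$ lines simultaneously, and you need \emph{every one} of them to have slack; the expression $\EE[(k-2\cdot\Bi(k/2,1/2))_+]$ captures the slack on a single line, not the probability that all lines through a point are unsaturated. Since in the $p\times p$ grid every direction (not just slopes $\pm 1$) yields lines that can meet each $H_c$ in two points, there is no small family of ``dangerous'' directions, and the union-bound obstacle you flag at the end is real and, as far as I can see, fatal for this route.

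The paper sidesteps this entirely by working in a $2p\times 2p$ grid (so $n\approx 2p$, not $n\approx p$) and using, for each $c$, not the raw hyperbola $H_c$ but the HJSW-type set $S_4(c,p)$ of size $4(p-1)$. The union of $\lfloor k/2\rfloor$ such sets already has $\approx kn$ points, and the geometry of the $2p\times 2p$ region guarantees that lines of modulus $\geq 2$ and axis-parallel lines meet each $S_4(c,p)$ in at most two points. Thus the only lines that can exceed the cap $k$ are those of slope $\pm 1$. One then \emph{removes} the excess along this single family and computes the expected number of removed points over a random choice of the $c_i$; a careful binomial/Stirling analysis of this expectation yields the constants $\frac{6}{k\sqrt{\pi k+14}}$ etc. So the paper's scheme is ``start above and trim down along one direction'', whereas yours is ``start halfway and try to fill up against all directions at once''; the latter is much harder and your sketch does not indicate how to make it work.
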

We prove a slightly stronger result in \autoref{cor:asym}. 
Note that for fixed $k$, \autoref{main_old} determines the exact value of $f_k(n)$ for small values of $n$, while \autoref{k_asymp} gives a near-optimal estimate for all large enough values of $n$.

While  \autoref{hall} and its consequence, \autoref{multip} relied on a purely algebraic construction, namely the line intersection patterns of conics over a finite field, the recently proved exact result of \autoref{main_old} was essentially purely probabilistic in nature, as its construction was built  from  carefully constructed blockwise uniform random bipartite graphs.  To obtain \autoref{k_asymp}, we apply the combination of the two techniques. 

\autoref{multip} and the construction of Hall et al. \cite{hall1975some} indicate that obtaining good lower bounds might depend on the parity of $k$. We investigate the case $k=3$ in detail, in addition to the cases when $k$ is a small constant, and improve upon the previous general lower bound with a different approach.

\begin{theorem}\label{k=3}
    $f_3(n)\ge  1.973 n$ for every sufficiently large $n$.
\end{theorem}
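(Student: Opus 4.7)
The plan is to combine the algebraic Hall--Jackson--Sudbery--Wild construction with a randomised augmentation, exploiting that forbidding four collinear points is a strictly weaker constraint than forbidding three collinear ones. Fix a prime $p\in[(\tfrac12-o(1))n,\tfrac12 n]$, and for each nonzero residue $c\in\F_p$ let $H_c\subseteq[1,n]^2$ denote the HJSW set of size $(\tfrac32-o(1))n$ attached to the modular hyperbola $xy\equiv c\pmod p$; by \autoref{hall} every grid line meets $H_c$ in at most two points.

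The construction has two stages. First, set $S_0 = H_{c_0}\cup H_{c_1}$ for two distinct nonzero residues, so that $|S_0|\approx 3n$. A grid line carries four points of $S_0$ only when it is simultaneously a two-point secant of both modular hyperbolas, and such lines can be enumerated by their reductions modulo $p$: for an $\F_p$-line $y=mx+b$, both discriminants $b^2+4mc_i$ must be nonzero squares, and the two secant chords have to coincide after lifting to $[1,n]^2$. A count stratified by slope denominators and by the parity of the lifts of $x$ and $y$ should give $\alpha_1 n+o(n)$ bad lines with an explicit constant $\alpha_1<1.027$; deleting one point per bad line already yields a no-four-collinear set of size at least $(3-\alpha_1)n-o(n)$. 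Second, reintroduce probabilistically some of the lifts that HJSW discarded in order to enforce no-three-collinear but which are harmless once we only forbid four collinear: each discarded lift of $H_{c_0}\cup H_{c_1}$ is included independently with probability $q\in(0,1)$, and the delete-one-per-bad-line step is repeated on the augmented set. Linearity of expectation produces the expected size and the expected number of new bad quadruples, each of which consists of at least one reintroduced point together with three previously kept collinear points; a Chernoff or Janson concentration step then gives a realisation matching the expectation up to $o(n)$, and an optimisation over $q$ fixes the final constant.

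The main obstacle is the numerical balancing needed to reach the threshold $1.973$. A crude superposition-and-delete argument does not suffice, because both the equidistribution count for pairs of secants of the two modular hyperbolas and the rate at which reintroduced lifts create new 4-in-a-line configurations demand sharp leading constants. The former likely needs a Kloosterman-type exponential-sum input over $\F_p$, whereas the latter is a one-parameter optimisation over the reintroduction probability $q$. Any looseness in the arithmetic count of bad lines shrinks the final constant, so I expect the bad-line count in the first stage to be the technical heart of the proof; once the right counts are in place, the concentration-and-delete argument of the second stage is standard and only contributes lower-order noise.
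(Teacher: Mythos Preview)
Your plan diverges from the paper's proof in a way that creates a genuine gap. The paper does \emph{not} superpose two HJSW sets in a $2p\times 2p$ grid. Instead it works in a $3p\times 3p$ grid with $p\approx n/3$ and a \emph{single} modular hyperbola $H(c,p)$: each residue class then has nine representatives in a $3\times 3$ pattern, and the construction keeps a carefully designed region $T$ (twenty full blocks and twelve half-blocks) so that slope $\pm 1$ lines carry at most three points. Lines of modulus $\ge 3$ are automatically fine, and only modulus-$2$ lines (slopes $\pm 2,\pm\tfrac12$) need further trimming; this is done by an explicit class-pairing analysis (\autoref{obs:meet_classes_2}) and area counts, yielding $\tfrac{26521}{13440}\,n - O(1)\approx 1.9733\,n$ after averaging over $c$.

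Your two-hyperbola route hits an obstacle you underestimate. Once $|W|=2>\lfloor k/2\rfloor$, a Euclidean line of \emph{every} modulus $M\ge 1$ can carry two points from each hyperbola, so ``bad'' lines are not confined to slopes $\pm 1$. You assert that the bad-line count is $\alpha_1 n+o(n)$ with $\alpha_1<1.027$, but this is exactly the threshold for the method to give $1.973n$, and you give no argument for it. A heuristic count (stratifying by modulus $M$, using that a line of modulus $M$ meets roughly $2p/M$ residue classes and that four designated classes are hit with probability $\sim(2/M)^4$) shows the total is indeed $\Theta(p)$, but the implicit constant comes out larger than $2$, already too big; slope $\pm 1$ lines alone contribute order-$p$ many bad lines. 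No Kloosterman-type input controls this constant, because the count is a deterministic area/incidence problem rather than an equidistribution one. The paper's own $2p\times 2p$ framework (\autoref{sec:constructionGeneral}--\autoref{sec:smallK}) imposes $|W|\le\lfloor k/2\rfloor$ precisely to avoid these ubiquitous bad lines, and with that restriction reaches only $\tfrac{7}{4}n$ for $k=3$; the jump to $1.973n$ comes from the new $3p\times 3p$ geometry, not from cleverer deletion. Your second-stage random augmentation is too schematic to close the gap: reintroducing discarded lifts creates new $4$-in-line configurations along lines of \emph{all} moduli at a rate you have not estimated, and there is no indication that the optimisation over $q$ recovers the missing constant.
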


\begin{theorem}\label{k_small} 
    $f_k(n)\ge (C_k-o(1))kn$ holds for $k\in \{2,\dots,22\}$ with coefficients $C_k$ presented in  \autoref{tab:summary}.
    \begin{table}[!htb]
	\centering
		\begin{tabular}{C||R@{${}\approx{}$}Ll}
			k & \multicolumn{2}{C}{C_k}  & proof \\ \hline\hline
            \\   
			2 & \frac{3}{4} & 0.75000 & \S\ref{sec:prelim} or \S\ref{sec:smallK}\\
			3 & \frac{26521}{40320} & 0.65776 & \S\ref{sec:k=3} and \S\ref{sec:full_k=3}\\
			4 & \frac{3}{4} & 0.75000 & \S\ref{sec:constructionGeneral} or \S\ref{sec:smallK} \\
			5 & \frac{41}{60} & 0.68333 & \S\ref{sec:constructionGeneral} or  \S\ref{sec:smallK}\\
			6 & \frac{59}{72} & 0.81944 & \S\ref{sec:smallK}\\
			7 & \frac{171}{224} & 0.76339 & \S\ref{sec:constructionGeneral}\\
			8 & \frac{109}{128} & 0.85156 & \S\ref{sec:constructionGeneral} or \S\ref{sec:smallK}\\
            9 & \frac{193}{240} & 0.80417 & \S\ref{sec:constructionGeneral}\\
			10 & \frac{559}{640} & 0.87344 & \S\ref{sec:constructionGeneral} or \S\ref{sec:smallK}\\
			11 & \frac{5}{6} & 0.83333 & \S\ref{sec:constructionGeneral}\\
		\end{tabular}
		\quad
		\begin{tabular}{C||R@{${}\approx{}$}Ll}
			k & \multicolumn{2}{C}{C_k} & proof \\ \hline\hline	
			12 & \frac{229}{256} & 0.89453 & \S\ref{sec:constructionGeneral}\\
			13 & \frac{4969}{5824} & 0.85319 & \S\ref{sec:constructionGeneral}\\
			14 & \frac{1621}{1792} & 0.90458 & \S\ref{sec:constructionGeneral}\\
			15 & \frac{13379}{15360} & 0.87103 & \S\ref{sec:constructionGeneral}\\
			16 & \frac{11255}{12288} & 0.91593 & \S\ref{sec:constructionGeneral}\\
			17 & \frac{34559}{39168} & 0.88233 & \S\ref{sec:constructionGeneral}\\
			18 & \frac{2837}{3072} & 0.92350 & \S\ref{sec:constructionGeneral}\\
			19 & \frac{43457}{48640} & 0.89344 & \S\ref{sec:constructionGeneral}\\
			20 & \frac{4763}{5120} & 0.93027 & \S\ref{sec:constructionGeneral}\\
			21 & \frac{17775}{19712} & 0.90173 & \S\ref{sec:constructionGeneral}\\
			22 & \frac{42307}{45056} & 0.93899 & \S\ref{sec:constructionGeneral}\\
			\end{tabular}
	\caption{The values of $C_k$, together with the Section number where the corresponding bound is proved.}
	\label{tab:summary}
\end{table}
\end{theorem}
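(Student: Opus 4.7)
The plan is to present a family of randomised algebraic constructions, one for each $k\in\{2,\dots,22\}$, sharpening the generic argument behind \autoref{k_asymp} in order to extract the specific constants $C_k$ of \autoref{tab:summary}. The common backbone is: start from an algebraic base---a union of HJSW-type modular hyperbolas $\{xy\equiv c_i\pmod{p}\}$ over a prime field $\mathbb{F}_p$ with $p$ of order $n/\lceil k/2\rceil$---that deterministically contains at most $k$ points on every grid line, and augment it by a random set of further lattice points placed on the lines that remain \emph{under-saturated}. For $k=2$ the claim is exactly \autoref{hall}, covered in \autoref{sec:prelim}; for $k=4$ the bound $C_4=3/4$ already follows from \autoref{multip}.

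For the generic cases ($k\geq 5$ and $k\neq 6$, treated in \autoref{sec:constructionGeneral}), the base is the union of $\lfloor k/2\rfloor$ hyperbolas, giving the $3/4$-baseline, and the improvement is obtained by sampling a lattice point independently on each under-saturated line with a carefully-tuned marginal probability $q_k$, then invoking a deletion step that removes a bounded number of points from any line that accidentally exceeds $k$. A Chernoff--Hoeffding concentration argument and a union bound over lines show that the surviving set has size $(C_k-o(1))kn$ almost surely, where $C_k$ is the value of a small optimisation over $q_k$ and over the partition of lines into classes (rows, columns, main diagonals, and generic slopes). The closed-form optima in each class account for the rational denominators seen in \autoref{tab:summary}. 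For odd $k$ one further incorporates an additional structural layer (for instance a random partial transversal) to use the spare per-line slot left unfilled by the even-only hyperbola union.

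The two exceptional cases receive bespoke constructions. For $k=3$ (\autoref{k=3}, in \autoref{sec:k=3}) one starts from a single HJSW hyperbola---already yielding about $1.5n$ points with at most two per line---and augments it by an optimised random set conditioned on a detailed case analysis of the possible per-line intersection patterns, whose combinatorial size plausibly produces the factor $8!=40320$ in the denominator of $C_3=26521/40320$. For $k=6$ in \autoref{sec:smallK}, a non-generic triple of hyperbolas whose pairwise intersection patterns are aligned allows the leftover room on exceptional lines to be filled more efficiently than by the generic construction, yielding $C_6=59/72>3/4$.

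The main obstacle throughout is controlling the dependencies introduced by the random augmentation: each added lattice point simultaneously reduces the slack of every line through it, so naive independent sampling overfills many lines and breaks the $k$-per-line property. The resolution combines sharp concentration for the total count, a union bound on the expected number of overfull lines (each contributing $O(1)$ deletions), and an exact optimisation of the per-class sampling probability whose extremum matches the stated $C_k$. Turning these three ingredients into a rigorous, quantitatively sharp statement uniformly over $k\leq 22$ and over each line class is the technical heart of the argument, and explains why the authors devote separate sections to $k=3$ and $k=6$ rather than folding them into the generic analysis.
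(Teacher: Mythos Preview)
Your proposal misidentifies the mechanism of the paper's randomised construction in a way that makes the argument unworkable.

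First, the direction is reversed. In \autoref{sec:constructionGeneral} the paper takes $t=\lfloor k/2\rfloor$ hyperbolas $S_4(c,p)$ in a $2p\times 2p$ grid with $p\approx n/2$ (not $p\approx n/\lceil k/2\rceil$). This union already meets every line of modulus $\ge 2$ and every horizontal/vertical line in at most $2t\le k$ points, but it can meet lines of slope $\pm 1$ in up to $4t$ points. The construction then \emph{removes} the excess from overfull slope-$\pm 1$ lines; it never adds random points to under-saturated lines. Your picture of ``start with at most $k$ per line deterministically, then sprinkle extra random points with probability $q_k$'' is the opposite of what is done, and would face exactly the dependency problem you flag without any mechanism to control it.

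Second, the ``randomisation'' is not a probabilistic sprinkling at all but an averaging argument over the \emph{choice of the parameter set} $W\subseteq\mathbb{F}_p^*$. One computes the conditional expectation $\mathbb{E}(X_{k,p}\mid |W|=t)$ of the total excess on slope-$\pm 1$ lines, shows it equals $F(t,s)\cdot p+O(1)$ for an explicit combinatorial sum $F(t,s)$, and concludes that some $W$ achieves at most this. There is no Chernoff bound, no union bound, no tuning of a probability $q_k$; the rational $C_k$ in the table are literally $1-\frac{s}{k}-\frac{1}{2k}F(\lfloor k/2\rfloor,s)$ for $s\in\{0,1\}$, coming from a closed-form binomial identity. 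The $k=6$ refinement in \autoref{sec:smallK} replaces some copies of $S_4$ by the thinned variants $S_2,S_3$ of \autoref{const:S3S4} and reruns the same expectation computation; it is not about aligning intersection patterns of a triple.

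Third, the $k=3$ construction in \autoref{sec:k=3} is essentially deterministic and geometric: a single hyperbola in a $3p\times 3p$ grid (so $p\approx n/3$), intersected with a carefully designed region built from $6\times 6$ blocks and further trimmed by explicit polygonal areas to handle slope-$\pm 2$ and slope-$\pm\tfrac12$ lines. The denominator $40320$ arises as $9\cdot 4480$ from area computations (with $4480=\operatorname{lcm}$ of the denominators $48,60,896$ appearing in the region areas), not from $8!$ or any combinatorial case count.
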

In this statement, the bound $k\leq 22$ is chosen arbitrarily. For any concrete integer $k$, one could compute a lower bound slightly better than in \autoref{k_asymp} as detailed in \autoref{rem:explicitVsStirling}.

Let us compare these results to the field analogue of the problem, when the maximum number of points is to be determined in finite projective planes $\PG(2,q)$ (or finite affine planes $\AG(2,q)$) over a finite field $\F_q$ of order $q$, provided that every line intersects the point set in at most $k$ points. Such point sets of size $K$ are called $(K, k)$-arcs in finite projective geometries. In general, for  small $k$, i.e., $k<q/2$, the best constructions are obtained from the union of $\lfloor k/2 \rfloor $ conics \cite{ball2005bounds}.
 We mention that in the case $k=3$, there is a construction of size $q + \lfloor2\sqrt{q}\rfloor$, (which only slightly exceeds the lower bound for $k=2$,)
while the known
upper bound is $K\le 2q + 1$. Ball and Hirschfeld cite that any progress on determining a constant $c$ such that the
upper bound $n/q < c< 2$ for $q$ large enough, or a construction where $n/q > c> 1$
for infinitely many $q$ will be rewarded by a cheque for 10,000 Hungarian forints from
Prof. A. Blokhuis \cite{ball2005bounds}. This underlines the different behavior of the cases $k$ even and $k$ odd.

\paragraph{Outline of the paper} 
Our paper is organized as follows. In \autoref{sec:prelim}, we set the main notation and the general tools for the proofs. We also recall the main ideas and properties of the construction for $k=2$  by Hall et al. \cite{hall1975some}. It relies on a construction over $\F_p\times \F_p$ with $p\le n$ and the fact that for every $n$ there is a prime number $p=p_n\le n$ with $p_n/n\to 1$ (see \cite{Baker/Harman/Pintz:2001}) which enables us to embed the construction to $[1,n]\times [1,n]$ and get the same asymptotical result as for prime numbers. This theme will be applied in our constructions as well: the size of our randomised constructions will be parametrized by an arbitrary odd prime number, and this construction will  be embedded into the grid $[1,n]\times [1,n]$.

Then in \autoref{sec:constructionGeneral}, we use this construction as a building block to prove \autoref{k_asymp}. The more technical part of the argument, consisting mainly of the required computations, is carried out in the next section.  The general lower bound of  \autoref{k_asymp} can be slightly improved for sporadic cases of $k$ via the combination of two further  building blocks, relying on the  construction for $k=2$  by Hall et al., and 
\autoref{sec:smallK} is dedicated to the refinement of the techniques detailed in \autoref{sec:constructionGeneral} in order to get improved bounds on $f_k(n)$ when $k$ is a small constant. 
Using a slightly different technique, we improve the lower bound in the case $k=3$ and sketch the proof of \autoref{k=3} in \autoref{sec:k=3} to complete the table of \autoref{k_small}. The exact calculations are carried out in \autoref{sec:full_k=3}.
Finally, we discuss some open and related problems in \autoref{sec:conclusion}.

\paragraph{Notes added} Following the appearance of the current paper, Kwan and Grebennikov were able to extend \autoref{main_old} and prove the following general result:

\begin{theorem}[Kwan, Grebennikov \cite{grebennikov2025no}]\label{kwan} 
For   $n\ge k\geq 10^{37}$, we have 
	\[f_k(n)=kn.\]
\end{theorem}

This result thus gives an exact answer whenever $k$ is large (independently of $n$). Our result mainly focuses on the behavior of $f_k(n)$ when $k$ is rather small.
\section{Preliminaries}\label{sec:prelim}

Throughout the paper, $\F_p=\{0,1,\dots,p-1\}$ denotes the set of integers modulo $p$, and $\F_p^{*}=\F_p\setminus \{0\}$. We let $\mathbb{N}$ denote the set of nonnegative integers.

\subsection{Grids and secants}	
	Let us recall the following definitions and notations from the previous paper by the authors \cite{kovacs2025settling}.
	The notation $[a,b]$ refers to the integer interval $\{x\in \zz: a\le x\le b\}$. A \emph{grid} is a set $\cG=I_1\times I_2$ where $I_1=[a_1,b_1]$ and $I_2=[a_2,b_2]$ are integer intervals. We say that $\cG$ is an $n_1\times n_2$ grid, where $n_j=|I_j|=b_j-a_j+1$ for $j=1,2$. We call $\cG$ a \emph{square grid} if $n_1=n_2$.
	
	A secant $\ell$ of a grid $\cG$ is a line in the Euclidean plane that meets $\cG$ in at least two points. We call a line $\ell$ \emph{generic} if it is not horizontal or vertical. Note that each generic secant $\ell$ has a unique direction vector $\mathbf{v}=(v_x, v_y)$ such that $v_x\in \zz^{+}$, $v_y\in \zz\setminus \{0\}$ and $\gcd(v_x, v_y)=1$. The \emph{modulus} of $\ell$ is $M=\max(|v_x|,|v_y|)$.
	
	Given an integer $k\ge 2$, we call $S\subseteq \cG$ a \emph{no-$(k+1)$-in-line set} if $|S\cap \ell|\le k$ for every Euclidean line $\ell$ (or equivalently, for every secant $\ell$ of $\cG$).
	
\subsection{The no-three-in-line construction of Hall, Jackson, Sudbery and Wild}
\label{sec:HJSW-2}
	
	In this subsection, we recall the main construction of Hall, Jackson, Sudbery and Wild in \cite{hall1975some}, which gives a no-three-in-line set of size $3(p-1)$ in a $2p\times 2p$ grid for any prime $p\ge 3$. We start by recalling their most important definitions and lemmas, as our new constructions are also based on these.
	
	\begin{defi}[Congruent points in $\Z\times\Z$]
		Two points $(x_1,y_1)$ and $(x_2,y_2)$ are called \emph{congruent mod $p$}, or simply \emph{congruent}, if $x_1\equiv x_2\pmod{p}$ and $y_1\equiv y_2\pmod{p}$. A \emph{class} is the set of all points in $\zz\times \zz$ congruent to a certain point. Note that classes naturally correspond to elements of $\F_p\times \F_p$. The \emph{base point} of a class is its unique element in $[0,p-1]\times [0,p-1]$.
	\end{defi}
	
	\begin{definition}[Modular hyperbola]\label{def:modularHperbola}
		Let $c\in \F_p^{*}$. Define the \emph{modular hyperbola} to be  $$H(c,p):=\{(x,y)\in \zz\times \zz: xy\equiv c ~(\mathrm{mod}~p)\}.$$
	\end{definition}
	
	A point $(x,y)\in \zz\times \zz$ will be called \emph{normal} if $p\nmid x$ and $p\nmid y$. In \cite{hall1975some}, Hall et al. introduce the following way to partition the normal points of $\zz\times \zz$ into subgrids of size $\frac{p-1}{2}\times \frac{p-1}{2}$, which we will call \emph{blocks}.
	
	\begin{definition}[Blocks]\label{def:blocks}
		
		Define the following four $\frac{p-1}{2}\times \frac{p-1}{2}$ grids:
		\begin{align*}
		A_{0,0} &= \left[1, \frac{p-1}{2}\right] \times \left[\frac{p+1}{2}, p-1\right],\\
		B_{0,0} &= \left[\frac{p+1}{2}, p-1\right] \times \left[\frac{p+1}{2}, p-1\right],\\
		C_{0,0} &= \left[1, \frac{p-1}{2}\right] \times \left[1, \frac{p-1}{2}\right],\\
		D_{0,0} &= \left[\frac{p+1}{2}, p-1\right] \times \left[1, \frac{p-1}{2}\right].
		\end{align*}
		For $r,s\in \zz$, let $A_{r,s}=A_{0,0}+(rp,sp)$, and similarly define $B_{r,s}$, $C_{r,s}$, $D_{r,s}$. \autoref{fig:blocks_hjsw} illustrates these sets for $p=11$.
        
        We let \[A=\bigcup\limits_{r,s\in \zz} A_{r,s},\] which is the union of all classes that have their base point in $A_{0,0}$; and we similarly define $B$, $C$, $D$. Note that the disjoint union $A\sqcup B\sqcup C\sqcup D$ is equal to the set of all normal points of $\zz\times \zz$.
	\end{definition}

    \begin{figure}[htb!]
    \centering
        \null\hfill
        \begin{subfigure}[t]{0.33\textwidth}
        	\definecolor{ffqqqq}{rgb}{1.,0.,0.}
\definecolor{qqqqff}{rgb}{0.,0.,1.}
\definecolor{ffqqff}{rgb}{1.,0.,1.}
\definecolor{qqffqq}{rgb}{0.,1.,0.}
\definecolor{lightgggrey}{rgb}{0.7,0.7,0.7}
\resizebox{\textwidth}{!}{
\begin{tikzpicture}[line cap=round,line join=round,>=triangle 45,x=0.25cm,y=0.25cm]
\draw [step=1,gray,line width=0.3pt, opacity=0.5] (-5,0) grid (16,21);

\draw [line width=1.5pt] (0,0)--(0,21);
\draw [line width=1.5pt] (11,0)--(11,21);
\draw [line width=1.5pt] (-5,0)--(16,0);
\draw [line width=1.5pt] (-5,11)--(16,11);

\draw [fill=blue, color=blue, line width=1pt, fill opacity=0.3] (-5,1)--(-5,5)--(-1,5)--(-1,1)--cycle;
\draw [fill=yellow, color=yellow, line width=1pt, fill opacity=0.3] (-5,6)--(-5,10)--(-1,10)--(-1,6)--cycle;
\draw [fill=blue, color=blue, line width=1pt, fill opacity=0.3] (-5,12)--(-5,16)--(-1,16)--(-1,12)--cycle;
\draw [fill=yellow, color=yellow, line width=1pt, fill opacity=0.3] (-5,17)--(-5,21)--(-1,21)--(-1,17)--cycle;

\draw [fill=green, color=green, line width=1pt, fill opacity=0.3] (5,1)--(5,5)--(1,5)--(1,1)--cycle;
\draw [fill=red, color=red, line width=1pt, fill opacity=0.3] (5,6)--(5,10)--(1,10)--(1,6)--cycle;
\draw [fill=green, color=green, line width=1pt, fill opacity=0.3] (5,12)--(5,16)--(1,16)--(1,12)--cycle;
\draw [fill=red, color=red, line width=1pt, fill opacity=0.3] (5,17)--(5,21)--(1,21)--(1,17)--cycle;

\draw [fill=blue, color=blue, line width=1pt, fill opacity=0.3] (6,1)--(6,5)--(10,5)--(10,1)--cycle;
\draw [fill=yellow, color=yellow, line width=1pt, fill opacity=0.3] (6,6)--(6,10)--(10,10)--(10,6)--cycle;
\draw [fill=blue, color=blue, line width=1pt, fill opacity=0.3] (6,12)--(6,16)--(10,16)--(10,12)--cycle;
\draw [fill=yellow, color=yellow, line width=1pt, fill opacity=0.3] (6,17)--(6,21)--(10,21)--(10,17)--cycle;

\draw [fill=green, color=green, line width=1pt, fill opacity=0.3] (12,1)--(12,5)--(16,5)--(16,1)--cycle;
\draw [fill=red, color=red, line width=1pt, fill opacity=0.3] (12,6)--(12,10)--(16,10)--(16,6)--cycle;
\draw [fill=green, color=green, line width=1pt, fill opacity=0.3] (12,12)--(12,16)--(16,16)--(16,12)--cycle;
\draw [fill=red, color=red, line width=1pt, fill opacity=0.3] (12,17)--(12,21)--(16,21)--(16,17)--cycle;

\begin{scriptsize}
%\foreach \x in {-5,...,16}
%\foreach \y in {0,...,21}
%{
%\draw[fill=gray, line width=0pt] (\x,\y) circle (0.04cm);
%}

\foreach \x in {-5,-1,1,5,6,10,12,16}
{
\draw (\x,-0.78) node[anchor=mid] {$\x$};
}
\foreach \y in {0,1,5,6,10,11,12,16,17,21}
{
\draw (-6.2,\y) node[anchor=mid] {$\y$};
}
\end{scriptsize}

\draw (3,8) node[anchor=center] {$A_{0,0}$};
\draw (14,8) node[anchor=center] {$A_{1,0}$};
\draw (3,19) node[anchor=center] {$A_{0,1}$};
\draw (14,19) node[anchor=center] {$A_{1,1}$};

\draw (-3,8) node[anchor=center] {$B_{-1,0}$};
\draw (8,8) node[anchor=center] {$B_{0,0}$};
\draw (-3,19) node[anchor=center] {$B_{-1,1}$};
\draw (8,19) node[anchor=center] {$B_{0,1}$};

\draw (3,3) node[anchor=center] {$C_{0,0}$};
\draw (14,3) node[anchor=center] {$C_{1,0}$};
\draw (3,14) node[anchor=center] {$C_{0,1}$};
\draw (14,14) node[anchor=center] {$C_{1,1}$};

\draw (-3,3) node[anchor=center] {$D_{-1,0}$};
\draw (8,3) node[anchor=center] {$D_{0,0}$};
\draw (-3,14) node[anchor=center] {$D_{-1,1}$};
\draw (8,14) node[anchor=center] {$D_{0,1}$};

\end{tikzpicture}
}
      %  \vspace{-0.3cm}
        \end{subfigure}%
        \hfill
        \begin{subfigure}[t]{0.33\textwidth}
        	\definecolor{ffqqqq}{rgb}{1.,0.,0.}
\definecolor{qqqqff}{rgb}{0.,0.,1.}
\definecolor{ffqqff}{rgb}{1.,0.,1.}
\definecolor{qqffqq}{rgb}{0.,1.,0.}
\definecolor{lightgggrey}{rgb}{0.7,0.7,0.7}
\resizebox{\textwidth}{!}{
\begin{tikzpicture}[line cap=round,line join=round,>=triangle 45,x=0.25cm,y=0.25cm]

\draw [step=1,gray,line width=0.3pt, opacity=0.5] (-5,0) grid (16,21);

\draw [line width=1.5pt] (0,0)--(0,21);
\draw [line width=1.5pt] (11,0)--(11,21);
\draw [line width=1.5pt] (-5,0)--(16,0);
\draw [line width=1.5pt] (-5,11)--(16,11);

\draw [fill=blue, color=blue, line width=1pt, fill opacity=0.3] (-5,1)--(-5,5)--(-1,5)--(-1,1)--cycle;
\draw [fill=yellow, color=yellow, line width=1pt, fill opacity=0.3] (-5,6)--(-5,10)--(-1,10)--(-1,6)--cycle;
\draw [fill=blue, color=blue, line width=1pt, fill opacity=0.3] (-5,12)--(-5,16)--(-1,16)--(-1,12)--cycle;
\draw [fill=yellow, color=yellow, line width=1pt, fill opacity=0.3] (-5,17)--(-5,21)--(-1,21)--(-1,17)--cycle;

\draw [fill=green, color=green, line width=1pt, fill opacity=0.3] (5,1)--(5,5)--(1,5)--(1,1)--cycle;
%\draw [fill=red, color=red, line width=1pt, fill opacity=0.3] (5,6)--(5,10)--(1,10)--(1,6)--cycle;
%\draw [fill=green, color=green, line width=1pt, fill opacity=0.3] (5,12)--(5,16)--(1,16)--(1,12)--cycle;
\draw [fill=red, color=red, line width=1pt, fill opacity=0.3] (5,17)--(5,21)--(1,21)--(1,17)--cycle;

\draw [fill=blue, color=blue, line width=1pt, fill opacity=0.3] (6,1)--(6,5)--(10,5)--(10,1)--cycle;
%\draw [fill=yellow, color=yellow, line width=1pt, fill opacity=0.3] (6,6)--(6,10)--(10,10)--(10,6)--cycle;
%\draw [fill=blue, color=blue, line width=1pt, fill opacity=0.3] (6,12)--(6,16)--(10,16)--(10,12)--cycle;
\draw [fill=yellow, color=yellow, line width=1pt, fill opacity=0.3] (6,17)--(6,21)--(10,21)--(10,17)--cycle;

\draw [fill=green, color=green, line width=1pt, fill opacity=0.3] (12,1)--(12,5)--(16,5)--(16,1)--cycle;
\draw [fill=red, color=red, line width=1pt, fill opacity=0.3] (12,6)--(12,10)--(16,10)--(16,6)--cycle;
\draw [fill=green, color=green, line width=1pt, fill opacity=0.3] (12,12)--(12,16)--(16,16)--(16,12)--cycle;
\draw [fill=red, color=red, line width=1pt, fill opacity=0.3] (12,17)--(12,21)--(16,21)--(16,17)--cycle;

\begin{scriptsize}
%\foreach \x in {-5,...,16}
%\foreach \y in {0,...,21}
%{
%\draw[fill=gray, line width=0pt] (\x,\y) circle (0.04cm);
%}

\foreach \x in {-5,-1,1,5,6,10,12,16}
{
\draw (\x,-0.78) node[anchor=mid] {$\x$};
}
\foreach \y in {0,1,5,6,10,11,12,16,17,21}
{
\draw (-6.2,\y) node[anchor=mid] {$\y$};
}
\end{scriptsize}

%\draw (3,8) node[anchor=center] {$A_{0,0}$};
%\draw (14,8) node[anchor=center] {$A_{1,0}$};
%\draw (3,19) node[anchor=center] {$A_{0,1}$};
%\draw (14,19) node[anchor=center] {$A_{1,1}$};

%\draw (-3,8) node[anchor=center] {$B_{-1,0}$};
%\draw (8,8) node[anchor=center] {$B_{0,0}$};
%\draw (-3,19) node[anchor=center] {$B_{-1,1}$};
%\draw (8,19) node[anchor=center] {$B_{0,1}$};

%\draw (3,3) node[anchor=center] {$C_{0,0}$};
%\draw (14,3) node[anchor=center] {$C_{1,0}$};
%\draw (3,14) node[anchor=center] {$C_{0,1}$};
%\draw (14,14) node[anchor=center] {$C_{1,1}$};

%\draw (-3,3) node[anchor=center] {$D_{-1,0}$};
%\draw (8,3) node[anchor=center] {$D_{0,0}$};
%\draw (-3,14) node[anchor=center] {$D_{-1,1}$};
%\draw (8,14) node[anchor=center] {$D_{0,1}$};

\end{tikzpicture}
}
        %    \vspace{-0.3cm}
        \end{subfigure}
        \hfill\null
     %   \vspace{0.1cm}
    \caption{The left side illustrates the blocks in \autoref{def:blocks} in the case $p=11$, and the right side shows which of these are kept in the set $T_2$ in \autoref{hallcon}.}
    \label{fig:blocks_hjsw}
    \end{figure}
	
	Recall the following observation from \cite{hall1975some}:
	
	\begin{observation}\label{obs:atmosttwoclasses}
		For any $c\in \F_p^{*}$ and any Euclidean line $\ell$, the intersection points of $H(c,p)$ and $\ell$ lie in at most two classes.
	\end{observation}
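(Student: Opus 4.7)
The plan is to parameterise the integer points of $\ell$ and substitute them into the congruence defining $H(c,p)$, reducing the problem to counting the roots modulo $p$ of a polynomial of degree at most two. The key remark is that two integer points $(x_1,y_1), (x_2,y_2)$ lie in the same class exactly when their difference is in $p\zz\times p\zz$, so translating a parameter by $p$ preserves the class.

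First I would dispatch the axis-parallel cases. If $\ell$ is horizontal with equation $y=y_0$, then every intersection point satisfies $xy_0 \equiv c \pmod p$; since $c\not\equiv 0 \pmod p$, this forces $p\nmid y_0$, hence $x\equiv cy_0^{-1}\pmod p$, and all intersection points share a single class. Vertical lines are treated symmetrically.

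For a generic secant $\ell$, I would fix any integer point $(x_0,y_0)\in\ell$ (if none exists the claim is vacuous) and, using the primitive direction vector $(v_x,v_y)$ with $\gcd(v_x,v_y)=1$, write every integer point of $\ell$ as $(x_0+tv_x, y_0+tv_y)$ with $t\in\zz$. Substituting into $xy\equiv c \pmod p$ yields
$$v_xv_y\,t^2 + (x_0v_y + y_0v_x)\,t + (x_0y_0-c)\equiv 0 \pmod p.$$
Replacing $t$ by $t+p$ shifts the point by $(pv_x,pv_y)$ and leaves the class unchanged, so the number of classes in $\ell\cap H(c,p)$ is at most the number of roots of this congruence in $\ff_p$.

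Finally I would split on the reduction of $(v_x,v_y)$ modulo $p$. Coprimality of $v_x$ and $v_y$ prevents both from vanishing mod $p$. If neither does, the congruence is a genuine quadratic with at most two roots in $\ff_p$. If exactly one does, say $v_x\equiv 0\pmod p$, the equation collapses to $x_0v_y\,t\equiv c-x_0y_0\pmod p$; here $v_y\not\equiv 0$ by coprimality and $x_0\not\equiv 0$ (otherwise $x_0y_0\equiv 0$ and one sees no solutions exist since $c\not\equiv 0$), so there is a unique $t$ modulo $p$. In every case the bound of two classes follows. I expect no conceptual obstacle; the only care needed is the routine bookkeeping in the degenerate subcases $p\mid v_x$ or $p\mid v_y$.
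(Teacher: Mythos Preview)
Your argument is correct and is the standard one: parametrise the integer points of $\ell$ by a single integer $t$ and reduce membership in $H(c,p)$ to a congruence of degree at most two over $\ff_p$, whose solutions modulo $p$ are in bijection with the classes appearing in $\ell\cap H(c,p)$. The paper does not give its own proof of this observation; it is simply recalled from Hall, Jackson, Sudbery and Wild, and the underlying idea there is the same (a line and the conic $xy=c$ over $\ff_p$ intersect in at most two $\ff_p$-points). One cosmetic remark: your case split into axis-parallel lines and generic secants tacitly assumes $\ell$ has rational slope; a line with irrational slope contains at most one integer point, so that case is trivial and could be dismissed in a single clause for completeness.
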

    
	For certain types of lines, Hall et al. \cite{hall1975some} describe their possible intersection patterns with $H(c,p)$ in more detail:
	
	\begin{observations}\label{obs:meet_classes}
		\begin{enumerate}[label=(\alph*)]
			\item A line of slope $0$ or $\infty$ meets $H(c,p)$ in at most one class.
			\item A line of slope $+1$ meets $H(c,p)$ in one of the following sets:
			\begin{itemize}
				\item the empty set,
				\item one class in $A$ or $D$,
				\item two classes, with both in $A$, both in $D$, or one in $B$ and the other in $C$.
			\end{itemize}
		    In the last case, the two classes of the intersection points are $(x,y)$ and $(-y,-x)$ for some $x,y\in \F_p^{*}$.
			\item A line of slope $-1$ meets $H(c,p)$ in one of the following sets:
			\begin{itemize}
				\item the empty set,
				\item one class in $B$ or $C$,
				\item two classes, with both in $B$, both in $C$, or one in $A$ and the other in $D$.
			\end{itemize}
			In the last case, the two classes of the intersection points are $(x,y)$ and $(y,x)$ for some $x,y\in \F_p^{*}$.
	\end{enumerate}
\end{observations}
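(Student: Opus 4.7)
The plan is to reduce each case to a polynomial congruence in $x$ modulo $p$: for part (a) the congruence is linear, and for parts (b) and (c) it is a quadratic, hence has at most two roots. This immediately gives the ``at most two classes'' statement consistent with \autoref{obs:atmosttwoclasses}, and ``at most one'' in part (a). The key ingredients will be Vieta's formulas and the elementary fact that on $\F_p^{*}$, the map $a\mapsto -a$ interchanges the halves $[1,(p-1)/2]$ and $[(p+1)/2,p-1]$.

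For part (a), a horizontal line $y=y_0$ with $p\nmid y_0$ forces $x\equiv c\,y_0^{-1}\pmod p$, so every intersection point lies in the single class of $(cy_0^{-1}\bmod p,\,y_0\bmod p)$; if $p\mid y_0$ there is no intersection. Vertical lines are symmetric. For part (b), substituting $y=x+b$ into $xy\equiv c$ yields $x^{2}+bx-c\equiv 0\pmod p$. Since $-c\not\equiv 0$, both roots $x_1,x_2$ are nonzero, and via $x_1+x_2\equiv -b$ the corresponding values $y_i=x_i+b$ are nonzero as well. Setting $x:=x_1$ and $y:=-x_2$, Vieta identifies the two classes as $(x,y)$ and $(-y,-x)$. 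For part (c), substituting $y=-x+b$ yields $x^{2}-bx+c\equiv 0$, and here $y_i=x_{3-i}$, so with $x:=x_1$ and $y:=x_2$ the two classes are $(x,y)$ and $(y,x)$.

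It then remains to read off block membership, and this is a routine four-case check. For the involution $(x,y)\mapsto(-y,-x)$ of part (b), negating both coordinates toggles each half and the subsequent transposition swaps them: one obtains $A\mapsto A$, $D\mapsto D$, and $B\leftrightarrow C$, exactly matching the claimed pairings, while the repeated-root subcase $x_1=x_2$ forces $y=-x$ and hence a single class in $A\cup D$. For the involution $(x,y)\mapsto(y,x)$ of part (c), the halves of the coordinates are preserved but their roles swapped, giving $B\mapsto B$, $C\mapsto C$, and $A\leftrightarrow D$; repeated roots give $y=x$ and a single class in $B\cup C$. I do not anticipate any real obstacle here; the main risk is a sign slip in the Vieta step, which would corrupt the involutions and thus the block pairings, so that computation must be tracked carefully.
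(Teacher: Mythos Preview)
Your argument is correct. The paper itself does not supply a proof of these observations; it simply recalls them from Hall, Jackson, Sudbery and Wild \cite{hall1975some}, so there is no ``paper's proof'' to compare against. Your approach --- substitute the line equation into $xy\equiv c\pmod p$, obtain a linear or quadratic congruence, apply Vieta's formulas to identify the two root-classes as $(x,y)$ and $(-y,-x)$ (slope $+1$) or $(x,y)$ and $(y,x)$ (slope $-1$), and then read off the block membership from the fact that $a\mapsto -a$ swaps the two halves of $\ff_p^*$ --- is exactly the natural argument and matches the brief remark the paper makes immediately after the statement (that the two base points are reflections in $x+y=p$ or $x=y$). The computations and block checks you outline are all accurate.
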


In the cases when a line of slope $+1$ or $-1$ meets $H(c,p)$ in two classes, the base points of the two classes are reflections of each other in the lines $x+y=p$ or $x=y$, respectively. The proof makes use of the symmetry of the modular hyperbola with respect to switching the coordinates and to multiplying each coordinate by $-1$.

The construction of Hall et al. is a subset of the grid 
\begin{equation}
    \cG=\cG(p):=\left[-\frac{p-1}{2}, \frac{3p-1}{2}\right]\times [0,2p-1],
    \label{eq:G}
\end{equation}
 which has the following key property:

\begin{observation}\label{obs:region_property}
	In $\cG$, no three congruent points lie on the same line, and any pair of congruent points determine a line of slope $0$, $\infty$, or $\pm 1$.
\end{observation}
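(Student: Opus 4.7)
The plan is to exploit the fact that both the $x$-range and the $y$-range of the grid $\cG$ contain exactly $2p$ integers. First I would handle the statement about slopes. If $(x_1,y_1),(x_2,y_2)\in\cG$ are two distinct congruent points, then $x_2-x_1$ and $y_2-y_1$ are multiples of $p$. The width of the $x$-interval $\left[-\tfrac{p-1}{2},\tfrac{3p-1}{2}\right]$ is $2p-1$, and similarly for the $y$-interval $[0,2p-1]$, so each coordinate difference lies in $(-2p,2p)\cap p\zz=\{-p,0,p\}$. Hence the difference vector is $(\alpha p,\beta p)$ for some $(\alpha,\beta)\in\{-1,0,1\}^2\setminus\{(0,0)\}$, and the resulting eight (four up to sign) vectors give exactly slopes $0$, $\infty$, $+1$, and $-1$.

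For the collinearity part, I would use a pigeonhole argument based on the observation that each of the two integer intervals contains $2p$ points and is a union of $p$ residue classes mod $p$, so every class has exactly two representatives in each interval. Suppose three distinct congruent points in $\cG$ were collinear. Their $x$-coordinates all lie in a single residue class mod $p$, hence take at most two distinct values in $\cG$; by pigeonhole, at least two of the three points share an $x$-coordinate. The same reasoning applies to $y$.

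If two of the three points share an $x$-coordinate, then since they are distinct they determine a vertical line, which forces the third point to have that same $x$-coordinate. Then all three $y$-coordinates are distinct, pairwise congruent mod $p$, and lie in $[0,2p-1]$ — contradicting that each residue class has only two representatives there. The symmetric case where two of the points share a $y$-coordinate yields the analogous contradiction in the $x$-interval.

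There is essentially no obstacle: the statement is a clean consequence of the carefully chosen grid dimensions, and the only thing to verify is the residue class counting (each of the two integer intervals contains exactly two representatives of every class mod $p$), which is immediate because each interval has $2p$ integers. I would present both parts together in a few lines after stating the two counting facts.
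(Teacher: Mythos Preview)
Your proof is correct and uses essentially the same underlying observation as the paper: each residue class has exactly two representatives in each coordinate interval, so each class meets $\cG$ in a $2\times 2$ array of points. The paper compresses this into a single sentence by noting that these four points form the vertices of an axis-aligned square of side $p$, from which both claims follow immediately (no three vertices of a square are collinear, and pairs of vertices determine slopes $0$, $\infty$, $\pm1$); your version simply unpacks this counting more explicitly via pigeonhole.
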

\begin{proof}
	The region contains $4$ points of each class, laid out in the vertices of a square, from which the statement follows.
\end{proof}

Now we are ready to recall the construction of Hall et al., and the proof of their main result, giving a no-$(k+1)$-in-line set for $k=2$:

\begin{construction}[HJSW construction \cite{hall1975some}]\label{hallcon} 
	Take the set $T_2=T_2(p)\subseteq \cG(p)$ defined as the union of the following $12$ blocks, three of each type:
	$$T_2 := \big(A_{0,1}\sqcup A_{1,0}\sqcup A_{1,1}\big) \sqcup 
    \big(B_{-1,0}\sqcup B_{-1,1}\sqcup B_{0,1}\big) \sqcup 
    \big(C_{0,0}\sqcup C_{1,0}\sqcup C_{1,1}\big) \sqcup 
    \big(D_{-1,0}\sqcup D_{-1,1}\sqcup D_{0,0}\big).$$
	Fix any value $c\in \F_p^{*}$, and take the modular hyperbola $H(c,p)$. The HJSW construction is the set \[S_2(c,p):=H(c,p)\cap T_2(p).\]
\end{construction}

The set $T_2(p)$ is illustrated in \autoref{fig:blocks_hjsw}.

\begin{theorem}[\cite{hall1975some}]\label{hjsw_th}
For any odd prime $p$ and $c\in \F_p^*$, the set $S_2(c,p)$ defined in 	\autoref{hallcon}  is a no-three-in-line set of size $3(p-1)$, hence
$f_2(2p)\ge 3(p-1)$.
\end{theorem}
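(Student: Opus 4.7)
The plan is to verify the cardinality and the no-three-in-line property separately. For the size, $H(c,p)$ is the union of exactly $p-1$ residue classes modulo $p$ (parametrised by $x\in \F_p^*$ with $y\equiv cx^{-1}\pmod p$), and each such class contributes exactly three points to $T_2$, since by construction $T_2$ contains three blocks of each of the four types $A$, $B$, $C$, $D$. Hence $|S_2(c,p)|=3(p-1)$, and since $\cG(p)$ is contained in a translate of a $2p\times 2p$ grid, this already yields $f_2(2p)\ge 3(p-1)$.

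For the collinearity condition I would argue by contradiction: assume three points of $S_2$ lie on a common line $\ell$. By \autoref{obs:atmosttwoclasses} they occupy at most two residue classes, so two of the three must be congruent; \autoref{obs:region_property} then forces the slope of $\ell$ into $\{0,\infty,+1,-1\}$, which instantly disposes of the generic-slope case.

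If $\ell$ is horizontal or vertical, \autoref{obs:meet_classes}(a) confines $H(c,p)\cap \ell$ to a single residue class. However, for every class the three translates retained in $T_2$ occupy three corners of a $p\times p$ square (for instance, the type $A$ triple is $(x,y+p),(x+p,y),(x+p,y+p)$), so at most two of them share any horizontal or vertical line, contradicting the hypothesis.

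The main case is $\ell$ of slope $\pm 1$. Focussing on slope $+1$, \autoref{obs:meet_classes}(b) restricts the possible two-class configurations to $(A,A)$, $(D,D)$, or $(B,C)$, with the two classes related by $(x,y)\leftrightarrow(p-y,p-x)$. In each subcase I list the three $T_2$-translates of each class (six points total) and compute the invariant $Y-X$; a short check shows that in every subcase each value of $Y-X$ is attained at most twice among the six points, so $\ell$ can contain at most two of them. The slope $-1$ case is handled symmetrically using \autoref{obs:meet_classes}(c), the invariant $Y+X$, and the configurations $(B,B)$, $(C,C)$, $(A,D)$. This yields the required contradiction. The main obstacle is just the bookkeeping of the finitely many $\pm p$-offsets, which is entirely dictated by the specific block translations chosen in \autoref{hallcon}.
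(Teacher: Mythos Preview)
Your argument is correct and structurally matches the paper's: the size count, the reduction via \autoref{obs:atmosttwoclasses} and \autoref{obs:region_property} to slopes in $\{0,\infty,\pm 1\}$, and the axis-parallel case are handled identically. The only point of divergence is the slope $\pm 1$ case. The paper argues more geometrically: it observes that within $T_2$ the only congruent pairs lying on a line of slope $+1$ come from the block pairs $\{B_{-1,0},B_{0,1}\}$ or $\{C_{0,0},C_{1,1}\}$ (in particular, $A$- and $D$-classes have no such pairs at all), and then checks that any such line misses $T_2\cap C$, respectively $T_2\cap B$; by \autoref{obs:meet_classes}(b) the partner class therefore contributes no point to $\ell$. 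Your version instead enumerates all six $T_2$-representatives of the two classes and verifies directly that the invariant $Y\mp X$ repeats at most twice. Both work; the paper's phrasing better explains \emph{why} the specific block translates in \autoref{hallcon} were chosen, while yours is a clean mechanical verification that avoids that insight.
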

\begin{proof}  As $H(c,p)$ altogether contains $p-1$ classes of normal points, and $T_2$ contains 3 points of every such class, $|S_2(c,p)|=3(p-1)$.
	
	We prove that $S_2:=S_2(c,p)\subseteq \cG$ is a no-three-in-line set.
	Suppose that line $\ell$ contains at least $3$ points of $S_2$. By  \autoref{obs:atmosttwoclasses}, $\ell\cap S_2$ contains at least two congruent points, so by  \autoref{obs:region_property}, $\ell$ has slope $0$, $\infty$, or $\pm 1$.
	
	If $\ell$ is horizontal or vertical, then it meets $H(c,p)$ in at most one class. But this is a contradiction, since $T_2$ has no three congruent points on a line.
	
	If $\ell$ has slope $\pm 1$, and it meets $H(c,p)$ in at most one class, then we similarly get a contradiction. So suppose that $\ell\cap H(c,p)$ has points from two classes, and distinguish two cases based on the slope of $\ell$.
	
	\begin{description}[style=nextline,leftmargin=1em]
		\item[Case 1: $\ell$ has slope $+1$.] In $T_2$, the only pairs of congruent points lying on a line of slope $+1$ consist of one point from $B_{-1,0}$ and one from $B_{0,1}$, or one point from $C_{0,0}$ and one from $C_{1,1}$. However, in these cases the line does not intersect $T_2\cap C$, or $T_2\cap B$, respectively, so by  \autoref{obs:meet_classes}(b), the other class in $\ell\cap H(c,p)$ does not contain a point of $T_2$, giving a contradiction.
		
		\item[Case 2: $\ell$ has slope $-1$.] In this case, we get an analogous contradiction using the block pairs $\{D_{-1,1},D_{0,0}\}$ and $\{A_{0,1},A_{1,0}\}$, considering  \autoref{obs:meet_classes}(c).
	\end{description}
	
	So $S_2$ is indeed a no-three-in-line set.
\end{proof}

\subsection{Extensions of the HJSW construction for \texorpdfstring{$k=3$}{k=3} and \texorpdfstring{$k=4$}{k=4}}
\label{sec:HJSW-34}

We now show a way to supplement the HJSW construction $S_2$ with additional points to obtain a no-$4$-in-line set $S_3$ of at least $\frac72(p-1)$ points, and a no-$5$-in-line set $S_4$ of $4(p-1)$ points. Although these sets turn out to be suboptimal, they will be useful building blocks later on.

To define this new construction, we introduce a new subdivision of $\zz\times \zz$, which resembles a chessboard rotated by $45^{\circ}$. Let $\zz\times \zz=E\sqcup F$, where

$$E=\left\{(x,y)\in\Z\times \Z: (p\mid y+x) \text{~or~} (p\mid y-x) \text{~or~} \left(\left\lfloor\frac{y+x}{p}\right\rfloor + \left\lfloor\frac{y-x}{p}\right\rfloor \equiv 0\pmod{2}\right)\right\},$$

$$F=\left\{(x,y)\in\Z\times \Z: (p\nmid y+x) \text{~and~} (p\nmid y-x) \text{~and~} \left(\left\lfloor\frac{y+x}{p}\right\rfloor + \left\lfloor\frac{y-x}{p}\right\rfloor \equiv 1\pmod{2}\right)\right\}.$$

 \autoref{fig:efsubdivision} shows this subdivision, where $E$ is shaded brown (and also includes the boundary lines with $p\mid y+x$ or $p\mid y-x$), while $F$ is shaded light blue. The subdivision is compatible with translations by multiples of $p$ along both axes, so every class lies fully in $E$ or in $F$.

The subdivision has the following symmetry property:
\begin{observation}\label{obs:efproperties}
For any $(x,y)\in \F_p\times \F_p$ with $x,~y,~y+x,~y-x\ne 0,$ the four points obtained by reflecting $(x,y)$ in some subset of the two lines $y+x=0$ and $y-x=0$ are categorized as follows: either $(x,y), (-x,-y)$ are in $E$ and $(y,x), (-y,-x)$ are in $F$, or vice versa. See the dark blue points of  \autoref{fig:efsubdivision} for an example.
\end{observation}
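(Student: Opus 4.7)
The plan is to set up a single parity function that governs membership in $E$ versus $F$, and then show how the two reflections interact with it. Define
\[
\sigma(a,b) := \left\lfloor \tfrac{a+b}{p} \right\rfloor + \left\lfloor \tfrac{b-a}{p} \right\rfloor \pmod 2
\]
for $(a,b)\in \zz^2$ with $p\nmid a+b$ and $p\nmid b-a$. The hypothesis that $x$, $y$, $y+x$, $y-x$ are all nonzero mod $p$ ensures that none of the four points $(x,y)$, $(-x,-y)$, $(y,x)$, $(-y,-x)$ lies on an exceptional line $p\mid a+b$ or $p\mid b-a$. Consequently each of them is in $E$ iff $\sigma=0$ and in $F$ iff $\sigma=1$, so it suffices to control $\sigma$ on these four points.

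Writing $u=a+b$ and $v=b-a$, the central symmetry $(a,b)\mapsto (-a,-b)$ sends $(u,v)\mapsto (-u,-v)$, while the coordinate swap $(a,b)\mapsto (b,a)$ sends $(u,v)\mapsto (u,-v)$. Using the elementary identity $\lfloor -t/p\rfloor = -\lfloor t/p\rfloor - 1$, valid for any integer $t$ with $p\nmid t$, a short computation gives
\[
\sigma(-a,-b) \equiv \sigma(a,b) \pmod 2 \qquad\text{and}\qquad \sigma(b,a) \equiv \sigma(a,b)+1 \pmod 2.
\]
In other words, central symmetry preserves the $E/F$ class while coordinate swap flips it.

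Applying these two identities to $(a,b)=(x,y)$ and to $(a,b)=(y,x)$ respectively yields
$\sigma(x,y)\equiv \sigma(-x,-y)$ and $\sigma(y,x)\equiv \sigma(-y,-x)$, while $\sigma(x,y)\not\equiv \sigma(y,x)$. Hence $\{(x,y),(-x,-y)\}$ and $\{(y,x),(-y,-x)\}$ sit in opposite sets among $\{E,F\}$, which is exactly the claim.

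There is no substantial obstacle. The only care needed is in the sign manipulation of $\lfloor -t/p\rfloor$ for non-integer $t/p$, and in checking that the non-vanishing hypothesis on $x,y,y\pm x$ rules out the boundary cases (where $p\mid a+b$ or $p\mid b-a$) so that the $\sigma$-based characterisation of $E,F$ applies uniformly to all four points.
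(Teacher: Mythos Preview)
Your proof is correct. The paper does not supply a proof for this observation at all (it is stated as self-evident with a reference to the figure), so your argument via the parity function $\sigma$ and the floor identity $\lfloor -t/p\rfloor = -\lfloor t/p\rfloor - 1$ for $p\nmid t$ is a clean way to fill in the details; note that only the conditions $y+x\ne 0$ and $y-x\ne 0$ are actually used to exclude the boundary lines, though of course the full hypothesis implies this.
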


\begin{figure}[htb!]
	\centering
	
	\definecolor{ffqqqq}{rgb}{1.,0.,0.}
\definecolor{qqqqff}{rgb}{0.,0.,1.}
\definecolor{ffqqff}{rgb}{1.,0.,1.}
\definecolor{qqffqq}{rgb}{0.,1.,0.}
\definecolor{lightgggrey}{rgb}{0.7,0.7,0.7}
\definecolor{ecolor}{rgb}{0.8,0.8,0.8}
\definecolor{fcolor}{rgb}{0.3,0.3,0.3}
\resizebox{0.5\textwidth}{!}{
\begin{tikzpicture}[line cap=round,line join=round,>=triangle 45,x=0.25cm,y=0.25cm]
%\clip(-6.4,-1.2) rectangle (16.5,21.5);
%A konstrukció igazából ettől eddig tart: [-5, 16]×[0, 21]
\draw [step=1,gray,line width=0.3pt, opacity=0.5] (-5.5,0) grid (16.5,22);

\fill [SkyBlue, fill opacity=0.2] (5.5,5.5)--(11,11)--(5.5,16.5)--(0,11)--cycle;
\fill [SkyBlue, fill opacity=0.2] (0,0)--(11,0)--(5.5,5.5)--cycle;
\fill [SkyBlue, fill opacity=0.2] (0,22)--(11,22)--(5.5,16.5)--cycle;
\fill [SkyBlue, fill opacity=0.2] (-5.5,5.5)--(-5.5,16.5)--(0,11)--cycle;
\fill [SkyBlue, fill opacity=0.2] (11,11)--(16.5,16.5)--(16.5,5.5)--cycle;
\fill [SkyBlue, fill opacity=0.2] (11,0)--(16.5,0)--(16.5,5.5)--cycle;
\fill [SkyBlue, fill opacity=0.2] (11,22)--(16.5,22)--(16.5,16.5)--cycle;
\fill [SkyBlue, fill opacity=0.2] (-5.5,22)--(0,22)--(-5.5,16.5)--cycle;
\fill [SkyBlue, fill opacity=0.2] (-5.5,0)--(0,0)--(-5.5,5.5)--cycle;

\draw [line width=1.5pt] (0,0)--(0,22);
\draw [line width=1.5pt] (11,0)--(11,22);
\draw [line width=1.5pt] (-5.5,0)--(16.5,0);
\draw [line width=1.5pt] (-5.5,11)--(16.5,11);
\draw [line width=1.5pt] (-5.5,22)--(16.5,22);

\draw [line width=0.9pt] (0,0)--(16.5,16.5);
\draw [line width=0.9pt] (11,0)--(16.5,5.5);
\draw [line width=0.9pt] (-5.5,5.5)--(11,22);
\draw [line width=0.9pt] (-5.5,16.5)--(0,22);

\draw [line width=0.9pt] (-5.5,16.5)--(11,0);
\draw [line width=0.9pt] (0,22)--(16.5,5.5);
\draw [line width=0.9pt] (11,22)--(16.5,16.5);
\draw [line width=0.9pt] (-5.5,5.5)--(0,0);

\draw [line width=0.9pt] (-5.5,16.5)--(0,22);

\draw [fill=brown, color=brown, line width=1pt, fill opacity=0.3] (0,0)--(5.5,5.5)--(0,11)--(-5.5,5.5)--cycle;
\draw [fill=brown, color=brown, line width=1pt, fill opacity=0.3] (11,0)--(16.5,5.5)--(11,11)--(5.5,5.5)--cycle;
\draw [fill=brown, color=brown, line width=1pt, fill opacity=0.3] (0,11)--(5.5,16.5)--(0,22)--(-5.5,16.5)--cycle;
\draw [fill=brown, color=brown, line width=1pt, fill opacity=0.3] (11,11)--(16.5,16.5)--(11,22)--(5.5,16.5)--cycle;

\draw [fill=blue, line width=0pt] (1,8) circle (0.07cm);
\draw [fill=blue, line width=0pt] (8,1) circle (0.07cm);
\draw [fill=blue, line width=0pt] (3,10) circle (0.07cm);
\draw [fill=blue, line width=0pt] (10,3) circle (0.07cm);

\draw [fill=blue, line width=0pt] (12,8) circle (0.07cm);
\draw [fill=blue, line width=0pt] (14,10) circle (0.07cm);

\draw [fill=blue, line width=0pt] (-3,1) circle (0.07cm);
\draw [fill=blue, line width=0pt] (-1,3) circle (0.07cm);

\draw [fill=blue, line width=0pt] (1,19) circle (0.07cm);
\draw [fill=blue, line width=0pt] (8,12) circle (0.07cm);
\draw [fill=blue, line width=0pt] (3,21) circle (0.07cm);
\draw [fill=blue, line width=0pt] (10,14) circle (0.07cm);

\draw [fill=blue, line width=0pt] (12,19) circle (0.07cm);
\draw [fill=blue, line width=0pt] (14,21) circle (0.07cm);

\draw [fill=blue, line width=0pt] (-3,12) circle (0.07cm);
\draw [fill=blue, line width=0pt] (-1,14) circle (0.07cm);

\begin{scriptsize}
\draw (0,-0.78) node[anchor=mid] {$0$};
\draw (11,-0.78) node[anchor=mid] {$p$};

\draw (-6.5,0) node[anchor=mid] {$0$};
\draw (-6.5,11) node[anchor=mid] {$p$};
\draw (-6.5,22) node[anchor=mid] {$2p$};
\end{scriptsize}
\end{tikzpicture}
}
    
    \vspace{-0.4cm}
    \caption{The sets $E$ (brown) and $F$ (light blue). The points of a set of four classes illustrating  \autoref{obs:efproperties} are marked as dark blue.}
    \label{fig:efsubdivision}
\end{figure}

Recall the HJSW construction $S_2=H(c,p)\cap T_2$ for a modular hyperbola $H(c,p)$, which was no-$3$-in-line. We are now ready to define our modified constructions $S_k$ for $k\in \{3,4\}$, which will also be no-$(k+1)$-in-line.

In the grid $\cG$, the normal points form $16$ blocks, with $12$ of these forming the set $T_2$. We let $M$ be the union of the four remaining blocks in the middle: $M=A_{0,0}\sqcup B_{0,0}\sqcup C_{0,1}\sqcup D_{0,1}$.

\begin{construction}[Modular hyperbola construction in the grid]\label{const:S3S4}
	In the grid $\cG$ from \eqref{eq:G}, let $T_3\leteq T_2 \sqcup \left(M\cap E\right)$ and $T_4\leteq T_2 \sqcup M$. 
    For any $c\in \F_p^{*}$, define  
    \begin{align*}
        S_3&\leteq S_3(c,p)\leteq H(c,p)\cap T_3,\\
        S\leteq S(c,p)\leteq S_4&\leteq S_4(c,p)\leteq H(c,p)\cap T_4, 
	\end{align*}
 where the modular hyperbola $H(c,p)\subset \Z^2$ is given in \autoref{def:modularHperbola}.
\end{construction}

Note that the subscripts $2$, $3$ and $4$ indicate the maximal intersection sizes of the respective sets with respect to lines. In the calculations of \autoref{sec:constructionGeneral} and \autoref{sec:E}, we will use $S$ instead of $S_4$ for simplicity.

In  \autoref{fig:t4t3t2}, the sets $T_2\subseteq T_3\subseteq T_4$ are depicted simultaneously.

\begin{figure}[htb!]
\centering
    \begin{subfigure}[t]{0.33\textwidth}
    	\definecolor{ffqqqq}{rgb}{1.,0.,0.}
\definecolor{qqqqff}{rgb}{0.,0.,1.}
\definecolor{ffqqff}{rgb}{1.,0.,1.}
\definecolor{qqffqq}{rgb}{0.,1.,0.}
\definecolor{lightgggrey}{rgb}{0.7,0.7,0.7}
\resizebox{\textwidth}{!}{
\begin{tikzpicture}[line cap=round,line join=round,>=triangle 45,x=0.25cm,y=0.25cm]
\draw [step=1,gray,line width=0.3pt, opacity=0.5] (-5,0) grid (16,21);

\draw [line width=1.5pt] (0,0)--(0,21);
\draw [line width=1.5pt] (11,0)--(11,21);
\draw [line width=1.5pt] (-5,0)--(16,0);
\draw [line width=1.5pt] (-5,11)--(16,11);

\draw [fill=blue, color=blue, line width=1pt, fill opacity=0.3] (-5,1)--(-5,5)--(-1,5)--(-1,1)--cycle;
\draw [fill=yellow, color=yellow, line width=1pt, fill opacity=0.3] (-5,6)--(-5,10)--(-1,10)--(-1,6)--cycle;
\draw [fill=blue, color=blue, line width=1pt, fill opacity=0.3] (-5,12)--(-5,16)--(-1,16)--(-1,12)--cycle;
\draw [fill=yellow, color=yellow, line width=1pt, fill opacity=0.3] (-5,17)--(-5,21)--(-1,21)--(-1,17)--cycle;

\draw [fill=green, color=green, line width=1pt, fill opacity=0.3] (5,1)--(5,5)--(1,5)--(1,1)--cycle;
\draw [fill=red, color=red, line width=1pt, fill opacity=0.3] (5,6)--(5,10)--(1,10)--(1,6)--cycle;
\draw [fill=green, color=green, line width=1pt, fill opacity=0.3] (5,12)--(5,16)--(1,16)--(1,12)--cycle;
\draw [fill=red, color=red, line width=1pt, fill opacity=0.3] (5,17)--(5,21)--(1,21)--(1,17)--cycle;

\draw [fill=blue, color=blue, line width=1pt, fill opacity=0.3] (6,1)--(6,5)--(10,5)--(10,1)--cycle;
\draw [fill=yellow, color=yellow, line width=1pt, fill opacity=0.3] (6,6)--(6,10)--(10,10)--(10,6)--cycle;
\draw [fill=blue, color=blue, line width=1pt, fill opacity=0.3] (6,12)--(6,16)--(10,16)--(10,12)--cycle;
\draw [fill=yellow, color=yellow, line width=1pt, fill opacity=0.3] (6,17)--(6,21)--(10,21)--(10,17)--cycle;

\draw [fill=green, color=green, line width=1pt, fill opacity=0.3] (12,1)--(12,5)--(16,5)--(16,1)--cycle;
\draw [fill=red, color=red, line width=1pt, fill opacity=0.3] (12,6)--(12,10)--(16,10)--(16,6)--cycle;
\draw [fill=green, color=green, line width=1pt, fill opacity=0.3] (12,12)--(12,16)--(16,16)--(16,12)--cycle;
\draw [fill=red, color=red, line width=1pt, fill opacity=0.3] (12,17)--(12,21)--(16,21)--(16,17)--cycle;

\begin{scriptsize}
%\foreach \x in {-5,...,16}
%\foreach \y in {0,...,21}
%{
%\draw[fill=gray, line width=0pt] (\x,\y) circle (0.04cm);
%}

\foreach \x in {-5,-1,1,5,6,10,12,16}
{
\draw (\x,-0.78) node[anchor=mid] {$\x$};
}
\foreach \y in {0,1,5,6,10,11,12,16,17,21}
{
\draw (-6.2,\y) node[anchor=mid] {$\y$};
}
\end{scriptsize}

\draw (3,8) node[anchor=center] {$A_{0,0}$};
\draw (14,8) node[anchor=center] {$A_{1,0}$};
\draw (3,19) node[anchor=center] {$A_{0,1}$};
\draw (14,19) node[anchor=center] {$A_{1,1}$};

\draw (-3,8) node[anchor=center] {$B_{-1,0}$};
\draw (8,8) node[anchor=center] {$B_{0,0}$};
\draw (-3,19) node[anchor=center] {$B_{-1,1}$};
\draw (8,19) node[anchor=center] {$B_{0,1}$};

\draw (3,3) node[anchor=center] {$C_{0,0}$};
\draw (14,3) node[anchor=center] {$C_{1,0}$};
\draw (3,14) node[anchor=center] {$C_{0,1}$};
\draw (14,14) node[anchor=center] {$C_{1,1}$};

\draw (-3,3) node[anchor=center] {$D_{-1,0}$};
\draw (8,3) node[anchor=center] {$D_{0,0}$};
\draw (-3,14) node[anchor=center] {$D_{-1,1}$};
\draw (8,14) node[anchor=center] {$D_{0,1}$};

\end{tikzpicture}
}
  %  \vspace{-0.3cm}
        \caption*{$T_4$}
    \end{subfigure}%
    %\hfill
    \begin{subfigure}[t]{0.33\textwidth}
    	\definecolor{ffqqqq}{rgb}{1.,0.,0.}
\definecolor{qqqqff}{rgb}{0.,0.,1.}
\definecolor{ffqqff}{rgb}{1.,0.,1.}
\definecolor{qqffqq}{rgb}{0.,1.,0.}
\definecolor{lightgggrey}{rgb}{0.7,0.7,0.7}
\resizebox{\textwidth}{!}{
\begin{tikzpicture}[line cap=round,line join=round,>=triangle 45,x=0.25cm,y=0.25cm]

\draw [step=1,gray,line width=0.3pt, opacity=0.5] (-5,0) grid (16,21);

\draw [line width=1.5pt] (0,0)--(0,21);
\draw [line width=1.5pt] (11,0)--(11,21);
\draw [line width=1.5pt] (-5,0)--(16,0);
\draw [line width=1.5pt] (-5,11)--(16,11);

\draw [fill=blue, color=blue, line width=1pt, fill opacity=0.3] (-5,1)--(-5,5)--(-1,5)--(-1,1)--cycle;
\draw [fill=yellow, color=yellow, line width=1pt, fill opacity=0.3] (-5,6)--(-5,10)--(-1,10)--(-1,6)--cycle;
\draw [fill=blue, color=blue, line width=1pt, fill opacity=0.3] (-5,12)--(-5,16)--(-1,16)--(-1,12)--cycle;
\draw [fill=yellow, color=yellow, line width=1pt, fill opacity=0.3] (-5,17)--(-5,21)--(-1,21)--(-1,17)--cycle;

\draw [fill=green, color=green, line width=1pt, fill opacity=0.3] (5,1)--(5,5)--(1,5)--(1,1)--cycle;
\draw [fill=red, color=red, line width=1pt, fill opacity=0.3] (5,6)--(1,10)--(1,6)--cycle;
\draw [fill=green, color=green, line width=1pt, fill opacity=0.3] (5,16)--(1,16)--(1,12)--cycle;
\draw [fill=red, color=red, line width=1pt, fill opacity=0.3] (5,17)--(5,21)--(1,21)--(1,17)--cycle;

\draw [fill=blue, color=blue, line width=1pt, fill opacity=0.3] (6,1)--(6,5)--(10,5)--(10,1)--cycle;
\draw [fill=yellow, color=yellow, line width=1pt, fill opacity=0.3] (6,6)--(10,10)--(10,6)--cycle;
\draw [fill=blue, color=blue, line width=1pt, fill opacity=0.3] (6,16)--(10,16)--(10,12)--cycle;
\draw [fill=yellow, color=yellow, line width=1pt, fill opacity=0.3] (6,17)--(6,21)--(10,21)--(10,17)--cycle;

\draw [fill=green, color=green, line width=1pt, fill opacity=0.3] (12,1)--(12,5)--(16,5)--(16,1)--cycle;
\draw [fill=red, color=red, line width=1pt, fill opacity=0.3] (12,6)--(12,10)--(16,10)--(16,6)--cycle;
\draw [fill=green, color=green, line width=1pt, fill opacity=0.3] (12,12)--(12,16)--(16,16)--(16,12)--cycle;
\draw [fill=red, color=red, line width=1pt, fill opacity=0.3] (12,17)--(12,21)--(16,21)--(16,17)--cycle;

\begin{scriptsize}
%\foreach \x in {-5,...,16}
%\foreach \y in {0,...,21}
%{
%\draw[fill=gray, line width=0pt] (\x,\y) circle (0.04cm);
%}

\foreach \x in {-5,-1,1,5,6,10,12,16}
{
\draw (\x,-0.78) node[anchor=mid] {$\x$};
}
\foreach \y in {0,1,5,6,10,11,12,16,17,21}
{
\draw (-6.2,\y) node[anchor=mid] {$\y$};
}
\end{scriptsize}

%\draw (3,8) node[anchor=center] {$A_{0,0}$};
%\draw (14,8) node[anchor=center] {$A_{1,0}$};
%\draw (3,19) node[anchor=center] {$A_{0,1}$};
%\draw (14,19) node[anchor=center] {$A_{1,1}$};

%\draw (-3,8) node[anchor=center] {$B_{-1,0}$};
%\draw (8,8) node[anchor=center] {$B_{0,0}$};
%\draw (-3,19) node[anchor=center] {$B_{-1,1}$};
%\draw (8,19) node[anchor=center] {$B_{0,1}$};

%\draw (3,3) node[anchor=center] {$C_{0,0}$};
%\draw (14,3) node[anchor=center] {$C_{1,0}$};
%\draw (3,14) node[anchor=center] {$C_{0,1}$};
%\draw (14,14) node[anchor=center] {$C_{1,1}$};

%\draw (-3,3) node[anchor=center] {$D_{-1,0}$};
%\draw (8,3) node[anchor=center] {$D_{0,0}$};
%\draw (-3,14) node[anchor=center] {$D_{-1,1}$};
%\draw (8,14) node[anchor=center] {$D_{0,1}$};

\end{tikzpicture}
}
     %   \vspace{-0.3cm}
        \caption*{$T_3$}
    \end{subfigure}%
    %\hfill
    \begin{subfigure}[t]{0.33\textwidth}
    	\definecolor{ffqqqq}{rgb}{1.,0.,0.}
\definecolor{qqqqff}{rgb}{0.,0.,1.}
\definecolor{ffqqff}{rgb}{1.,0.,1.}
\definecolor{qqffqq}{rgb}{0.,1.,0.}
\definecolor{lightgggrey}{rgb}{0.7,0.7,0.7}
\resizebox{\textwidth}{!}{
\begin{tikzpicture}[line cap=round,line join=round,>=triangle 45,x=0.25cm,y=0.25cm]

\draw [step=1,gray,line width=0.3pt, opacity=0.5] (-5,0) grid (16,21);

\draw [line width=1.5pt] (0,0)--(0,21);
\draw [line width=1.5pt] (11,0)--(11,21);
\draw [line width=1.5pt] (-5,0)--(16,0);
\draw [line width=1.5pt] (-5,11)--(16,11);

\draw [fill=blue, color=blue, line width=1pt, fill opacity=0.3] (-5,1)--(-5,5)--(-1,5)--(-1,1)--cycle;
\draw [fill=yellow, color=yellow, line width=1pt, fill opacity=0.3] (-5,6)--(-5,10)--(-1,10)--(-1,6)--cycle;
\draw [fill=blue, color=blue, line width=1pt, fill opacity=0.3] (-5,12)--(-5,16)--(-1,16)--(-1,12)--cycle;
\draw [fill=yellow, color=yellow, line width=1pt, fill opacity=0.3] (-5,17)--(-5,21)--(-1,21)--(-1,17)--cycle;

\draw [fill=green, color=green, line width=1pt, fill opacity=0.3] (5,1)--(5,5)--(1,5)--(1,1)--cycle;
%\draw [fill=red, color=red, line width=1pt, fill opacity=0.3] (5,6)--(5,10)--(1,10)--(1,6)--cycle;
%\draw [fill=green, color=green, line width=1pt, fill opacity=0.3] (5,12)--(5,16)--(1,16)--(1,12)--cycle;
\draw [fill=red, color=red, line width=1pt, fill opacity=0.3] (5,17)--(5,21)--(1,21)--(1,17)--cycle;

\draw [fill=blue, color=blue, line width=1pt, fill opacity=0.3] (6,1)--(6,5)--(10,5)--(10,1)--cycle;
%\draw [fill=yellow, color=yellow, line width=1pt, fill opacity=0.3] (6,6)--(6,10)--(10,10)--(10,6)--cycle;
%\draw [fill=blue, color=blue, line width=1pt, fill opacity=0.3] (6,12)--(6,16)--(10,16)--(10,12)--cycle;
\draw [fill=yellow, color=yellow, line width=1pt, fill opacity=0.3] (6,17)--(6,21)--(10,21)--(10,17)--cycle;

\draw [fill=green, color=green, line width=1pt, fill opacity=0.3] (12,1)--(12,5)--(16,5)--(16,1)--cycle;
\draw [fill=red, color=red, line width=1pt, fill opacity=0.3] (12,6)--(12,10)--(16,10)--(16,6)--cycle;
\draw [fill=green, color=green, line width=1pt, fill opacity=0.3] (12,12)--(12,16)--(16,16)--(16,12)--cycle;
\draw [fill=red, color=red, line width=1pt, fill opacity=0.3] (12,17)--(12,21)--(16,21)--(16,17)--cycle;

\begin{scriptsize}
%\foreach \x in {-5,...,16}
%\foreach \y in {0,...,21}
%{
%\draw[fill=gray, line width=0pt] (\x,\y) circle (0.04cm);
%}

\foreach \x in {-5,-1,1,5,6,10,12,16}
{
\draw (\x,-0.78) node[anchor=mid] {$\x$};
}
\foreach \y in {0,1,5,6,10,11,12,16,17,21}
{
\draw (-6.2,\y) node[anchor=mid] {$\y$};
}
\end{scriptsize}

%\draw (3,8) node[anchor=center] {$A_{0,0}$};
%\draw (14,8) node[anchor=center] {$A_{1,0}$};
%\draw (3,19) node[anchor=center] {$A_{0,1}$};
%\draw (14,19) node[anchor=center] {$A_{1,1}$};

%\draw (-3,8) node[anchor=center] {$B_{-1,0}$};
%\draw (8,8) node[anchor=center] {$B_{0,0}$};
%\draw (-3,19) node[anchor=center] {$B_{-1,1}$};
%\draw (8,19) node[anchor=center] {$B_{0,1}$};

%\draw (3,3) node[anchor=center] {$C_{0,0}$};
%\draw (14,3) node[anchor=center] {$C_{1,0}$};
%\draw (3,14) node[anchor=center] {$C_{0,1}$};
%\draw (14,14) node[anchor=center] {$C_{1,1}$};

%\draw (-3,3) node[anchor=center] {$D_{-1,0}$};
%\draw (8,3) node[anchor=center] {$D_{0,0}$};
%\draw (-3,14) node[anchor=center] {$D_{-1,1}$};
%\draw (8,14) node[anchor=center] {$D_{0,1}$};

\end{tikzpicture}
}
    %    \vspace{-0.3cm}
        \caption*{$T_2$}
    \end{subfigure}
 %   \vspace{0.1cm}
\caption{The subsets $T_4$, $T_3$ and $T_2$ of $\cG$ in the case $p=11$, illustrated as the union of the four coloured subsets in each figure. The colours red, yellow, green and blue indicate the points of $A$, $B$, $C$ and $D$, respectively.}
\label{fig:t4t3t2}
\end{figure}

\begin{proposition}\label{thm:pversion_34}
	$f_3(2p)\ge |S_3|\ge 3.5(p-1)$ and $f_4(2p)\ge |S_4|= 4(p-1)$ for every prime $p\ge 3$.
\end{proposition}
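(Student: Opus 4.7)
My plan is to mirror the structure of the proof of \autoref{hjsw_th}, treating $S_4$ and $S_3$ in turn.

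For $S_4$: each class of $H(c,p)$ has exactly four representatives in $\cG$, arranged as the vertices of a square by \autoref{obs:region_property}. Since $T_4$ is the union of all sixteen normal blocks, all four representatives of every class lie in $T_4$, yielding $|S_4| = 4(p-1)$. The no-$5$-in-line property is immediate: any line meets $H(c,p)$ in at most two classes (\autoref{obs:atmosttwoclasses}), and any line hits at most two vertices of a square, so $|\ell \cap S_4| \le 4$.

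For $|S_3|$: I write $|S_3| = 3(p-1) + n_E$ with $n_E := |H(c,p) \cap M \cap E|$. Since vertical translation by $p$ preserves both the divisibilities $p \mid y \pm x$ and the parity of $\lfloor (y-x)/p \rfloor + \lfloor (y+x)/p \rfloor$, a class's $M$-representative lies in $E$ iff its base point in $[0, p-1]^2$ does. The $p-1$ classes of $H(c,p)$ partition into orbits under the group generated by $(x,y)\mapsto(-x,-y)$ and $(x,y)\mapsto(y,x)$; a generic orbit has size four and contributes exactly two members to $E$ by \autoref{obs:efproperties}, while the exceptional size-two orbits (from $x^2=c$ or $x^2=-c$, giving classes with $y=x$ or $y=-x$ in $\F_p$) lie on the boundary where $p\mid y\pm x$ and therefore belong to $E$ by definition. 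Summing over orbits gives $n_E \ge (p-1)/2$, hence $|S_3| \ge 3.5(p-1)$.

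The bulk of the work is showing that $S_3$ is no-$4$-in-line. Suppose $\ell$ contains four points of $S_3$. By \autoref{obs:atmosttwoclasses} and \autoref{obs:region_property}, $\ell$ meets $H(c,p)$ in exactly two classes, each contributing a congruent pair along a slope-$\pm1$ diagonal of its rep-square; hence $\ell$ has slope $\pm 1$ (horizontal and vertical lines give at most one class by \autoref{obs:meet_classes}(a), and so at most two points). Applying \autoref{obs:meet_classes}(b,c), the two classes are either of mixed type ($B+C$ for slope $+1$; $A+D$ for slope $-1$) or of same type (both in one of $A,B,C,D$). In the mixed case, the slope-$\pm 1$ diagonal lines of the two classes are parallel but offset by $p$ in the $y\mp x$ value, so $\ell$ can coincide with at most one of them; the other class then contributes only a single representative, lying in one of the blocks $A_{0,0}$, $B_{0,0}$, $C_{0,1}$, $D_{0,1}$ of $M$ outside $T_2$, yielding $|\ell\cap S_3|\le 2+1=3$, a contradiction. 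In the same-type case, the two paired classes share a common diagonal (as the involutions $(x,y)\leftrightarrow(p-y,p-x)$ and $(x,y)\leftrightarrow(y,x)$ preserve $y-x$ and $y+x$, respectively); each class contributes its $T_2$-half of the diagonal plus its $M$-half iff the latter lies in $E$. Applying the quadrant-wise $E$-conditions ($x+y\le p$ on $A_{0,0}$, $y\le x$ on $B_{0,0}$, $y\ge x$ on $C_{0,0}$, $x+y\ge p$ on $D_{0,0}$) to the two paired classes simultaneously forces $x+y=p$ (in $A+A$ or $D+D$) or $x=y$ (in $B+B$ or $C+C$), which identifies the two classes into one, a contradiction.

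The main obstacle lies in the same-type case, where the four quadrant-specific $E$-conditions must be combined with the pairing involutions to conclude that their simultaneous satisfaction forces the configuration onto a boundary equality that collapses the two classes; a secondary but routine check is that in the mixed case the $M$-rep of the partner class indeed lies in the single block picked out by the offset calculation, so no additional representative slips onto $\ell$.
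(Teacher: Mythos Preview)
Your proof is correct. For $S_4$ and for the cardinality bound $|S_3|\ge 3.5(p-1)$ you do essentially what the paper does, but for the no-$4$-in-line property of $S_3$ you take a genuinely different route.

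The paper decomposes $S_3=S_2\sqcup\bigl(H(c,p)\cap M\cap E\bigr)$ and shows separately that $|\ell\cap S_2|\le 2$ (reusing the HJSW theorem) and $|\ell\cap M\cap E\cap H(c,p)|\le 1$; the latter is a short argument that two noncongruent $M$-points on a slope-$\pm1$ line cannot both lie in $E$. You instead assume four collinear points of $S_3$ and analyse the configuration directly, splitting into the mixed-type case (where the two class diagonals are offset by $p$ so cannot both equal $\ell$) and the same-type case (where they coincide, one endpoint from each class lands in $M$, and the pair of quadrant-wise $E$-inequalities can only be satisfied simultaneously on the degenerate boundary). The underlying geometric fact---that the involution pairing sends one $M$-representative to $E$ and the other to $F$---is the same in both proofs; the paper's decomposition is more modular and lets the HJSW bound do half the work, while your direct analysis is self-contained but requires checking block memberships in each sub-case.

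One notational wrinkle: you list the $E$-conditions ``on $C_{0,0}$'' and ``on $D_{0,0}$'', whereas the $M$-blocks for $C$ and $D$ are $C_{0,1}$ and $D_{0,1}$. Since you earlier observed that vertical translation by $p$ preserves $E$-membership, these are the correct conditions phrased at the base-point level, so the argument stands; but it would be cleaner to say so explicitly.
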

\begin{proof}
	We prove that $S_k$ contains at most $k$ points on every line $\ell$ for $k\in \{3,4\}$. Clearly, $S_2\subseteq S_3\subseteq S_4$.
 
	Take any line $\ell$; the points of $\ell\cap S_k$ lie in at most two classes. By  \autoref{obs:region_property}, if any two points in $\cG$ are congruent then they determine a line of slope $0$, $\infty$ or $\pm 1$, and on this line there cannot be any more points of $\cG$ from the same class. So $|\ell\cap S_4|\le 4$, and if $\ell$ is of modulus $\ge 2$, then $|\ell\cap S_4|\le 2$.
 
	Horizontal and vertical lines only meet one class of $H(c,p)$, so they meet $S_4$ in at most $2$ points. So it remains to show that $|\ell\cap S_3|\le 3$ for $\ell$ of slope $\pm 1$.
	
	We already know that $|\ell\cap S_2|\le 2$, and now we show that $|\ell \cap (S_3\setminus S_2)|=|\ell\cap (M\cap E)\cap H(c,p)|\le 1$. Supposing the contrary, $\ell$ must meet $(M\cap E)\cap H(c,p)$ in two noncongruent points, as no two points of $M$ are congruent.
	
	If $\ell$ has slope $+1$, then considering the possibilities in  \autoref{obs:meet_classes}(b) for the two classes of $\ell\cap H(c,p)$, we cannot have one class in $B$ and the other in $C$, as such points are not on the same slope $+1$ line in $M$. However we also cannot have two classes in $A$ (or two in $D$), since the corresponding points would be reflections of each other in $x+y=p$ (or $x+y=2p$), but one point from every such pair in $A_{0,0}$ and $D_{0,1}$ is missing from $E$.
	
	If $\ell$ has slope $-1$, we get an analogous contradiction, giving that $S_3$ is indeed no-$4$-in-line.
	
	Now as $H(c,p)$ consists of $p-1$ full normal classes, and $T_4$ contains $4$ points of every normal class, $|S_4|=4(p-1)$. The set $T_3$ consists of $T_2$ (having $3$ points of each normal class), and $M\cap E$ (where $M$ has $1$ point of each normal class). By  \autoref{obs:efproperties}, normal classes $(x,y)$ that do not lie on the diagonals $y\pm x\equiv 0 \pmod p$ can be subdivided into groups of two classes in $E$ and two in $F$, with the value of $xy$ being invariant modulo $p$ within each group.
	
	Altogether $p-1$ classes satisfy $xy\equiv c \pmod p$, and on the diagonals $y-x\equiv 0 \pmod p$ and $y+x\equiv 0 \pmod p$, there are at most two such classes each. Half of the remaining classes are in $E$, thus $|S_3\setminus S_2|\ge \frac{p-1}{2}$  and hence $|S_3|\ge 3.5(p-1)$ holds.
\end{proof}
\section{Randomised algebraic construction for every \texorpdfstring{$k$}{k}}\label{sec:constructionGeneral}

In this section, we prove \autoref{k_asymp} by giving a random point set of the grid, for which we select a set of random modular hyperbolae, and then delete some points from the union of their point sets lying in an $n\times n$ grid in order to obtain our no-$(k+1)$-in-line set.

\begin{definition}\label{def:G-Omega-L}
    Let $p$ be an odd prime. 
    In the $2p\times 2p$ grid $\cG=\cG(p)\leteq \left[-\frac{p-1}{2}, \frac{3p-1}{2}\right] \times [0,2p-1]$, 
    consider the $2p\times (2p-1)$ subgrid $\cG'\leteq \left[-\frac{p-1}{2}, \frac{3p-1}{2}\right] \times [1,2p-1]$ which we get from $\cG$ by removing the bottom row. 
    Let $\cL'$ be the set of lines of slope $\pm 1$ intersecting $\cG'$.
    Define $\Omega_p$ to be the set of subsets of $\{1,\dots,p-1\}$.
   
\end{definition}

In the following, we build on \autoref{const:S3S4}. In particular, recall that $S(c,p)$ is the intersection of the modular hyperbola $H(c,p)$ with the grid $\cG(p)$.

\begin{construction}[Randomised construction using multiple modular hyperbolae]\label{con:randomConstruction} 
    Let $W\in \Omega_p$ and $k\in\N$ with $k\geq 2\Cardinality{W}$. 
    Let $\cS(W)\leteq \bigcup_{c\in W} S(c,p) \subseteq \cG'$, 
    and pick a subset $\cR\subseteq \cS(W)$ of minimum size such that  
    $|\ell\cap \cR|\geq |\ell\cap \cS(W)|-k$ 
    for every line $\ell\in\cL'$. 
    Our construction is   $\cP_k(W)\leteq \cS(W)\setminus\cR$.
\end{construction}

The remaining part of the section aims to determine the size of the construction above.

\begin{lemma}[Construction size]
\label{lem:construction} The trimmed point set
    $\cP_k(W)\subseteq \cG$ from \autoref{con:randomConstruction} is a no-$(k+1)$-in-line set of size 
    \begin{equation}
        \Cardinality{\cP_k(W)}\geq 4(p-1)\Cardinality{W}-X_{k,p}(W),
        \label{eq:constructionSize}
    \end{equation}
    where 
    $X_{k,p}(W)\leteq \sum_{\ell\in\cL'} \max\{0,\Cardinality{\ell\cap \cS(W)}-k\}$.
\end{lemma}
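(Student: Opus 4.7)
The proof naturally splits into two pieces: the size bound \eqref{eq:constructionSize} and the no-$(k+1)$-in-line property. I would begin with the size. Since two distinct nonzero residues $c_1,c_2\in\ff_p^{*}$ yield disjoint modular hyperbolae $H(c_1,p)$ and $H(c_2,p)$ (each integer point has a unique product $xy\bmod p$), the union $\cS(W)=\bigsqcup_{c\in W}S_4(c,p)$ is disjoint and \autoref{thm:pversion_34} gives $|\cS(W)|=4(p-1)|W|$. To bound $|\cR|$ without describing the minimiser, I would exhibit any feasible deletion set $\cR'$: for each $\ell\in\cL'$ with $|\ell\cap\cS(W)|>k$, pick arbitrarily $|\ell\cap\cS(W)|-k$ points of $\ell\cap\cS(W)$ and take the union over $\ell$. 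Then $\cR'$ is feasible by construction, and $|\cR'|\leq X_{k,p}(W)$ (points shared between several $\ell$ contribute once on the left but are over-counted on the right). Minimality forces $|\cR|\leq|\cR'|\leq X_{k,p}(W)$, and subtracting from $|\cS(W)|$ delivers the stated lower bound.

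Next, I verify that $\cP_k(W)$ meets every Euclidean line in at most $k$ points. Since $\cP_k(W)\subseteq\cG'$, only lines meeting $\cG'$ matter, and I would split by slope. If $\ell$ has slope $\pm 1$ and meets $\cG'$, then $\ell\in\cL'$, so by construction $|\ell\cap\cR|\geq|\ell\cap\cS(W)|-k$, giving $|\ell\cap\cP_k(W)|=|\ell\cap\cS(W)|-|\ell\cap\cR|\leq k$. If $\ell$ has any other slope, I would first establish the per-hyperbola bound $|\ell\cap S_4(c,p)|\leq 2$ for every $c\in W$, and then invoke the hypothesis $k\geq 2|W|$ to conclude $|\ell\cap\cP_k(W)|\leq|\ell\cap\cS(W)|\leq 2|W|\leq k$.

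The substance of the second case is the per-hyperbola bound, which I would assemble from the preliminaries. By \autoref{obs:atmosttwoclasses}, $\ell\cap H(c,p)$ lies in at most two congruence classes; for lines of slope $0$ or $\infty$, \autoref{obs:meet_classes}(a) sharpens this to one. Meanwhile \autoref{obs:region_property} tells us that any two congruent points of $\cG$ span a line of slope $0$, $\infty$ or $\pm 1$, and that no three congruent points of $\cG$ are ever collinear. Combining these: for $\ell$ of slope other than $0,\infty,\pm 1$ each class contributes at most one point of $\cG\cap\ell$, while for horizontal or vertical $\ell$ the single class contributes at most two; either way $|\ell\cap S_4(c,p)|\leq 2$. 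I expect the only fiddly piece to be chaining these three observations cleanly for lines not of slope $\pm 1$; everything else — the size counting, the feasible-$\cR'$ construction, and the slope-$\pm 1$ case — is routine bookkeeping on top of the minimality of $\cR$ and the disjointness of modular hyperbolae.
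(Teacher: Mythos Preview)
Your proof is correct and follows essentially the same approach as the paper: the per-hyperbola bound $|\ell\cap S_4(c,p)|\le 2$ for lines of slope other than $\pm 1$ is assembled from \autoref{obs:atmosttwoclasses}, \autoref{obs:meet_classes}(a) and \autoref{obs:region_property} exactly as in the paper, and the size bound is obtained by exhibiting the same feasible deletion set built from arbitrary excess points on each $\ell\in\cL'$. Your explicit remark that distinct modular hyperbolae are disjoint is a helpful clarification that the paper leaves implicit when invoking \autoref{thm:pversion_34}.
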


\begin{proof}
    First we prove that $\Cardinality{\ell\cap \cP_k(W)}\leq k$ for every line $\ell$ intersecting $\cG'$. Remember that modulo $p$ congruence classes of $\zz\times \zz$ are called \textit{classes} for short. We distinguish three cases depending on the slope of $\ell$.
    
    \begin{description}[style=nextline,leftmargin=1em]
    	\item[Case 1: $\ell$ has slope $0$ or $\infty$.] By \autoref{obs:meet_classes}(a), the line $\ell$ meets $H(c,p)$ in at most one class for each $c\in W$. By \autoref{obs:region_property}, $\ell\cap \cG$ contains at most two points from this class, so $\ell\cap S(c,p)=\ell\cap H(c,p)\cap \cG$ has size at most $2$. Altogether,
    	$$|\ell\cap \cP_k(W)|\le \sum_{c\in W} |\ell\cap S(c,p)|\le 2|W|\le k.$$
    	
    	\item[Case 2: $\ell$ has slope $\pm 1$.] In this case, $\ell\in \cL'$, so $|\ell\cap \cP_k(W)|=|\ell\cap \cS(W)|-|\ell\cap \cR|\le k$ by construction.
    	
    	\item[Case 3: $\ell$ has slope different from $0$, $\infty$ and $\pm 1$.] By \autoref{obs:atmosttwoclasses}, the line $\ell$ meets $H(c,p)$ in at most two classes for each $c\in W$. By the application of \autoref{obs:region_property} to the current case, $\ell\cap\cG$ contains at most one point from each class, so $\ell\cap S(c,p)=\ell\cap H(c,p)\cap \cG$ has size at most $2$. The conclusion is the same as in Case 1.
    \end{description}

    For the bound on $\Cardinality{\cP_k(W)}$, note that $\Cardinality{\cS(W)}=4(p-1)\Cardinality{W}$ by \autoref{thm:pversion_34}. 
    On the other hand, for every $\ell\in\cL'$, pick a subset $R_\ell\subseteq \ell\cap \cS(W)$ of size $\max\{0, \Cardinality{\ell\cap \cS(W)}-k\}$. 
    Then 
    $$\Cardinality{\cR}
    \leq \sum_{\ell\in \cL'} |R_{\ell}| 
    = X_{k,p}(W)$$ by definition.
\end{proof}

\begin{remark}\label{improve}
    Note that $\frac{1}{2} X_{k,p}(W)\leq \Cardinality{\cR}\leq X_{k,p}(W)$ holds for the erased point set.
    Indeed,  in the proof of \autoref{lem:construction}, we must remove the necessary excess points $R_\ell$ from $\cS(W)$ along the disjoint lines $\ell\in \cL'$ of slope $+1$. Considering the symmetries of slope $+1$ and slope $-1$ lines (cf. \autoref{lem:M4j}), this gives the lower bound on $|\cR|$. The upper bound follows from \autoref{con:randomConstruction}.
   In order to improve the upper bound  $\sum_{\ell\in \cL'} |R_{\ell}| 
    = X_{k,p}(W)$, one might try to maximize (or bound from below) the number of excess points that appear as excess points on lines of different slopes. This approach could also add a marginal numerical improvement, but we decide not to elaborate on it for general values of $k$.
       
    Note however, that when  $k=2$, the underlying number theory enables finding this optimum, see \autoref{sec:HJSW-2} and \autoref{sec:HJSW-34}. We will use similar ideas in \autoref{sec:smallK} to gain some improvement for small constant values of $k$. 
    In the general treatment of the current section, we will simply work with the above upper bound on $|\cR|$.
\end{remark}

\begin{definition}\label{def:ProbabilitySpace}
    For $\Omega_p$ from \autoref{def:G-Omega-L}, define $\cF$ to be the set of subsets of $\Omega_p$, and a probability measure $\P\from \cF\to \R, A\mapsto \frac{|A|}{|\Omega_p|}$ for all $A\in\cF$. 
    In particular, $\P(\{W\})=\frac{1}{|\Omega_p|}$ for every elementary event given by $W\in\Omega_p$.
    
    Now $X_{k,p}$ from \autoref{lem:construction} can be considered a random variable on the uniform random discrete probability space $(\Omega_p, \cF, \P)$.
    For $t\in\N$, define the event $A_t\leteq \{W\in \Omega_p: \Cardinality{W}=t\}\in \cF$. 
\end{definition}

One wishes to choose $W\in\Omega_p$ with $|W|\le k/2$ so that \eqref{eq:constructionSize} gives the largest lower bound for the construction size. 
Unfortunately, finding this optimal $W$ exactly is hard. 
So instead, we use the standard averaging argument 
\begin{equation}
    \min\{X_{k,p}(W):W\in A_t\}\leq \EE(X_{k,p}\mid A_t)=\frac{1}{|A_t|} \sum_{W\in A_t} X_{k,p}(W),
    \label{eq:min<E}
\end{equation}
which together with \eqref{eq:constructionSize} gives 
\begin{equation}
    4(p-1) t -\EE(X_{k,p}\mid A_t)\leq f_k(2p).
    \label{eq:boundWithE}
\end{equation}
In \autoref{rem:bestTS}, we will see that it is the best to take $W$ to be as large as possible, i.e., $t=\Cardinality{W}=\lfloor k/2\rfloor$.

We need the following technical statements to evaluate the conditional expected value of \eqref{eq:boundWithE}. 
Their proof is computationally heavy, so it is separated to \autoref{sec:E} enabling a better overview of the construction in the current section.

\begin{definition}\label{def:F}
    For $t,s\in \N$, define the positive number 
    \begin{equation}\label{eq:Fdef}
        F_{t,s}\leteq  
        \frac{1}{2^{t-1}(t+1)}
    \left(
        \sum_{r=0}^{b} 
        \binom{2t-s+1-4r}{2} \binom{t+1}{r} 
        + \sum_{r=0}^{a} (t-s-3r) \binom{t+1}{r}
    \right)
    \end{equation}
    where  $a\leteq \left\lfloor\frac{t-s}{3}\right\rfloor\leq \left\lfloor\frac{2t-s}{4}\right\rfloor\eqlet b$. 
\end{definition}

The expected value $\EE(X_{k,p} \mid A_t)$ of the number of deleted points of \autoref{con:randomConstruction} is given by the next statement. 
\begin{proposition}[Asymptotic value]\label{prop:key}
    For every $k,t,s\in \N$ with $k=2t+s$, 
     we have 
    \begin{equation}
        \EE(X_{k,p} \mid A_t)=F_{t,s}\cdot p+\bigO(1)
        \label{eq:EAsymptotics}
    \end{equation} as the odd prime $p$ tends to $\infty$, where $F_{t,s}$ is given by \autoref{def:F}.
\end{proposition}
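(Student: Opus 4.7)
The approach is to expand
\[
\EE(X_{k,p}\mid A_t)=\sum_{\ell\in\cL'}\EE\big[\max\{0,|\ell\cap\cS(W)|-k\}\,\big|\,|W|=t\big]
\]
by linearity of expectation and then classify the lines $\ell\in\cL'$ into $O(1)$ many ``types'', each represented by $\Theta(p)$ lines (plus $O(1)$ exceptional ones near the grid boundary). By the symmetry between slope $+1$ and slope $-1$ lines visible in \autoref{obs:meet_classes} and the square pattern of \autoref{obs:region_property}, these two slope classes contribute identically, so I would reduce to slope $+1$ and multiply by $2$.

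For a slope-$+1$ line $\ell\colon y=x+b$ and $c\in\F_p^*$, the count $n_\ell(c):=|\ell\cap S_4(c,p)|$ is determined by (i) the number of $\F_p$-solutions to $x(x+b)\equiv c\pmod p$---which is $0$, $1$, or $2$ according as $b^2+4c\pmod p$ is a non-QR, zero, or a nonzero QR---and (ii) for each resulting class of $H(c,p)$, whether $\ell$ is the ``main diagonal'' (contributing $2$ points) or one of the two ``off-diagonals'' (contributing $1$ point each) in the side-$p$ square arrangement in $\cG$, a dichotomy controlled by the integer value of $b$ relative to $\bar b=b\bmod p$. This yields $n_\ell(c)\in\{0,1,2,4\}$. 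Grouping lines by their signature $\{n_\ell(c):c\in\F_p^*\}$, quadratic-residue counting shows that approximately $(p-1)/2$ values of $c$ give $n_\ell(c)=4$, $(p-1)/2$ give $n_\ell(c)=0$, and $O(1)$ values give $n_\ell(c)\in\{1,2\}$.

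Writing $Z_\ell=\sum_{c\in W}n_\ell(c)$ and letting $M_\ell$ be the number of $c\in W$ with $n_\ell(c)=4$, the variable $M_\ell$ is hypergeometric with parameters close to $(t,(p-1)/2,(p-1)/2)$, converging to $\mathrm{Bin}(t,1/2)$ as $p\to\infty$. Hence $\EE[\max\{0,Z_\ell-k\}\mid|W|=t]$ agrees, to $O(1/p)$, with $\EE[\max\{0,4M-k\}]$ for $M\sim\mathrm{Bin}(t,1/2)$, plus a small correction from the light $c$ contributors. Summing these per-line expectations over the $\Theta(p)$ lines of each type produces the leading term $F(t,s)\,p+O(1)$.

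The specific combinatorial form of $F(t,s)$---the $\binom{t+1}{r}$ factor and the prefactor $\tfrac{1}{2^{t-1}(t+1)}$---should emerge from bundling the three slope-$+1$ lines that share the same residue $\bar b$ (one main diagonal and two off-diagonals): their joint contribution can be combined via the identity $\binom{t+1}{r}=\binom{t}{r}+\binom{t}{r-1}$. The two inner sums correspond to the two regimes in which $Z_\ell-k$ is positive---pure $n_\ell=4$ contributions (the $\binom{2t-s+1-4r}{2}$ weight) and mixed $n_\ell\in\{2,4\}$ contributions (the $(t-s-3r)$ weight). The main obstacle will be the detailed bookkeeping of line types, matching the precise $O(1)$ counts near the boundary of $\cG$ (which motivates the use of $\cG'$ to standardize the counts by excluding the bottom row), and verifying the combinatorial identity that collapses the type-wise contributions into the closed form $F(t,s)$.
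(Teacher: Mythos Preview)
Your core decomposition is right---linearity over $\ell\in\cL'$, symmetry reducing to slope $+1$, and a per-line analysis of the multiset $\{n_\ell(c):c\in\F_p^*\}$---but there is a genuine gap in the per-line counting. The claim ``$n_\ell(c)\in\{0,1,2,4\}$'' is false: for a line $\ell_d^+$ meeting $H(c,p)$ in two classes $(x,y)$ and $(-y,-x)$, the two classes need \emph{not} hit the same number of their four representatives in $\cG$. Concretely, for a class with base point $(x_0,y_0)$, the line $\ell_d^+$ contains $2$ representatives if $y_0-x_0=d$, $1$ representative if $y_0-x_0=d\pm p$, and $0$ otherwise; since the two paired classes can have $y_0-x_0$ differing by $p$, combinations like $2+1=3$ are typical. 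In the paper's notation (\autoref{tab:M4j}), $|M_{4,3}(\ell_d^+)|=d-\tfrac{p+1}{2}$ for $d\in D_2$ and $|M_{4,3}(\ell_d^+)|=\tfrac{3p-1}{2}-d$ for $d\in D_4$, so $n_\ell(c)=3$ occurs for $\Theta(p)$ values of $c$ on $\Theta(p)$ lines.

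This breaks the hypergeometric/binomial model: it is not true that roughly $(p-1)/2$ values of $c$ give $n_\ell(c)=4$. Rather, as $d$ ranges over $D_2$, the $(p-1)/2$ ``nonzero'' values of $c$ split between $n_\ell(c)=3$ and $n_\ell(c)=4$ in proportions that vary \emph{linearly} in $d$ (and over $D_4$, between $n_\ell(c)=2$ and $n_\ell(c)=3$). Consequently, the sum over $\ell$ is not $\Theta(p)$ copies of a fixed expectation but an integral of a varying one; the paper handles this via the Vandermonde-type identity $\sum_{c=0}^q\binom{c}{a}\binom{q-c}{b}=\binom{q+1}{a+b+1}$ in \autoref{lem:T0}, which is exactly what produces the factor $\tfrac{1}{2^{t-1}(t+1)}\binom{t+1}{r}$. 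Your proposed ``bundling'' of three lines sharing $\bar b$ does not recover this, because the relevant split is indexed by the continuous parameter $d$, not by a discrete main/off-diagonal trichotomy. The fix is precisely the case analysis of \autoref{lem:M4j} followed by the inner-sum computation of \autoref{lem:T0}; once the value $n_\ell(c)=3$ is accounted for, your outline aligns with the paper's proof.
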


\begin{proposition}[Upper bound]\label{prop:EUpperbound}
    For every $t,s\in \N$ with $s\geq 1$, the following inequalities hold for $F_{t,s}$, defined in \eqref{eq:Fdef}.
    \begin{equation}
    \begin{tikzcd}[row sep=tiny, column sep=scriptsize]
        & F_{t,0} \ar[r, phantom, "<"]
        & 4 - \frac{12}{\sqrt{2\pi t + 14}} + \frac{2}{t+1} \ar[r, phantom, "<"]
        & 4 
        \\
        F_{t,s} \ar[r, phantom, "\leq"]
        & F_{t,1} \ar[r, phantom, "<"] \ar[u, phantom, sloped, "\leq"]
        & 4 - \frac{20}{\sqrt{\pi (2t+1) + 7}} + \frac{6}{t+1}  \ar[u, phantom, sloped, "<"]
    \end{tikzcd}
    \label{eq:EUpperBound}
    \end{equation}
    In particular, $\sup\{F_{t,s}:t,s\in\N\}=4$. 
\end{proposition}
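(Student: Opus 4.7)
The plan is to prove the four ingredients of diagram \eqref{eq:EUpperBound} in order: the monotonicity $F(t,s)\le F(t,1)\le F(t,0)$, the explicit upper bounds on $F(t,0)$ and $F(t,1)$, the strict inequality $<4$, and the $\sup=4$ claim.

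For the monotonicity, I would verify $F(t,s+1)\le F(t,s)$ for every $s\ge 0$ by a termwise comparison in \eqref{eq:Fdef}. Every summand is non-negative and non-increasing in $s$: the binomial factor $\binom{2t-s+1-4r}{2}$ drops by $(2t-s-4r)\ge 0$, the linear factor $(t-s-3r)$ drops by $1$, and the upper indices $a=\lfloor(t-s)/3\rfloor$ and $b=\lfloor(2t-s)/4\rfloor$ are non-increasing in $s$; the truncated terms are non-negative on their range. Iterating this inequality delivers both comparisons in the left half of the diagram.

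For the explicit bounds, I would first apply the identity
\[
\binom{2t-s+1-4r}{2}=\binom{2t+1-4r}{2}-s(2t-4r)+\binom{s}{2},\qquad \binom{2t+1-4r}{2}=2(t-2r)^2+(t-2r),
\]
to recast the defining expression as a linear combination of the truncated binomial moments $M_j(m)\leteq \sum_{r=0}^{m} r^j\binom{t+1}{r}$ with $j\in\{0,1,2\}$ and $m\in\{a,b\}$. The full moments are given by the classical identities $M_0(t+1)=2^{t+1}$, $M_1(t+1)=(t+1)2^t$, and $M_2(t+1)=(t+1)(t+2)2^{t-1}$, so estimating $F(t,s)$ reduces to bounding the tails $M_j(t+1)-M_j(m)$. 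Since $a\approx t/3$ and $b\approx t/2$, the omitted binomial mass sits around the central binomial coefficient $\binom{t+1}{\lfloor(t+1)/2\rfloor}$ and a bounded number of its neighbours. Plugging sharp Stirling bounds into these central coefficients then yields the displayed quantitative upper bounds on $F(t,0)$ and $F(t,1)$. The strict inequality $4-\tfrac{12}{\sqrt{2\pi t+14}}+\tfrac{2}{t+1}<4$ is an elementary check equivalent to $6(t+1)>\sqrt{2\pi t+14}$, which holds for every $t\ge 0$. The direction $\sup\le 4$ is then immediate, while the reverse $\sup\ge 4$ follows from a matching lower bound $F(t,0)=4-\Theta(1/\sqrt{t})$ obtained from the same Stirling computation.

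The main obstacle will be calibrating the Stirling-type estimates precisely enough to produce the specific constants $12, 14, 20, 7, 2, 6$ in the statement. A crude form of Stirling gives the correct asymptotics but misses the exact constants; instead one must use sharp non-asymptotic bounds such as $\sqrt{2\pi n}(n/e)^n e^{1/(12n+1)}<n!<\sqrt{2\pi n}(n/e)^n e^{1/(12n)}$ and carefully track the residual error terms, in particular distinguishing between the parities of $t$ (which affect $a$, $b$, and $\lfloor(t+1)/2\rfloor$) and between the cases $s=0$ and $s=1$, whose summation upper limits differ by $O(1)$.
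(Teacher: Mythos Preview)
Your overall strategy matches the paper's: establish monotonicity in $s$ from the defining sum, then derive the two explicit bounds for $s\in\{0,1\}$ via a Stirling-type estimate on the central binomial coefficient. The paper realises your ``moment expansion'' step through a concrete binomial identity (its Lemma~\ref{moreex}) that rewrites $F(t,0)$ and $F(t,1)$ in closed form as $4+\tfrac{c}{t+1}$ minus a multiple of $2^{-t}\binom{t}{\lfloor t/2\rfloor}$, plus correction terms governed by $2^{-t}\binom{t}{\lfloor t/3\rfloor}$ and the partial sum $2^{-t}\sum_{r<\lfloor t/3\rfloor}\binom{t}{r}$; the bound then follows from $2^{-t}\binom{t}{\lfloor t/2\rfloor}\ge (\tfrac{\pi}{2}t+4-\tfrac{\pi}{2})^{-1/2}$ and $2^{-t}\binom{t}{\lfloor t/3\rfloor}\le 3\cdot 0.95^t$.

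There is one genuine gap in your plan. You assert that ``plugging sharp Stirling bounds into these central coefficients then yields the displayed quantitative upper bounds,'' but this is not what happens: the Stirling inequalities (even Robbins' sharp form you quote) only beat the target once $t$ is moderately large, and the paper falls back on a \emph{computer check for all $t<300$} before the analytic argument takes over. Without a finite verification step, the specific constants $12,14,20,7$ are not attainable from Stirling alone. A second imprecision: your description that ``the omitted binomial mass sits around the central binomial coefficient and a bounded number of its neighbours'' is wrong for the sum truncated at $a\approx t/3$; the correction there is governed by $\binom{t}{\lfloor t/3\rfloor}$ and a genuinely long partial sum, not by finitely many near-central terms. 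This does not break the argument, since those contributions are exponentially small in $t$, but it does mean your sketched tail analysis needs to be redone.
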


\begin{theorem}\label{thm:asymptotics}
    Let $k\in \N$. 
    Pick $t,s\in \N$ with $k=2t+s$. 
    Then as $n\to \infty$, we have  
    \begin{equation}
        kn - \big(s+ \tfrac{1}{2}F_{t,s}+\littleO(1)\big)n\leq f_k(n).
        \label{eq:bestBound}
    \end{equation} 
\end{theorem}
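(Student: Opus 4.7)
The plan is to combine the randomised algebraic construction from \autoref{con:randomConstruction} with the averaging argument \eqref{eq:min<E}--\eqref{eq:boundWithE} and the asymptotic evaluation of the expected penalty in \autoref{prop:key}, then embed the resulting construction in a $2p\times 2p$ grid into the $n\times n$ grid for a suitable prime $p$ close to $n/2$.

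First, I would fix an arbitrary $\varepsilon>0$ and choose, by the prime gap result of Baker, Harman and Pintz cited in the introduction, an odd prime $p=p_n$ with $2p\leq n$ and $2p\geq (1-\varepsilon)n$ for $n$ large. Any no-$(k+1)$-in-line set of the $2p\times 2p$ grid $\cG(p)$ then embeds into the $n\times n$ grid $[1,n]\times[1,n]$ by translation, yielding the bound $f_k(n)\geq f_k(2p)$. So it suffices to produce a no-$(k+1)$-in-line set in $\cG(p)$ of size at least $(4t-F(t,s))p-O(1)$.

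Next, with $t,s\in\N$ as given and $k=2t+s\geq 2t=2|W|$, I would apply \autoref{con:randomConstruction} ranging over all $W\in A_t\subseteq \Omega_p$, i.e., all subsets $W\subseteq\{1,\dots,p-1\}$ with $|W|=t$. For each such $W$, \autoref{lem:construction} produces a no-$(k+1)$-in-line set $\cP_k(W)\subseteq \cG(p)$ with
\[
    |\cP_k(W)|\geq 4(p-1)t - X_{k,p}(W).
\]
The averaging bound \eqref{eq:min<E} guarantees the existence of some $W^{*}\in A_t$ with $X_{k,p}(W^{*})\leq \EE(X_{k,p}\mid A_t)$, and by \autoref{prop:key} the latter equals $F(t,s)\cdot p + O(1)$ as $p\to\infty$. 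Hence
\[
    f_k(2p)\geq 4(p-1)t - F(t,s)\cdot p - O(1) = \bigl(4t-F(t,s)\bigr)p - O(1).
\]

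Finally, I would convert this bound into one on $f_k(n)$ using $4t=2(k-s)$ and $2p=(1-o(1))n$, which gives
\[
    f_k(n)\geq f_k(2p)\geq \tfrac{1}{2}\bigl(2k-2s-F(t,s)\bigr)(1-o(1))n - O(1) = kn-\bigl(s+\tfrac{1}{2}F(t,s)+o(1)\bigr)n,
\]
as required. The only nontrivial ingredients are \autoref{prop:key} (whose proof is deferred to \autoref{sec:E}) and the prime-gap input controlling the embedding loss; the rest is bookkeeping. I do not expect any obstacle here beyond checking that the $O(1)$ term from \autoref{prop:key} and the $O(n)\cdot o(1)$ slack from replacing $2p$ by $n$ are both absorbed into the $o(1)\cdot n$ term in \eqref{eq:bestBound}, which is immediate.
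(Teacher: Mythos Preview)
Your proposal is correct and follows essentially the same approach as the paper: both arguments combine \autoref{lem:construction}, the averaging bound \eqref{eq:min<E}, and \autoref{prop:key} to obtain $f_k(2p)\geq (4t-F(t,s))p-O(1)$, then embed via a prime $p$ with $2p=(1-o(1))n$ and rewrite using $2t=k-s$. The paper's version is slightly more explicit in tracking the $\varepsilon$–$\delta$ dependencies (choosing $\delta$ with $2t\delta<\varepsilon/4$ and an explicit threshold $N_\varepsilon$), but the structure and ingredients are identical.
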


\begin{proof}
    Pick $\eps>0$. 
    \autoref{prop:key} shows that there exists $N_E\in \N$, 
    such that for every odd prime $p\geq N_E$, 
    we have $\EE(X_{k,p}\mid A_t)\leq (F_{t,s}+\eps)p$.
    Pick $0<\delta \neq 1$ such that $2t\delta<\frac{\eps}{4}$. 
    The Prime number theorem implies the existence of $N_P$ 
    such that whenever $n\geq N_P$, then there is an odd prime number $p$ satisfying $(1-\delta)n\leq 2p \leq n$. 
    Define the threshold $N_\eps\leteq \max\left\{N_P, \frac{2N_P}{1-\delta}, \frac{16t}{\eps}\right\}$.

    Pick an arbitrary integer $n\geq N_\eps$. 
    Since $n\geq N_P$ by the definition of $N_\eps$, there exists an odd prime $p$ such that 
    $(1-\delta)n\leq 2p \leq n$.
    Let $A_t\subseteq \Omega_p$ be the event from \autoref{def:ProbabilitySpace} for this prime $p$.
    Now, we have 
    \begin{align*}
        f_k(n) &\geq f_k(2p) && \text{using $2p\leq n$,}
        \\&\geq 4pt-4t-\EE(X_{k,p}\mid A_t) && \text{using \eqref{eq:boundWithE},}
        \\&\geq 4pt-4t-(F_{t,s}+\eps)p && \text{using \eqref{eq:EAsymptotics},}
        \\&\geq 2nt(1-\delta)-4t-n\left(\tfrac{1}{2}F_{t,s}+\tfrac{\eps}{2}\right) && \text{using $(1-\delta)n\leq 2p \leq n$,}
        \\&= n(2t-\tfrac{1}{2}F_{t,s}-\eps)+n\left(\tfrac{\eps}{2}-2t\delta\right)-4t 
        \\&\geq  n(2t-\tfrac{1}{2}F_{t,s}-\eps)+\tfrac{\eps}{4}n -4t && \text{using $2t\delta<\tfrac{\eps}{4}$,}
        \\&\geq  n(2t-\tfrac{1}{2}F_{t,s}-\eps) && \text{using $n\geq N_\eps\geq \tfrac{16t}{\eps}$,}
        \\&= kn- (s+\tfrac{1}{2}F_{t,s}+\eps)n  && \text{using $k=2t+s$}
    \end{align*}
    giving \eqref{eq:bestBound} as required.
\end{proof}

\begin{remark}\label{rem:bestTS}
    Note that $F_{t,s}<4$ from \autoref{prop:EUpperbound} implies that \autoref{thm:asymptotics} gives the following sharpest result when applied to $t\leteq \lfloor k/2\rfloor$ and $s\in\{0,1\}$.    
\end{remark}

\begin{corollary}\label{cor:asym}
    For every $k\geq 2$, as $n\to\infty$, we have 
    \begin{align*}
    \left(1-\tfrac{1}{2k}F_{k/2,0} -\littleO(1)\right)kn &\leq f_k(n)
        \when 2\mid k, \\
    \left(1-\tfrac{1}{k}-\tfrac{1}{2k}F_{(k-1)/2,1}-\littleO(1)\right)kn &\leq f_k(n)
        \when 2\nmid k.
    \end{align*}
\end{corollary}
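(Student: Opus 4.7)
The plan is to observe that Corollary \ref{cor:asym} is a direct specialisation of Theorem \ref{thm:asymptotics} with the optimal choice of decomposition $k = 2t + s$. The theorem yields $kn - (s + \tfrac{1}{2}F(t,s) + o(1))n \leq f_k(n)$ for any $t, s \in \N$ with $k = 2t + s$, so the task reduces to picking $(t,s)$ that minimises the loss term $s + \tfrac{1}{2}F(t, s)$ and then simplifying.

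The first step is to justify that this minimum is attained at $t = \lfloor k/2 \rfloor$ and $s = k - 2\lfloor k/2 \rfloor \in \{0, 1\}$. The key input is Proposition \ref{prop:EUpperbound}, which guarantees $0 < F(t, s) < 4$ throughout. Comparing the loss at $(t,s)$ with that at $(t-1, s+2)$, the change is $2 + \tfrac{1}{2}(F(t-1, s+2) - F(t, s)) > 2 - \tfrac{1}{2}\cdot 4 = 0$, so shrinking $t$ below $\lfloor k/2\rfloor$ strictly worsens the bound. Since $t \leq \lfloor k/2 \rfloor$ is also forced by $s \geq 0$, the claimed optimum is correct, exactly as signalled in Remark \ref{rem:bestTS}.

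The second step is a direct substitution. For even $k$, plugging $t = k/2$ and $s = 0$ into Theorem \ref{thm:asymptotics} gives
\begin{equation*}
kn - \left(\tfrac{1}{2}F(k/2, 0) + o(1)\right)n \leq f_k(n),
\end{equation*}
and factoring $kn$ out of the loss term yields the first inequality of the corollary. For odd $k$, plugging $t = (k-1)/2$ and $s = 1$ (which satisfies $2t + s = k$) gives
\begin{equation*}
kn - \left(1 + \tfrac{1}{2}F((k-1)/2, 1) + o(1)\right)n \leq f_k(n),
\end{equation*}
and the same factoring delivers the second inequality.

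Since all the substantive work — the randomised construction of \autoref{con:randomConstruction}, the asymptotic analysis of $\EE(X_{k,p}\mid A_t)$, and the uniform upper bound $F(t, s) < 4$ — is already packaged into Theorem \ref{thm:asymptotics} together with Propositions \ref{prop:key} and \ref{prop:EUpperbound}, I anticipate no real obstacle. The only non-mechanical ingredient is the optimisation of $(t,s)$, and that is settled by the short comparison argument above; the corollary itself is just a tidy rewriting of the inequality in Theorem \ref{thm:asymptotics}.
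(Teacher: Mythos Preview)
Your proposal is correct and follows essentially the same approach as the paper: the corollary is obtained by specialising Theorem~\ref{thm:asymptotics} to $t=\lfloor k/2\rfloor$, $s\in\{0,1\}$, exactly as indicated in Remark~\ref{rem:bestTS}. Your additional comparison argument for optimality of $(t,s)$ (using $0<F(t,s)<4$ from Proposition~\ref{prop:EUpperbound}) makes explicit what the paper leaves to that remark, but is not strictly needed to establish the stated inequalities.
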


As an immediate application, we obtain one of the main statements of this paper.
\begin{proof}[Proof of \autoref{k_asymp}]
    Apply \autoref{cor:asym} and use the upper bound of \autoref{prop:EUpperbound} to get 
    \begin{align*}
        kn-2n<\left(1-\frac{2}{k} + \frac{6}{k\sqrt{\pi k + 14}} - \frac{2}{k(k+2)}\right) kn &\leq f_k(n) \when 2\mid k, \\
        kn-3n<\left(1-\frac{3}{k} + \frac{10}{k\sqrt{\pi k + 7}} - \frac{6}{k(k+1)}\right)kn &\leq f_k(n) \when 2\nmid k.
    \end{align*}
    as stated. We have taken into account that we have strict inequality in \autoref{prop:EUpperbound} in order to get rid of the error term $o(1)$.
\end{proof}

\begin{remark}\label{rem:explicitVsStirling}
When $k$ is a concrete number, $F_{t,s}$ can be computed precisely using \eqref{eq:Fdef}, so instead of using an upper bound on the function $F_{t,s}$ using Stirling's approximation as in \autoref{k_asymp}, we can get a sharper bound of form $$(C_k-\littleO(1))kn\leq f_k(n)$$ straight from \autoref{cor:asym}.
We summarize the results of this section for small values of $k$ in \autoref{tab:lowerBoundsGeneral}, which also demonstrates the 
validity of the estimate in \autoref{k_asymp}.
These include the improved bounds of \autoref{k_small}, except for the case when $k\in\{3,6\}$, where even better results are obtained in the following sections.

\begin{table}[!htb]
    \centering
    \begin{subtable}{.5\textwidth}
        \centering
        \begin{tabular}{C||R@{${}\approx{}$}LCC}
        k & \multicolumn{2}{C}{C_k} & S_k & C_k-S_k \\ \hline\hline
 2 & \frac{1}{2} & 0.50000 & 0.41612 & 0.08390 \\
 4 & \frac{3}{4} & 0.75000 & 0.70769 & 0.04230 \\
 6 & \frac{13}{16} & 0.81250 & 0.79948 & 0.01300 \\
 8 & \frac{109}{128} & 0.85156 & 0.84489 & 0.00667 \\
 10 & \frac{559}{640} & 0.87344 & 0.87237 & 0.00107 \\
 12 & \frac{229}{256} & 0.89453 & 0.89097 & 0.00356 \\
 14 & \frac{1621}{1792} & 0.90458 & 0.90450 & 0.00008 \\
 16 & \frac{11255}{12288} & 0.91593 & 0.91483 & 0.00110 \\
 18 & \frac{2837}{3072} & 0.92350 & 0.92302 & 0.00048 \\
 20 & \frac{4763}{5120} & 0.93027 & 0.92968 & 0.00059 \\
        \end{tabular}
        \caption{For even $k$, we have $S_k=1-\frac{2}{k} + \frac{6}{k\sqrt{\pi k + 14}} - \frac{2}{k(k+2)}$.}
        \label{tab:evenK}
    \end{subtable}%
    \begin{subtable}{.5\textwidth}
        \centering
        \begin{tabular}{C||R@{${}\approx{}$}LCC}
        k & \multicolumn{2}{C}{C_k} & S_k & C_k-S_k \\ \hline\hline
3 & \frac{7}{12} & 0.58333 & 0.32249 & 0.26085 \\
 5 & \frac{41}{60} & 0.68333 & 0.61970 & 0.06363 \\
 7 & \frac{171}{224} & 0.76339 & 0.72961 & 0.03379 \\
 9 & \frac{193}{240} & 0.80417 & 0.78708 & 0.01709 \\
 11 & \frac{5}{6} & 0.83333 & 0.82284 & 0.01049 \\
 13 & \frac{4969}{5824} & 0.85319 & 0.84748 & 0.00572 \\
 15 & \frac{13379}{15360} & 0.87103 & 0.86562 & 0.00541 \\
 17 & \frac{34559}{39168} & 0.88233 & 0.87961 & 0.00272 \\
 19 & \frac{43457}{48640} & 0.89344 & 0.89076 & 0.00268 \\
 21 & \frac{17775}{19712} & 0.90173 & 0.89990 & 0.00184 \\
        \end{tabular}
        \caption{For odd $k$, we have $S_k=1-\frac{3}{k} + \frac{10}{k\sqrt{\pi k + 7}} - \frac{6}{k(k+1)}$.}
        \label{tab:oddK}
    \end{subtable}

    \caption{Let $k\in\N$ be fixed. The best lower bound of this section is $(C_k-\littleO(1))kn\leq f_k(n)$ from \autoref{cor:asym}. 
    Its approximated value is $S_k\cdot kn\leq f_k(n)$ from \autoref{k_asymp}.}
    \label{tab:lowerBoundsGeneral}
\end{table}
\end{remark}

\section{Expected value of the number of deleted points: proofs of 
\autoref{prop:key} and \autoref{prop:EUpperbound}}
\label{sec:E}

This section is devoted to the proofs of \autoref{prop:key} and \autoref{prop:EUpperbound} to complete the proof of \autoref{thm:asymptotics} from \autoref{sec:constructionGeneral}.

Since the proofs are rather long and technical, we start with an overview of the flow of the proofs. 
Let us recall that \autoref{prop:key} states the following: 
For every $k,t,s\in \N$ with $k=2t+s$, 
 we have 
\begin{equation*}
    \EE(X_{k,p} \mid A_t)=F_{t,s}\cdot p+\bigO(1)
\end{equation*}
for the expected number of deleted points of \autoref{con:randomConstruction}
as $p\to\infty$ where $F_{t,s}$ is given by \autoref{def:F}, 
and $A_t$ is the set of $t$-element subsets of $\F_p^*$, cf. \autoref{def:ProbabilitySpace}.
By \autoref{def:ProbabilitySpace} and \autoref{lem:construction},  we have 
\begin{equation}
    \EE(X_{k,p}\mid A_t)=
\sum_{W\in A_t} X_{k,p}(W)\cdot \frac{1}{\Cardinality{A_t}}
= \frac{1}{\Cardinality{A_t}}\sum_{W\in A_t}\sum_{\ell\in\cL'} \max\{0,\Cardinality{\ell\cap \cS(W)}-k\},
\label{eq:EfirstStep}
\end{equation}
where $\cS(W)=\bigcup_{c\in W} S(c,p)$ by \autoref{con:randomConstruction}. 
Note that $\Cardinality{\ell\cap \cS(W)}-k$ depends only on $s$ and the $5$-tuple
$$g_\ell(W)\leteq (\Cardinality{\{c\in W:\Cardinality{S(c,p)\cap \ell}=j\}}:j=0,\dots,4) \in\N^5,$$
cf. \eqref{eq:g_ell}. 
More explicitly, $\Cardinality{\ell\cap \cS(W)}-k = \Delta_s(g_\ell(W))$, for the suitable function $\Delta_s\from \N^5 \to \Z$  from \autoref{def:DeltaT}.  
Denote by $$T=T(t,s)=\{g_\ell(W):W\in A_t, \Delta_s(g_\ell(W))>0\}$$ the set of $5$-tuples induced by $W\in A_t$ giving a positive contribution to \eqref{eq:EfirstStep}, cf. \autoref{def:DeltaT}. 
Then rearranging the order of the summation (and leaving the $0$ terms) in \eqref{eq:EfirstStep} gives 
\begin{equation}
    \EE(X_{k,p}\mid A_t)=
\frac{1}{\Cardinality{A_t}}\sum_{\tau\in T}\Delta_s(\tau)\sum_{\ell\in\cL'} g_\ell^{-1}(\tau),
\label{eq:EsecondStep}
\end{equation}
as we will see in depth in \autoref{sec:step1}. 
Then in \autoref{sec:step2}, we evaluate the asymptotic behaviour of the inner sum of \eqref{eq:EsecondStep} (as $p\to\infty$) by carefully analysing the symmetries of the modular hyperbola $S(c,p)$. 
Afterwards, in \autoref{sec:step3}, we evaluate the asymptotic value of the outer sum of \eqref{eq:EsecondStep}. 
\autoref{sec:step4} concludes the proof of \autoref{prop:key}, and using various binomial identities, we compute a more explicit form of $F_{t,s}$, notable for the case $s\in\{0,1\}$.  
Finally, in \autoref{sec:step5}, using Stirling's formula, we determine an upper bound for $F_{t,s}$ in an explicit and closed form, 
thereby proving \autoref{prop:EUpperbound}.

\begin{remark}
Note that the sharpest upper bound for $\EE(X_{k,p}\mid A_t)$ will eventually be given by the cases $F_{t,0}$ and $F_{t,1}$ as noted in \autoref{rem:bestTS}. 
To prove this, throughout this section, we keep $s,t\in\N$ arbitrary with $k=2t+s$, as this essentially does not increase the length of the argument compared to assuming $s\in\{0,1\}$. 
Nevertheless, for the first reading, one may wish to fix the values $t=\left\lfloor k/2\right\rfloor$ (lower integer part), and $s=k-2t\in\{0,1\}$ (depending on the parity of $k$) for the rest of this section 
if the goal is to prove \autoref{k_asymp} without worrying if a sharper result is possible to attain for a different choice of $(t,s)$. 
\end{remark}

\subsection{Step 1: A general formula for the expected value}
\label{sec:step1}

To start, we need the following technical definitions.
\begin{definition}\label{def:M4j}
    For $j\in \{0,\dots, 4\}$ 
    and line $\ell\in\cL'$, 
    define 
    $M_{j}(\ell)\leteq \{c\in \{1,\dots,p-1\} : \Cardinality{\ell \cap S(c,p)}=j\}$.
\end{definition}

\begin{remark}\label{note:Mij}
    For any $W\in \Omega_p$, we have 
    $\bigcup_{j=0}^4 (W\cap M_{j}(\ell)) = W$ 
    and  
    $\Cardinality{\ell\cap \cS(W)} = \sum_{j=0}^4 j\cdot \Cardinality{W\cap M_{j}(\ell)}
    $.
\end{remark}

\begin{definition}\label{def:DeltaT}
    Let $t,s\in \N$. 
    Define the function $\Delta_s\from \N^5 \to \Z$ by $\tau=(\tau_0,\dots,\tau_4) \mapsto 2\tau_4+\tau_3 - \tau_1 - 2\tau_0-s$, and the index set 
    $T(t,s)\leteq \{\tau=(\tau_0,\dots,\tau_4)\in \N^5 : 
    \sum_{j=0}^4 \tau_j=t,\; 
    \Delta_s(\tau) > 0\}$.
\end{definition}
We will use the fact that $|T(t,s)|$ is independent of $p$.
While $X_{k,p}(W)$ is hard to compute for any fixed $W\in \Omega_p$, 
it turns out that the conditional expected value $\EE(X_{k,p}\mid A_t)$ (i.e., the average value of $X_{k,p}$ restricted to $A_t\subseteq \Omega_p$) can be expressed more explicitly.

\begin{lemma}\label{lem:EwithM4j}
    For every $k,t,s\in \N$ with $k=2t+s$,  
    we have  
    \begin{equation}\label{eq:E}
\EE(X_{k,p}\mid A_t)=
    \frac{1}{\binom{p-1}{t}} 
    \sum_{\tau\in T(t,s)}
    \Delta_s(\tau) 
    \sum_{\ell\in\cL'}
    \prod_{j=0}^4\binom{\Cardinality{M_{j}(\ell)}}{\tau_{j}}.
\end{equation}
\end{lemma}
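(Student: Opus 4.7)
The plan is to unfold the conditional expectation, interchange summations so that each line $\ell\in\cL'$ can be handled individually, and then regroup the sets $W\in A_t$ according to the profile of their intersections with the partition $\{M_{4,j}(\ell)\}_{j=0}^4$ of $\{1,\dots,p-1\}$. Since $A_t$ is simply the family of $t$-subsets of $\{1,\dots,p-1\}$, we have $|A_t|=\binom{p-1}{t}$, and the definition of conditional expectation together with \autoref{lem:construction} gives
\begin{equation*}
\EE(X_{k,p}\mid A_t)
= \frac{1}{\binom{p-1}{t}}\sum_{\ell\in\cL'}\sum_{W\in A_t}\max\bigl\{0,\,|\ell\cap\cS(W)|-k\bigr\}
\end{equation*}
after swapping the two outer sums.

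Next I would fix $\ell\in\cL'$. By the bound $|\ell\cap S_4(c,p)|\leq 4$ established in the proof of \autoref{thm:pversion_34}, the sets $M_{4,0}(\ell),\dots,M_{4,4}(\ell)$ really do partition $\{1,\dots,p-1\}$. For each $W\in A_t$, assign the profile $\tau(\ell,W)=(\tau_0,\dots,\tau_4)$ with $\tau_j\leteq |W\cap M_{4,j}(\ell)|$. Then $\sum_j \tau_j = |W| = t$, and \autoref{note:Mij} gives $|\ell\cap\cS(W)|=\sum_j j\tau_j$. Substituting $k=2t+s=2\sum_j\tau_j+s$ yields
\begin{equation*}
|\ell\cap\cS(W)|-k
= \sum_{j=0}^4 (j-2)\tau_j - s
= 2\tau_4+\tau_3-\tau_1-2\tau_0-s
= \Delta_s(\tau),
\end{equation*}
so the contribution of a fixed profile $\tau$ to the inner sum over $W\in A_t$ is $\max\{0,\Delta_s(\tau)\}$ multiplied by the number of $t$-subsets of $\{1,\dots,p-1\}$ realising that profile, which equals $\prod_{j=0}^4\binom{|M_{4,j}(\ell)|}{\tau_j}$ by a straightforward counting argument based on the disjointness of the pieces.

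Aggregating across profiles, the inner sum becomes $\sum_{\tau}\max\{0,\Delta_s(\tau)\}\prod_j\binom{|M_{4,j}(\ell)|}{\tau_j}$, ranging over all $\tau\in\N^5$ with $\sum_j\tau_j=t$; the $\max$ kills every $\tau$ with $\Delta_s(\tau)\leq 0$, leaving exactly the index set $T(t,s)$ from \autoref{def:DeltaT}. A final exchange of the sums over $\ell$ and $\tau$ yields \eqref{eq:E}. The argument is essentially bookkeeping, so there is no genuine obstacle; the only thing to take care of is verifying the partition property of the $M_{4,j}(\ell)$ and keeping the substitution $k=2t+s$ consistent with $\sum_j\tau_j=t$ when collapsing $\sum_j j\tau_j - k$ into the quantity $\Delta_s(\tau)$.
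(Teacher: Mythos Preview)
Your proof is correct and follows essentially the same approach as the paper: both unfold the conditional expectation, group the $t$-subsets $W$ according to the profile $\tau_j=|W\cap M_{4,j}(\ell)|$, identify $|\ell\cap\cS(W)|-k$ with $\Delta_s(\tau)$, and count profile realisations as $\prod_j\binom{|M_{4,j}(\ell)|}{\tau_j}$. The only cosmetic difference is that the paper packages the profile map as $g_\ell$ and computes fibre sizes, while you phrase it directly as a counting argument; the content is identical.
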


\begin{proof}
    First, for $\ell\in\cL'$, we claim that the map  
    \begin{equation}    
    \begin{aligned}
        g_\ell\from \{W\in A_t:\Cardinality{\ell\cap \cS(W)}-k>0\} &\to T(t,s) \\
        W&\mapsto (\Cardinality{W\cap M_{0}(\ell)}, \dots,\Cardinality{W\cap M_{4}(\ell)})
    \end{aligned}
    \label{eq:g_ell}
    \end{equation}
    is well-defined.
    Indeed, \autoref{note:Mij} shows that 
    $\sum_{j=0}^4 g_\ell(W)_j = \Cardinality{W}=t$. 
    Hence, using the assumption, we see that $k=s+2\sum_{j=0}^4 g_\ell(W)_j$. 
    This together with \autoref{note:Mij} implies that 
    \[\Cardinality{\ell\cap \cS(W)}-k = \Big(\sum_{j=0}^4 j\cdot g_\ell(W)_j\Big) -\Big (s +2\sum_{j=0}^4 g_\ell(W)_j\Big) = 
    \Delta_s(g_\ell(W))\]
    by \autoref{def:DeltaT}. 
    So $g_\ell(W)\in T(t,s)$ by definition as claimed. 
    
    Second, using the claim and \eqref{eq:EfirstStep}, we can express the expected value as 
    \[\EE(X_{k,p}\mid A_t)
    = \frac{1}{\Cardinality{A_t}}\sum_{\ell\in\cL'} 
    \sum_{\tau\in T(t,s)}
    \sum_{W\in g_\ell^{-1}(\tau)}
    \Delta_s(\tau).
    \]
    Finally note that the inverse image has cardinality $\Cardinality{g_\ell^{-1}(\tau)} = \prod_{j=0}^4 \binom{\Cardinality{M_{j}(\ell)}}{\tau_j}$ 
    and that $\Cardinality{A_t}=\binom{p-1}{t}$.
\end{proof}

To express $\EE(X_{k,p}\mid A_t)$ more explicitly, 
using some technical definitions and elementary computational results, 
we compute the right-hand side of \eqref{eq:E} explicitly in two steps (see \autoref{sec:step2} and \autoref{sec:step3}).
Then we determine its exact asymptotic behaviour as $p\to\infty$ to prove \autoref{prop:key} (see \autoref{sec:step4}).  
Finally, we give a simple upper bound to this asymptotic to conclude the proof of \autoref{prop:EUpperbound} (see \autoref{sec:step5}).

\subsection{Step 2: Determining the inner sum of \texorpdfstring{\eqref{eq:E}}{(\ref{eq:E})}}
\label{sec:step2}
We start by computing $\Cardinality{M_{j}(\ell)}$ explicitly for every $\ell\in\cL'$. 

\begin{definition}\label{def:D}
    For $d\in \Z$, denote the lines of slope $\pm 1$ by  
    $\ell_d^+\leteq \ell[y=x+d]$ and 
    $\ell_d^-\leteq \ell[y=-x+d]$. 
    Let $D^+\leteq [-\frac{3p-3}{2}, \frac{5p-3}{2}]$ and 
    $D^-\leteq [-\frac{p-3}{2}, \frac{7p-3}{2} ]$, 
    two integer intervals of size $4p-2$. 
    Let $D^\top\leteq [\frac{p+1}{2},\frac{5p-3}{2}]$, the top half of the interval $D^+$. 
    Partition $D^\top$ into the following intervals of length $\frac{p-1}{2}$, $1$, $\frac{p-1}{2}$, $\frac{p-1}{2}$ and $\frac{p-1}{2}$ respectively: 
        $D_2\leteq [\frac{p+1}{2},p-1 ]$, 
        $D_3\leteq \{p\}$,  
        $D_4\leteq [p+1, \frac{3p-1}{2} ]$,  
        $D_5\leteq [\frac{3p+1}{2}, 2p-1 ]$, 
        $D_6\leteq [2p, \frac{5p-3}{2} ]$.
\end{definition}

\begin{remark}\label{rem:L'}
    Note that $\cL'$ from \autoref{def:G-Omega-L} can be expressed as   
    \[\cL' = \left\{\ell_d^+:d\in D^+\right\} \cup \left\{\ell_d^-:d\in D^- \right\}.\] 
\end{remark}

\begin{lemma}\label{lem:M4j}
    The exact value of $|M_{j}(\ell)|$ for every $\ell\in\cL'$ is given by the following statements. 
    \begin{enumerate}
        \item\label{item:D-} We have $\cardinality{M_{j}(\ell_d^-)} = \cardinality{M_{j}(\ell_{d-p}^+)}$ for every $d\in D^-$. 
        (So it is enough to find $\cardinality{M_{j}(\ell_{d}^+)}$ for $d\in D^+$.)
        
        \item\label{item:D+} We have $\cardinality{M_{j}(\ell_{p-d}^+)} = \cardinality{M_{j}(\ell_d^+)}$ for every $d\in D^+$. 
        (So it is enough to find $\cardinality{M_{j}(\ell_{d}^+)}$ for $d\in D^\top$.)

        \item\label{item:DT} 
        For every $d\in D^\top$, the value of $\cardinality{M_{j}(\ell_d^+)}$ is given in \autoref{tab:M4j}. 
 \begin{table}[!htb]
        \begin{tabular}{C||CCCCC}
        \Cardinality{M_{j}(\ell_d^+)}  & 
        d\in D_2  & 
        d\in D_3 & 
        d\in D_4 & 
        d\in D_5 & 
        d\in D_6
        \\ \hline\hline
        j=0 & 
        \frac{p-1}{2} & 
        \frac{p-1}{2} & 
        \frac{p-1}{2} & 
        \frac{p-1}{2} &
        d-\frac{3p+1}{2}
        \\ %\hline
        j=1 &
        0 &
        0 & 
        1 & 
        d-\frac{3p-1}{2} &
        \frac{5p-1}{2}-d
        \\ %\hline
        j=2 &
        1 & 
        0 &  
        d-p-1 & 
        2p-1-d  &
        0
        \\ %\hline
        j=3 & 
        d-\frac{p+1}{2} & 
        \frac{p-1}{2} & 
        \frac{3p-1}{2}-d & 
        0 &
        0
        \\ %\hline
        j=4 &
        p-1-d & 
        0 & 
        0 & 
        0 &
        0
        \end{tabular}
        \centering
        \caption{The value of $|M_{j}(\ell_d^+)|$ for $d\in D^\top$ and $j\in\{0,\dots,4\}$. The intervals $D_2,\dots,D_6$ are defined in \autoref{def:D}.}
        \label{tab:M4j}
    \end{table}

\end{enumerate} 
\end{lemma}

\begin{figure}[htb!]
	\centering
	
	\definecolor{ffqqqq}{rgb}{1.,0.,0.}
\definecolor{qqqqff}{rgb}{0.,0.,1.}
\definecolor{ffqqff}{rgb}{1.,0.,1.}
\definecolor{qqffqq}{rgb}{0.,1.,0.}
\definecolor{lightred}{RGB}{255, 100, 100}
\definecolor{lightgreen}{RGB}{100, 255, 100}
\definecolor{lightblue}{RGB}{100, 100, 255}
\definecolor{lightgggrey}{rgb}{0.7,0.7,0.7}

\begin{tikzpicture}[line cap=round,line join=round,>=triangle 45,x=0.7cm,y=0.7cm, scale=0.9]
%p=11 ezen az ábrán.
%\clip(-6.4,-1.2) rectangle (16.5,21.5);
%A konstrukció igazából ettől eddig tart: [-5, 16]×[0, 21]
\draw [step=1,gray,line width=0.2pt, opacity=0.5] (-5,0) grid (16,21);

\draw [line width=1.5pt] (0,0)--(0,21);
\draw [line width=1.5pt] (11,0)--(11,21);
\draw [line width=1.5pt] (-5,0)--(16,0);
\draw [line width=1.5pt] (-5,11)--(16,11);

\draw [fill=gray, color=gray, line width=1pt, fill opacity=0.2] (-5,1)--(-5,5)--(-1,5)--(-1,1)--cycle;
\draw [fill=gray, color=gray, line width=1pt, fill opacity=0.2] (-5,6)--(-5,10)--(-1,10)--(-1,6)--cycle;
\draw [fill=gray, color=gray, line width=1pt, fill opacity=0.2] (-5,12)--(-5,16)--(-1,16)--(-1,12)--cycle;
\draw [fill=gray, color=gray, line width=1pt, fill opacity=0.2] (-5,17)--(-5,21)--(-1,21)--(-1,17)--cycle;

\draw [fill=gray, color=gray, line width=1pt, fill opacity=0.2] (5,1)--(5,5)--(1,5)--(1,1)--cycle;
\draw [fill=gray, color=gray, line width=1pt, fill opacity=0.2] (5,6)--(5,10)--(1,10)--(1,6)--cycle;
\draw [fill=gray, color=gray, line width=1pt, fill opacity=0.2] (5,12)--(5,16)--(1,16)--(1,12)--cycle;
\draw [fill=gray, color=gray, line width=1pt, fill opacity=0.2] (5,17)--(5,21)--(1,21)--(1,17)--cycle;

\draw [fill=gray, color=gray, line width=1pt, fill opacity=0.2] (6,1)--(6,5)--(10,5)--(10,1)--cycle;
\draw [fill=gray, color=gray, line width=1pt, fill opacity=0.2] (6,6)--(6,10)--(10,10)--(10,6)--cycle;
\draw [fill=gray, color=gray, line width=1pt, fill opacity=0.2] (6,12)--(6,16)--(10,16)--(10,12)--cycle;
\draw [fill=gray, color=gray, line width=1pt, fill opacity=0.2] (6,17)--(6,21)--(10,21)--(10,17)--cycle;

\draw [fill=gray, color=gray, line width=1pt, fill opacity=0.2] (12,1)--(12,5)--(16,5)--(16,1)--cycle;
\draw [fill=gray, color=gray, line width=1pt, fill opacity=0.2] (12,6)--(12,10)--(16,10)--(16,6)--cycle;
\draw [fill=gray, color=gray, line width=1pt, fill opacity=0.2] (12,12)--(12,16)--(16,16)--(16,12)--cycle;
\draw [fill=gray, color=gray, line width=1pt, fill opacity=0.2] (12,17)--(12,21)--(16,21)--(16,17)--cycle;

\draw (-5,-0.78) node[anchor=mid] {$-\frac{p-1}{2}$};
\draw (-1,-0.78) node[anchor=mid] {$-1$};
\draw (0,-0.78) node[anchor=mid] {$0$};
\draw ( 1,-0.78) node[anchor=mid] {$1$};
\draw ( 5,-0.78) node[anchor=mid] {$\frac{p-1}{2}$};
\draw ( 6,-0.78) node[anchor=mid] {$\frac{p+1}{2}$};
\draw (10,-0.78) node[anchor=mid] {$p-1$};
\draw (12,-0.78) node[anchor=mid] {$p+1$};
\draw (16,-0.78) node[anchor=mid] {$\frac{3p-1}{2}$};

\draw (16.4,0) node[anchor=west] {$0$};
\draw (16.4,1) node[anchor=west] {$1$};
\draw (16.4,5) node[anchor=west] {$\frac{p-1}{2}$};
\draw (16.4,6) node[anchor=west] {$\frac{p+1}{2}$};
\draw (16.4,10) node[anchor=west] {$p-1$};
\draw (16.4,11) node[anchor=west] {$p$};
\draw (16.4,12) node[anchor=west] {$p+1$};
\draw (16.4,16) node[anchor=west] {$\frac{3p-1}{2}$};
\draw (16.4,17) node[anchor=west] {$\frac{3p+1}{2}$};
\draw (16.4,21) node[anchor=west] {$2p-1$};

%Draw axes of symmetry
\draw [dashed, line width=1pt] (5.5,0)--(5.5,21);
\draw [dashed, line width=1pt] (-5,0.5)--(15.5,21);

%Legyen az 5 rezsim színe: piros, narancs, sárga, zöld, kék
\draw [fill=lightred, draw=none, fill opacity=0.3] (-5,1)--(-5,5)--(11,21)--(15,21)--cycle;
\draw [color=lightred, line width=1pt] (-5,1)--(15,21);
\draw [color=lightred, line width=1pt] (-5,5)--(11,21);
\draw [color=red, line width=2pt] (-5,3)--(13,21);

\draw [color=BurntOrange, line width=2pt] (-5,6)--(10,21);

\draw [fill=lightgreen, draw=none, fill opacity=0.3] (-5,7)--(-5,11)--(5,21)--(9,21)--cycle;
\draw [color=lightgreen, line width=1pt] (-5,11)--(5,21);
\draw [color=lightgreen, line width=1pt] (-5,7)--(9,21);
\draw [color=ForestGreen, line width=2pt] (-5,9)--(7,21);

\draw [fill=lightblue, draw=none, fill opacity=0.3] (-5,12)--(-5,16)--(0,21)--(4,21)--cycle;
\draw [color=lightblue, line width=1pt] (-5,12)--(4,21);
\draw [color=lightblue, line width=1pt] (-5,16)--(0,21);
\draw [color=blue, line width=2pt] (-5,15)--(1,21);

\draw [fill=CarnationPink, draw=none, fill opacity=0.3] (-5,17)--(-5,21)--(-1,21)--cycle;
\draw [color=CarnationPink, line width=1pt] (-5,17)--(-1,21);
\draw [color=purple, line width=2pt] (-5,18)--(-2,21);

\draw[decorate, decoration={brace, amplitude=10pt}, color=lightred, line width=1pt] (-5.4,1)--(-5.4,5);
\draw [color=orange, line width=1pt] (-5.8,6)--(-5.45,6);
\draw[decorate, decoration={brace, amplitude=10pt}, color=lightgreen, line width=1pt] (-5.4,7)--(-5.4,11);
\draw[decorate, decoration={brace, amplitude=10pt}, color=lightblue, line width=1pt] (-5.4,12)--(-5.4,16);
\draw[decorate, decoration={brace, amplitude=10pt}, color=CarnationPink, line width=1pt] (-5.4,17)--(-5.4,21);

\draw (-5.9,3) node[anchor=east, color=lightred] {$D_2$};
\draw (-5.9,6) node[anchor=east, color=orange] {$D_3$};
\draw (-5.9,9) node[anchor=east, color=lightgreen] {$D_4$};
\draw (-5.9,14) node[anchor=east, color=lightblue] {$D_5$};
\draw (-5.9,19) node[anchor=east, color=CarnationPink] {$D_6$};

\draw (13,21.2) node[anchor=south, color=red] {$\ell^{(2)}$};
\draw (10,21.2) node[anchor=south, color=BurntOrange] {$\ell^{(3)}$};
\draw (7,21.2) node[anchor=south, color=ForestGreen] {$\ell^{(4)}$};
\draw (1,21.2) node[anchor=south, color=blue] {$\ell^{(5)}$};
\draw (-2,21.2) node[anchor=south, color=purple] {$\ell^{(6)}$};

% Reflection symmetries
\draw [sparse dotted, line width=1pt] (11,0)--(16.5,5.5) -- (0,22) -- (-5.5,16.5) -- cycle;

\draw [sparse dotted, line width=1pt] (0,0)--(16.5,16.5) -- (11,22) -- (-5.5,5.5) -- cycle;

% Part labels
\draw (4,7) node[labelbox] {$A_{0,0}$};
\draw (15,7) node[labelbox] {$A_{1,0}$};
\draw (2,20) node[labelbox] {$A_{0,1}$};
\draw (15,18) node[labelbox] {$A_{1,1}$};

\draw (-4,9) node[labelbox] {$B_{-1,0}$};
\draw (9,7) node[labelbox] {$B_{0,0}$};
\draw (-4,20) node[labelbox] {$B_{-1,1}$};
\draw (7,20) node[labelbox] {$B_{0,1}$};

\draw (4,2) node[labelbox] {$C_{0,0}$};
\draw (15,2) node[labelbox] {$C_{1,0}$};
\draw (2,15) node[labelbox] {$C_{0,1}$};
\draw (15,13) node[labelbox] {$C_{1,1}$};

\draw (-2,2) node[labelbox] {$D_{-1,0}$};
\draw (9,2) node[labelbox] {$D_{0,0}$};
\draw (-4,15) node[labelbox] {$D_{-1,1}$};
\draw (9,13) node[labelbox] {$D_{0,1}$};

% Symmetries in the red stripe
%\draw [dotted, line width=1pt, color=lightred] (-5,5)--(-3,3);
%\draw [dotted, line width=1pt, color=lightred] (0.5,10.5)--(2.5,8.5);
%\draw [dotted, line width=1pt, color=lightred] (6,16)--(8,14);
%\draw [dotted, line width=1pt, color=lightred] (12,21)--(13.5,19.5);

% Points on the red line with the same x*y value
\node[rectangle, draw, fill=black, minimum size=8pt, inner sep=0pt] at (-5,3) {};
\node[rectangle, draw, fill=black, minimum size=8pt, inner sep=0pt] at (-3,5) {};
\node[rectangle, draw, fill=black, minimum size=8pt, inner sep=0pt] at (6,14) {};
\node[rectangle, draw, fill=black, minimum size=8pt, inner sep=0pt] at (8,16) {};

\node[diamond, minimum width=14pt, minimum height=10pt, draw, fill=black!55!white, inner sep=0pt] at ( 1,9) {};
\node[diamond, minimum width=14pt, minimum height=10pt, draw, fill=black!55!white, inner sep=0pt] at (2,10) {};
\node[diamond, minimum width=14pt, minimum height=10pt, draw, fill=black!55!white, inner sep=0pt] at (12,20) {};
\node[diamond, minimum width=14pt, minimum height=10pt, draw, fill=black!55!white, inner sep=0pt] at (13,21) {};

\node[regular polygon, regular polygon sides=3, draw, fill=black!20!white, minimum size=12pt, inner sep=0pt] at (-1,7) {};
\node[regular polygon, regular polygon sides=3, draw, fill=black!20!white, minimum size=12pt, inner sep=0pt] at (4,12) {};
\node[regular polygon, regular polygon sides=3, draw, fill=black!20!white, minimum size=12pt, inner sep=0pt] at (10,18) {};

\node[circle, draw, fill=black!5!white, minimum size=8pt, inner sep=0pt] at (-4,4) {};
\node[circle, draw, fill=black!5!white, minimum size=8pt, inner sep=0pt] at (7,15) {};

\end{tikzpicture}
	
	\caption{Illustration of the grid $\cG$ and the sets $\{\ell_d^{+}: d\in D_m\}$ for each $m\in \{2, \dots, 6\}$, with each set shown as a differently coloured strip. In each set, one line $\ell^{(m)}=\ell_{d_m}^+$ (for some $d_m\in D_m$) is shown thickened as an example. As per \autoref{lem:M4j}, reflecting a line $\ell\in \cL'$ across any of the two dashed axes does not change the values of $|M_{j}(\ell)|$, allowing us to deduce these values for all $\ell$ from just those for $\ell_d^{+}$ with $d\in D^\top$. 
    The set $S(c,p)$ is symmetric to the dotted lines. For various values of $c$, this symmetry is indicated on $S(c,p)\cap \ell^{(2)}$ by the differently shaped grid points.}
	\label{fig:slope1_regimes}
\end{figure}

\begin{proof}
    Recall  \autoref{def:M4j} and \autoref{const:S3S4}.
    Denote by $P'$ the reflection of $P\in\cG$ across any of the lines $\ell[y=\pm x+ap]$ for $a\in\Z$ (indicated by dotted lines in \autoref{fig:slope1_regimes}). 
    Note that \autoref{obs:meet_classes} imply a \emph{partial reflection symmetry}: if $P\in S(c,p)$ (for some $c\in\F_p^*=\{1,\dots,p-1\}$) and $P'\in \cG$, then $P'\in S(c,p)$. 

    For \autoref{item:D-} and \autoref{item:D+}, note that the partial reflection symmetry above implies that $|M_{j}(\ell)|$ is invariant under reflecting the line $\ell\in \cL'$ across the lines $y=x+\frac{p}{2}$ and $x=\frac{p}{2}$ (indicated by dashed lines in \autoref{fig:slope1_regimes}).
    
    For \autoref{item:DT}, we first prove the case $d\in D_2$. 
    Consider $\ell_d^+$ (the red line in \autoref{fig:slope1_regimes}) 
    and its partial reflection symmetries to the lines $\ell[y=-x+ap]$ for $a\in\{0,1,2,3\}$, i.e., the dotted lines orthogonal to $\ell_d^+$.
    Let $P\leteq (x,y)\in\ell_d^+\cap \cG$ such that $p\nmid x$ and $p\nmid y$. Then there is a unique $c\in\F_p^*$ for which $P\in S(c,p)$, i.e., $c\equiv xy\pmod{p}$. 
    To describe $S(c,p)$, we separate the following disjoint cases based on which block (from \autoref{def:blocks}) $P$ belongs to. 
    Let $L\leteq \bigcup_{a=0}^3 \ell[y=-x+ap]$, the union of the points of our dotted lines.

    \begin{itemize}
        \item If $P\in  \ell_d^+\cap ((D_{-1,0}\cup D_{0,1})\setminus L)$, then the partial reflection symmetries in the dotted lines give four points of $\ell_d^+\cap S(c,p)$ (including $P$): two from $D_{-1,0}$ and two from $D_{0,1}$ (indicated by 
        \tikz[baseline=-1mm] \node[rectangle, draw, fill=black, minimum size=8pt, inner sep=0pt] (X) {};
         in \autoref{fig:slope1_regimes}). 

         \item Similarly, if $P\in  \ell_d^+\cap ((A_{0,0}\cup A_{1,1})\setminus L)$, then the partial reflection symmetries yield again four points of $\ell_d^+\cap S(c,p)$: two from $A_{0,0}$ and two from $A_{1,1}$ (indicated by 
        \tikz[baseline=-1mm] \node[diamond, minimum width=14pt, minimum height=10pt, draw, fill=black!55!white, inner sep=0pt] {};
         in \autoref{fig:slope1_regimes}).

         \item If $P\in \ell_d^+\cap (B_{-1,0}\cup C_{0,1}\cup B_{0,1})$, then we obtain three points altogether by the partial reflection symmetries: one from each block $B_{-1,0}$, $C_{0,1}$ and $B_{0,1}$
         (indicated by 
        \tikz[baseline=-1mm] \node[regular polygon, regular polygon sides=3, draw, fill=black!20!white, minimum size=12pt, inner sep=0pt] {};
         in \autoref{fig:slope1_regimes}).

         \item Otherwise, $P\in \ell_d^+\cap L$, i.e., $P$ is on a dotted line. 
         Note that $|\ell_d^+\cap L|=2$ and $\ell_d^+\cap L\subseteq S(c,p)$. 
         More explicitly, depending on the parity of $d$, there are two cases. 
         Either one point is from $D_{-1,0}$ and the other is its reflection in $\ell[x=-y+p]$ and belongs to $D_{0,1}$ (as indicated by 
        \tikz[baseline=-1mm] \node[circle, draw, fill=black!5!white, minimum size=8pt, inner sep=0pt] {};
         in \autoref{fig:slope1_regimes}), 
         or there is one in $A_{0,0}$ and one in $A_{1,1}$, its reflection in $\ell[x=-y+2p]$.      
    \end{itemize}
    Since $\Cardinality{\ell_d^+\cap H(c,p)\cap ([a,a+p-1]\times [b,b+p-1])}\leq 2$ for any $(a,b)\in\Z^2$, the (iterated) reflections of $P$ give the full set $\ell_d^+\cap S(c,p)$ in all cases above. 
    Then a simple computation gives 
    \[|M_{j}(\ell_d^+)|=\begin{cases}
        0 \when j=1, \\
        \frac{1}{2} \Cardinality{\ell_d^+\cap L} = 1 \when j=2, \\
        \frac{1}{3}\Cardinality{\ell_d^+\cap (B_{-1,0}\cup C_{0,1}\cup B_{0,1})} = d-\frac{p+1}{2} \when j=3, \\
        \frac{1}{4}\left(\Cardinality{\ell_d^+\cap (D_{-1,0}\cup D_{0,1}\cup A_{0,0}\cup A_{1,1})}-\Cardinality{\ell_d^+\cap L}\right) = p-1-d \when j=4.
    \end{cases}\]
    Finally, for $j=0$, we have $\Cardinality{M_{0}(\ell_d^+)}=\Cardinality{\F_p^*}-\sum_{j=1}^4 \Cardinality{M_{j}(\ell_d^+)} = \frac{p-1}{2}$ as stated in the $D_2$ column of \autoref{tab:M4j}. 

    In the other cases, 
    $\Cardinality{M_{j}(\ell_d^+)}$ can be expressed as  
    \begin{center}
    \begin{tabular}{C||M{2.5cm}|M{2.5cm}|M{2.5cm}|M{2.5cm}}
        \Cardinality{M_{j}(\ell_d^+)}  & 
        d\in D_3 & 
        d\in D_4 & 
        d\in D_5 & 
        d\in D_6
        \\ \hline\hline
        j=1 &
        0 & 
        1 & 
        \Cardinality{\ell_d^+\cap B_{-1,1}}+1 &
        \Cardinality{\ell_d^+\cap B_{-1,1}}
        \\ \hline
        j=2 &
        0 &  
        \multicolumn{2}{C|}{\frac{1}{2}\left(\Cardinality{\ell_d^+\cap (D_{-1,1}\cup A_{0,1})}-1\right)}  &
        0
        \\ \hline
        j=3 & 
        \multicolumn{2}{C|}{\frac{1}{3}\Cardinality{\ell_d^+\cap (B_{-1,0}\cup C_{0,1}\cup B_{0,1})}} & 
        \multicolumn{2}{C}{0}
        \\ \hline
        j=4 &
        \multicolumn{4}{C}{0}
        \end{tabular}
        \end{center}
    in a similar fashion. 
    Computing these cardinalities carefully yields the stated values of \autoref{tab:M4j}.    
\end{proof}

Now, the asymptotics of the inner sum of \eqref{eq:E} can be determined.
\begin{lemma}[Inner sum]\label{lem:T0}
    Let $t,s\in \N$, 
    and pick $\tau\in T(t,s)$ from \autoref{def:DeltaT}. 
    Then as the odd prime $p\to\infty$, we have 
    \[
    \frac{1}{ \binom{p-1}{t}} \sum_{\ell\in\cL'} \prod_{j=0}^4\binom{|M_{j}(\ell)|}{\tau_j}
    = 
    4\Phi(\tau)\frac{\binom{t+1}{\tau_0}}{2^{t+1}(t+1)}\cdot p + \bigO(1),\]
    where 
    \[\Phi(\tau)\leteq \begin{cases}
       
        2 \when  \tau_1=\tau_2=\tau_4=0,\\
       1  \when \tau_1=\tau_2=0<\tau_4 \text {\  \ or \ \ }  \tau_1=\tau_4=0<\tau_2,\\
         0  &\text{else}.
    \end{cases}\]
\end{lemma}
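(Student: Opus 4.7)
The plan is to use the symmetries in \autoref{lem:M4j} to rewrite $\sum_{\ell\in\cL'}=4\sum_{d\in D^\top}$, then split $D^\top=D_2\sqcup D_3\sqcup D_4\sqcup D_5\sqcup D_6$ as in \autoref{def:D}, substitute the polynomial values of $|M_{4,j}(\ell_d^+)|$ from \autoref{tab:M4j}, and evaluate each inner sum via the Vandermonde-type identity $\sum_{e=0}^N\binom{e}{a}\binom{N-e}{b}=\binom{N+1}{a+b+1}$.

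Parts~(1) and~(2) of \autoref{lem:M4j} give fixed-point-free involutions $\ell_d^-\leftrightarrow \ell_{d-p}^+$ on $D^-$ and $\ell_d^+\leftrightarrow \ell_{p-d}^+$ on $D^+$, under which the product $\prod_j\binom{|M_{4,j}(\ell)|}{\tau_j}$ is invariant; these collapse $\sum_{\ell\in\cL'}$ to $4\sum_{d\in D^\top}$ exactly. The singleton $D_3=\{p\}$ contributes $O(1)$. I then claim that only $D_2$ and $D_4$ can yield a leading contribution of order $p^{t+1}$, for the following reason. On $D_5$ we have $|M_{4,3}|=|M_{4,4}|=0$, forcing $\tau_3=\tau_4=0$ for a nonzero summand; on $D_6$ we have $|M_{4,2}|=|M_{4,3}|=|M_{4,4}|=0$, forcing $\tau_2=\tau_3=\tau_4=0$. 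But the defining inequality $\Delta_s(\tau)>0$ of $T(t,s)$ rearranges to $2\tau_4+\tau_3>\tau_1+2\tau_0+s\geq 0$, which forbids $\tau_3=\tau_4=0$. Hence $D_5$ and $D_6$ contribute zero for every $\tau\in T(t,s)$.

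For $D_2$ and $D_4$, Vandermonde's identity applied to the polynomial factors in $d$ gives
\begin{align*}
\sum_{d\in D_2}\prod_j\binom{|M_{4,j}(\ell_d^+)|}{\tau_j}
&= \binom{\tfrac{p-1}{2}}{\tau_0}\binom{0}{\tau_1}\binom{1}{\tau_2}\binom{\tfrac{p-1}{2}}{\tau_3+\tau_4+1},\\
\sum_{d\in D_4}\prod_j\binom{|M_{4,j}(\ell_d^+)|}{\tau_j}
&= \binom{\tfrac{p-1}{2}}{\tau_0}\binom{1}{\tau_1}\binom{\tfrac{p-1}{2}}{\tau_2+\tau_3+1}\binom{0}{\tau_4}.
\end{align*}
The first reaches the leading order $p^{t+1}$ precisely when $\tau_1=\tau_2=0$; the only other non-vanishing scenario is $(\tau_1,\tau_2)=(0,1)$, in which the lower index $\tau_3+\tau_4+1$ drops to $t-\tau_0$, giving order $p^t$ and hence $O(1)$ after dividing by $\binom{p-1}{t}\sim p^t/t!$. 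Symmetrically, $D_4$ reaches $p^{t+1}$ precisely when $\tau_1=\tau_4=0$ and drops to $p^t$ in the other non-vanishing case $(\tau_1,\tau_4)=(1,0)$. Thus the number of leading-order contributors among $\{D_2,D_4\}$ equals $\Phi(\tau)$.

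Whenever a leading contribution occurs, the second binomial has lower index $t+1-\tau_0$ in both cases. Using $\binom{(p-1)/2}{k}=p^k/(2^k k!)+O(p^{k-1})$, $\binom{p-1}{t}=p^t/t!+O(p^{t-1})$, and the identity $\tfrac{t!}{\tau_0!(t+1-\tau_0)!}=\tfrac{\binom{t+1}{\tau_0}}{t+1}$, one such contribution gives
\[
\frac{\binom{(p-1)/2}{\tau_0}\binom{(p-1)/2}{t+1-\tau_0}}{\binom{p-1}{t}}=\frac{\binom{t+1}{\tau_0}}{2^{t+1}(t+1)}\cdot p+O(1).
\]
Multiplying by the factor $4$ from the $\cL'$-reduction and the number $\Phi(\tau)$ of contributing intervals yields the claimed asymptotic. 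The main obstacle is the careful bookkeeping of cases: one must verify that the sub-leading scenarios $(\tau_1,\tau_2)=(0,1)$ for $D_2$ and $(\tau_1,\tau_4)=(1,0)$ for $D_4$ indeed absorb into the $O(1)$ error, and that the exclusion via $T(t,s)$ really eliminates $D_5$ and $D_6$; after these verifications, the three values $\Phi(\tau)\in\{0,1,2\}$ correspond exactly to none, one, or both of the conditions $\{\tau_1=\tau_2=0\}$ and $\{\tau_1=\tau_4=0\}$ holding.
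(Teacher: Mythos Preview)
Your proof is correct and follows essentially the same route as the paper: reduce $\sum_{\ell\in\cL'}$ to $4\sum_{d\in D^\top}$ via \autoref{lem:M4j}, split into $D_2,\dots,D_6$, kill $D_5,D_6$ using $\Delta_s(\tau)>0\Rightarrow(\tau_3,\tau_4)\neq(0,0)$, evaluate $D_2,D_4$ via the Vandermonde convolution, and read off the leading coefficient. One tiny terminological slip: the map $\ell_d^-\leftrightarrow\ell_{d-p}^+$ is a bijection $D^-\to D^+$ rather than an involution, but the conclusion $\sum_{\ell\in\cL'}=4\sum_{d\in D^\top}$ is unaffected.
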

\begin{proof}
    Recall the intervals $D_m$ from \autoref{def:D}. 
	First, we claim that
	\begin{equation}
		K_m(\tau)\leteq \sum_{d\in D_m} \prod_{j=0}^4\binom{|M_{j}(\ell_d^+)|}{\tau_j}
		=  
		\begin{cases}
			\binom{(p-1)/2}{\tau_0}\binom{0}{\tau_1}\binom{1}{\tau_2}\binom{(p-1)/2}{\tau_3+\tau_4+1}
			\when m=2 \\
			\binom{(p-1)/2}{\tau_0}\binom{0}{\tau_1}\binom{0}{\tau_2}\binom{(p-1)/2}{\tau_3}\binom{0}{\tau_4}
			\when m=3 \\
			\binom{(p-1)/2}{\tau_0}\binom{1}{\tau_1}\binom{(p-1)/2}{\tau_2+\tau_3+1}\binom{0}{\tau_4}
			\when m=4 \\ 
			0
			\when m\in\{5,6\}.
		\end{cases}
		\label{eq:binom}
	\end{equation}
	Indeed, for $m\in\{2,4\}$, note that 
	for any $a,b,q\in \N$, we have $\sum_{c=0}^q \binom{c}{a}\binom{q-c}{b} = \binom{q+1}{a+b+1}$ as both sides express the number of subsets $Y\subseteq \{1,\dots,q+1\}$ of size $a+b+1$ 
	(and the left-hand side does so by grouping the cases when 
	$c+1\in Y$ with $\Cardinality{Y\cap \{1,\dots,c\}}=a$).
	This implies the claim for $m\in\{2,4\}$ using the values given by \autoref{tab:M4j}.
	The case $m=3$ is clear as $D_3$ is a singleton. 
	Finally, for $m\in\{5,6\}$, note that $2\tau_4+\tau_3\geq \Delta_s(\tau)>0$, so 
	$\tau_j>0$ for some $j\in\{3,4\}$, hence 
	$\binom{\cardinality{M_{j}(\ell_d^+)}}{\tau_j}=\binom{0}{\tau_j}=0$ for $d\in D_m$ by \autoref{tab:M4j}.

    Next, we claim that 
    \begin{equation}
        K_m(\tau) = \begin{cases}
        \displaystyle\frac{\binom{t+1}{\tau_0}}{2^{t+1} (t+1)!} p^{t+1} + \bigO(p^t)
            \when m\in\{2,4\} \text{ and } \tau_1=\tau_m=0\\
        \bigO(p^t) 
            \otherwise.
    \end{cases}
        \label{eq:Sm}
    \end{equation}
    Indeed, to start, observe that 
     \begin{equation}\binom{ap+b}{c}=\frac{1}{c!}\prod_{r=0}^{c-1} (ap+b-r)=\frac{a^c}{c!}p^c + \bigO(p^{c-1})\label{eq:binom-order}
     \end{equation} for fixed numbers $a,b,c\in \N$. 
    First consider the case $m=2$. 
    Now \eqref{eq:binom-order} with \eqref{eq:binom} shows that $K_2(\tau)$ is a polynomial in $p$ of degree $\tau_0+\tau_3+\tau_4+1\leq t+1$ as $t=\tau_0+\dots+\tau_4$. 
    Hence, this degree is $t+1$ if and only if $\tau_1=\tau_2=0$. 
    In this case, \eqref{eq:binom-order} shows that the leading term is 
    $$\frac{(p/2)^{\tau_0}}{\tau_0!} \frac{(p/2)^{\tau_3+\tau_4+1}}{(\tau_3+\tau_4+1)!}
    = \frac{\binom{t+1}{\tau_0}}{2^{t+1}(t+1)!}p^{t+1},$$ as claimed.  
    The case $m=4$ is similar. 
    If $m\in\{3,5,6\}$, then \eqref{eq:binom-order} shows that the degree of $K_m(\tau)$ in $p$ is at most $\tau_0+\tau_3\leq t$ as claimed.

    We are now ready to prove the statement. 
    Using \autoref{rem:L'} and \autoref{lem:M4j}, we have 
    \[
    \frac{1}{ \binom{p-1}{t}} \sum_{\ell\in\cL'} \prod_{j=0}^4\binom{|M_{j}(\ell)|}{\tau_j}
    = 
    \frac{4}{ \binom{p-1}{t}} 
    \sum_{m=2}^6
    K_m(\tau).
    \]
    Finally, 
    note that $\binom{p-1}{t}=\frac{1}{t!}p^t + \bigO(p^{t-1})$ by  \eqref{eq:binom-order}, 
    and $\sum_{m=2}^6 K_m(\tau) = 
    \Phi(\tau)\frac{\binom{t+1}{\tau_0}}{2^{t+1} (t+1)!} p^{t+1} + \bigO(p^t)$ by 
    \eqref{eq:Sm}.
    The statement follows.
\end{proof}

\subsection{Step 3: Determining the outer sum of \texorpdfstring{\eqref{eq:E}}{(\ref{eq:E})}}
\label{sec:step3}
Since the right-hand side of the identity of \autoref{lem:T0} depends heavily on $\tau_0$, the next definition and statement will be useful to determine the outer sum of \eqref{eq:E}.
\begin{definition}\label{def:Tr}
    For $t,s, r\in \N$ with $r\leq t$, define 
    $T_r(t,s)\leteq \{\tau\in T(t,s):\tau_0=r\}$ 
    where $T(t,s)$ is from \autoref{def:DeltaT}.
\end{definition}

\begin{lemma}[Outer sum]\label{lem:DeltaPhi}
	For any $t,s, r\in \N$ with $r\leq t$, 
    we have 
    \[\sum_{\tau\in T_r(t,s)} \Delta_s(\tau)\Phi(\tau)  = \begin{cases}
            \binom{2t-s+1-4r}{2}+(t-s-3r)
                \when 0\leq r\leq \frac{t-s}{3}, \\
            \binom{2t-s+1-4r}{2} 
                \when \frac{t-s}{3} < r \leq \frac{2t-s}{4}, \\
            0
                \when \frac{2t-s}{4} < r\leq t,
    \end{cases}\]
   where 
    $\Delta_s(\tau)$ is from \autoref{def:DeltaT}, 
    and $\Phi(\tau)$ is from \autoref{lem:T0}. 
\end{lemma}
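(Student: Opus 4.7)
The plan is to split the sum according to the support of $\Phi$. Since $\Phi(\tau)$ vanishes unless $\tau_1=0$ and at most one of $\tau_2,\tau_4$ is positive, the nonzero summands fall into three disjoint cases (with $\tau_0=r$ fixed throughout): case (a), $\tau_1=\tau_2=\tau_4=0$ with $\Phi(\tau)=2$, which forces $\tau_3=t-r$ and $\Delta_s(\tau) = t-3r-s$; case (b), $\tau_1=\tau_2=0<\tau_4$ with $\Phi(\tau)=1$, parameterised by $\tau_4\in\{1,\dots,t-r\}$ and $\tau_3=t-r-\tau_4$, giving $\Delta_s(\tau)=\tau_4+(t-3r-s)$; and case (c), $\tau_1=\tau_4=0<\tau_2$ with $\Phi(\tau)=1$, parameterised by $\tau_2\in\{1,\dots,t-r\}$ and $\tau_3=t-r-\tau_2$, giving $\Delta_s(\tau) = \tau_3-2r-s$.

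For each case, truncate to the condition $\Delta_s(\tau)>0$ and evaluate the resulting arithmetic progression via $\sum_{u=1}^{N}u=\binom{N+1}{2}$. Writing $y=t-3r-s$ and $Y=2t-s+1-4r$ for brevity, case (a) contributes $2y$ when $y\geq 1$ and $0$ otherwise; case (c) contributes $\binom{y}{2}$, which is $0$ unless $y\geq 2$; and case (b), after the substitution $v=\tau_4+y$ (so that $v$ ends at $N=2t-4r-s$), contributes $\sum_{v=\max(y+1,\,1)}^{2t-4r-s}v$, which evaluates to $\binom{Y}{2}-\binom{y+1}{2}$ when $y\geq 0$ and to $\binom{Y}{2}$ when $y\leq -1$. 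In both regimes, $\binom{Y}{2}$ vanishes automatically once $r>(2t-s)/4$.

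Adding the three contributions in the regime $y\geq 1$ (equivalently $r<(t-s)/3$), the telescoping identity $\binom{y+1}{2}-\binom{y}{2}=y$ collapses the piece $2y-\binom{y+1}{2}+\binom{y}{2}$ to $y$, yielding the claimed value $\binom{Y}{2}+(t-s-3r)$. For the boundary $y=0$ (when $3\mid t-s$ and $r=(t-s)/3$) and the regime $y\leq -1$ (i.e., $r>(t-s)/3$), only case (b) contributes nontrivially, leaving $\binom{Y}{2}$ alone; and for $r>(2t-s)/4$ this binomial is zero as well.

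The only real obstacle is careful bookkeeping at the two transition thresholds $r=(t-s)/3$ and $r=(2t-s)/4$: one must verify that the piecewise formulas agree across the boundaries and that the degenerate binomials $\binom{0}{2}=\binom{1}{2}=0$ correctly absorb the boundary terms of each sub-case. The substitution $v=\tau_4+y$ in case (b) is the decisive step, as it exposes the single binomial $\binom{Y}{2}$ that dominates the final expression and matches the statement verbatim.
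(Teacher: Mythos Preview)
Your proof is correct and follows essentially the same strategy as the paper: restrict to $\tau$ with $\tau_1=0$ and at most one of $\tau_2,\tau_4$ positive, parameterise these, and sum the resulting arithmetic progressions. The only difference is organisational: the paper groups the contributing $\tau$ into the two \emph{overlapping} sets $I_2=\{\tau_1=\tau_2=0\}$ and $I_4=\{\tau_1=\tau_4=0\}$, so that summing $\Delta_s$ over each separately automatically double-counts the intersection $\{\tau_1=\tau_2=\tau_4=0\}$ with the correct weight $\Phi=2$; this lets the two sums combine directly into $\sum_{d=1}^{B}d + A$ without the telescoping step $2y-\binom{y+1}{2}+\binom{y}{2}=y$ that you carry out explicitly. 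Your three-case split (a), (b), (c) is the disjoint version of the same decomposition and leads to the identical answer.
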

\begin{proof}
For computational simplicity, we use the variables $A\leteq t-s-3r$, $B\leteq 2t-s-4r$ to replace $s$ and $t$ in the statement. 
Note that $0\leq s+2r=B - 2 A$ and $0\leq t-r=B-A$ by assumption. 
We write the index set $T_r(t,s)$ of the summation as a union of two index sets $I_2$ and $I_4$, which we define and compute below.
First, let 
\begin{align*}
    \!I_2&\!\leteq \{\tau\in T_r(t,s):\tau_1=\tau_2=0\} 
    \\&= \{(r,0,0,\tau_3,\tau_4)\in \N^5:r+\tau_3+\tau_4=t, ~2\tau_4+\tau_3-2r-s>0\}
        && \text{using \autoref{def:Tr},}
    \\&= \{(r,0,0,\tau_3,\tau_4)\in \N^5:\tau_3+\tau_4=B-A, ~2\tau_4+\tau_3+2A-B>0\}
        && \text{using the definition of $A$, $B$,}
    \\&= \{(r,0,0,B-d,d-A)\in\Z^5: B-d\geq 0, ~d-A\geq 0, ~d>0\} 
    && \text{$d\leteq 2\tau_4+\tau_3+2A-B$,}
    \\&= \{(r,0,0,B-d,d-A)\in\N^5: \max\{1,A\}\leq d\leq  B\} 
\intertext{
after noting that $(\tau_3, \tau_4)=(B-d,~d-A) \iff (\tau_3+\tau_4, ~2\tau_4+\tau_3+2A-B) =(B-A, d)$. 
Similarly,  let 
}
    \!I_4&\!\leteq \{\tau\in T_r(t,s):\tau_1=\tau_4=0\} 
    \\&= \{(r,0,\tau_2,\tau_3,0)\in \N^5:r+\tau_2+\tau_3=t, ~\tau_3-2r-s>0\}
        && \text{using \autoref{def:Tr},}
    \\&= \{(r,0,\tau_2,\tau_3,0)\in \N^5:\tau_2+\tau_3=B-A, ~\tau_3+2A-B>0\}
        && \text{using the definitions of $A$, $B$,}
    \\&= \{(r,0,A-d,B-2A+d,0)\in\Z^5: A-d\geq 0, ~B-2A+d\geq 0, ~d>0\} 
        && \text{$d\leteq \tau_3+2A-B,$}
    \\&= \{(r,0,A-d, B-2A+d,0)\in\N^5:1\leq d\leq A\} 
        && \text{using } B-2A\geq 0,
\end{align*}
as  $(\tau_2,\tau_3) = (A-d, ~B-2A+d) \iff (\tau_2+\tau_3, ~\tau_3+2A-B )=(B-A,d)$.
Note that for every $\tau\in I_2\cup I_4$, we have $\Delta_s(\tau) = d$ by definition.
Note that the definition of $\Phi(\tau)$ from \autoref{lem:T0} 
translates to $\Phi(\tau)=2\iff \tau\in I_2\cap I_4$ and to 
$\Phi(\tau)=1\iff \tau \in (I_2\cup I_4)\setminus (I_2\cap I_4)$. 
Thus the sum from the statement can be expressed as  
\[
\sum_{\tau\in T_r(t,s)} \Delta_s(\tau)\Phi(\tau)=
\sum_{\tau\in I_2}\Delta_s(\tau) + \sum_{\tau\in I_4}\Delta_s(\tau) 
= \sum_{d=\max\{1,A\}}^{B} \hspace{-4mm}d\hspace{1mm} +  \sum_{d=1}^{A} d 
=\begin{cases}
\displaystyle A+\sum_{d=1}^B d 
    \when 0\leq A, \\
\displaystyle \sum_{d=1}^B d 
    \when A<0.
\end{cases}
\]
Note that $0\leq A$ if and only if $r\leq \frac{t-s}{3}$. 
Finally, $\sum_{d=1}^B d=\binom{B+1}{2}$ if $0\leq B$, i.e. when $r\leq \frac{2t-s}{4}$, otherwise this sum is $0$. 
Then the statement follows.
\end{proof}

\subsection{Step 4: Explicit formulae for the expected value \texorpdfstring{$\EE(X_{k,p}\mid A_t)$}{E(X{k,p} | At)}}
\label{sec:step4}

We can now prove the first key statement of \autoref{sec:constructionGeneral} about the asymptotic value of $\EE(X_{k,p}\mid A_t)$.
\begin{proof}[Proof of \autoref{prop:key}]
Using the results of the previous subsections, we have 
\begin{align*}
    \EE(X_{k,p}\mid A_t) 
    &= \frac{1}{\binom{p-1}{t}} 
    \sum_{\tau\in T(t,s)}
    \Delta_s(\tau) 
    \sum_{\ell\in\cL'}
    \prod_{j=0}^4\binom{\Cardinality{M_{j}(\ell)}}{\tau_{j}} && \text{using \eqref{eq:E},}
    \\&=  
    \sum_{\tau\in T(t,s)}
    \Delta_s(\tau) 
    \cdot \left(4\Phi(\tau)\frac{\binom{t+1}{\tau_0}}{2^{t+1}(t+1)}\cdot p + \bigO(1)\right) 
        && \text{using \autoref{lem:T0}, }
    \\&=  
    \left(\frac{2}{2^{t}(t+1)}\sum_{r=0}^t \binom{t+1}{r}
    \sum_{\tau\in T_r(t,s)}
    \Delta_s(\tau) \Phi(\tau)\right)\cdot p + \bigO(1) 
        && \text{using \autoref{def:Tr},}
    \\&= F_{t,s}\cdot p + \bigO(1) 
        && \text{using \autoref{lem:DeltaPhi} and \eqref{eq:Fdef}}
\end{align*}
as stated.
\end{proof}

To find an upper bound for $F_{t,s}$ from \eqref{eq:Fdef} and prove \autoref{prop:EUpperbound} in the next subsection, 
the following binomial identity will be useful.

\begin{lemma}[A more explicit form]\label{moreex}
    Let $t,s\in \N$.  Write $a\leteq \left\lfloor\frac{t-s}{3}\right\rfloor\leq \left\lfloor\frac{2t-s}{4}\right\rfloor\eqlet b$.  
    Then $F_{t,s}$ from \eqref{eq:Fdef} has the following alternative form.
    \begin{equation}
    \begin{aligned}
         F_{t,s}= 4+\frac{(s+1)(s+2)}{t+1} 
          + \left(2-\frac{2(s+1)}{t+1}\right)&\cdot \frac{1}{2^{t}}\binom{t}{a}
         -\left(2+\frac{4(s+1)}{t+1} \right) \sum_{r=0}^{a-1} \frac{1}{2^{t}}\binom{t}{r}  
         {+} \\
         {+} \Big(  4(t-s-2b-1) -2 \Big) &\cdot \frac{1}{2^{t}} \binom{t}{b}
         - \left(4+\frac{(s+1)(s+2)}{t+1}\right)\sum_{r=b+1}^{t-b} \frac{1}{2^{t}}\binom{t}{r} 
    \end{aligned}
    \label{eq:Fsimplified}
    \end{equation}
    In particular, 
    writing $\indicator{2\nmid t}\leteq 1$ if $t$ is odd, and $\indicator{2\nmid t}\leteq 0$ if $t$ is even, we have 
    \begin{align}
        F_{t,0} &= 
        4 
        -\frac{6+\frac{2}{t+1}\indicator{2\nmid t}}{2^t}\binom{t}{\left\lfloor\frac{t}{2}\right\rfloor}
        +\frac{2}{t+1}
        + \frac{2-\frac{2}{t+1}}{2^t}\binom{t}{\left\lfloor\frac{t}{3}\right\rfloor}
        -\frac{2+\frac{4}{t+1}}{2^t}\sum_{r=0}^{\left\lfloor\frac{t}{3}\right\rfloor-1} \binom{t}{r},
        \label{eq:F0}
        \\
        F_{t,1} &= 
        4
        -\frac{10+\frac{6}{t+1}\indicator{2\nmid t}}{2^t}\binom{t}{\left\lfloor\frac{t}{2}\right\rfloor}
        +\frac{6}{t+1}
        + \frac{2-\frac{4}{t+1}}{2^t}\binom{t}{\left\lfloor\frac{t-1}{3}\right\rfloor}
        -\frac{2+\frac{8}{t+1}}{2^t}\sum_{r=0}^{\left\lfloor\frac{t-1}{3}\right\rfloor-1} \binom{t}{r}.
        \label{eq:F1}
    \end{align}
\end{lemma}

\begin{remark}\label{rem:negBinom}
    By convention, we use the following extension of the function $\binom{x}{r}$ for \textit{integer} values of $x$ and $r$:
    \begin{enumerate}[label=(\alph*)]
        \item $\binom{x}{r}=\frac{x(x-1)\ldots(x-r+1)}{r!}$ if $r\ge 0$,
        \item $\binom{x}{r}=0$ if $r<0$.
    \end{enumerate} 
\end{remark}

\begin{proof}
    We claim that for any $t,s\in\Z$ and $\alpha\in\Z$, we have   
    \[\sum _{r=0}^\alpha (t-s-3r)\binom{t+1}{r}  = (t-s) \binom{t}{\alpha}-(t+2 s+3) \sum _{r=0}^{\alpha-1} \binom{t}{r}.\]
    This is true for $\alpha<0$ by \autoref{rem:negBinom}(b), and for $\alpha\in\N$, it can be verified by induction on $\alpha$ using the binomial identities 
    $\frac{t+1}{\alpha+1}\binom{t}{\alpha}=\binom{t+1}{\alpha+1}=\binom{t}{\alpha}+\binom{t}{\alpha+1}$. 
Similarly, for any $t\in\N$ and $s,\beta\in\Z$ with $\beta<\frac{t+1}{2}$, we claim that 
    \[\sum_{r=0}^{\beta} 
        \binom{2t-s+1-4r}{2} \binom{t+1}{r}  
        = \left(2 t+2 + \frac{(s+1)(s+2)}{2}\right) \left(2^t-\sum _{\mathclap{r=\beta+1}}^{t-\beta} \binom{t}{r}\right) 
        +(t+1)(2t-2s-4\beta-3) \binom{t}{\beta}\]
        holds by \autoref{rem:negBinom}(b) and by induction on $\beta$ using the identities 
$\binom{x}{2}=\frac{x(x-1)}{2}$ (for any $x\in\Z$), 
$\frac{t+1}{\beta+1}\binom{t}{\beta}=\binom{t+1}{\beta+1}=\binom{t}{\beta}+\binom{t}{\beta+1}$ for $\beta\ne -1$, and 
$\binom{t}{\beta}=\binom{t}{t-\beta}$ for $t\ge 0$. 
Apply these two claims with $\alpha\leteq a=\left\lfloor\frac{t-s}{3}\right\rfloor$, 
and $\beta\leteq b=\left\lfloor\frac{2t-s}{4}\right\rfloor$ to \eqref{eq:Fdef}
after noting that $\beta<\frac{t+1}{2}$ as $s\geq 0$. 
Substituting back to \eqref{eq:Fdef}, a short computation produces \eqref{eq:Fsimplified} as stated.

    We obtain \eqref{eq:F0} and \eqref{eq:F1} by 
    simplifying \eqref{eq:Fsimplified} in the case $s\in\{0,1\}$. 
    More concretely, we need to evaluate the given weighted sum of the binomial coefficients $\binom{t}{r}$ for $b\leq r\leq t-b$.
    Depending on the parity of $t$ and the value of $s$, we can express this set as follows.
    \[\{r\in\N:b\leq r\leq t-b\}=\begin{cases}
        \{b\}
        %=\left\{\left\lfloor\frac{t}{2}\right\rfloor\right\} 
        = \left\{\frac{t}{2}\right\}
            \when s=0,\quad 2\mid t
        \\ 
        \{b, b+1\}
        %=\left\{\left\lfloor\frac{t}{2}\right\rfloor, \left\lfloor\frac{t}{2}\right\rfloor+1\right\} 
        = \left\{\frac{t-1}{2}, \frac{t+1}{2}\right\}
            \when s=0,\quad 2\nmid t
        \\
        \{b,b+1,b+2\}
        %=\left\{\left\lfloor\frac{t}{2}\right\rfloor-1, \left\lfloor\frac{t}{2}\right\rfloor, \left\lfloor\frac{t}{2}\right\rfloor+1\right\} 
        = \left\{\frac{t}{2}-1, \frac{t}{2}, \frac{t}{2}+1 \right\}
            \when s=1,\quad 2\mid t
        \\
        \{b,b+1\}
        %=\left\{\left\lfloor\frac{t}{2}\right\rfloor, \left\lfloor\frac{t}{2}\right\rfloor+1\right\} 
        = \left\{\frac{t-1}{2}, \frac{t+1}{2}\right\}
            \when s=1,\quad 2\nmid t\\
    \end{cases}\]
    Note that all of the resulting binomial coefficients can be expressed as a multiple of $\binom{t}{\lfloor t/2\rfloor}$ using standard binomial identities: 
    for $2\mid t$, we have 
    $\binom{t}{t/2}=\binom{t}{\lfloor t/2\rfloor}$, 
    $\binom{t}{(t/2)-1}= \binom{t}{(t/2)+1} = \frac{t}{t+2}\binom{t}{\lfloor t/2\rfloor}$, 
    whereas for $2\nmid t$, we have 
    $\binom{t}{(t-1)/2}=\binom{t}{(t+1)/2}=\binom{t}{\lfloor t/2\rfloor}$.
    A straightforward simplification of \eqref{eq:Fsimplified} using the observations above yield \eqref{eq:F0} and \eqref{eq:F1} as stated. 
\end{proof}

\subsection{Step 5: Determining an upper bound on \texorpdfstring{$F_{t,s}$}{F{t,s}} -- Proof of \autoref{prop:EUpperbound}}
\label{sec:step5}

The final ingredients are some estimates for the following special binomial coefficients.
\begin{lemma}\label{lem:binomBounds}
    For every $t\in\N$, we have 
    \[\frac{1}{\sqrt{\frac{\pi}{2}t + 4- \frac{\pi}{2}}}\leq \frac{1}{2^t}\binom{t}{\lfloor t/2\rfloor} \qquad \text{and}\qquad 
    \frac{1}{2^t}\binom{t}{\lfloor t/3\rfloor} \leq 3\cdot 0.95^t.\]
\end{lemma}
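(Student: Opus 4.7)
My plan is to handle the two inequalities separately. The second one is straightforward via the entropy bound; the first is delicate and needs a parity split together with Stirling-type analysis.

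\textbf{The second inequality.} I apply the standard entropy bound $\binom{t}{k}\le 2^{tH(k/t)}$, where $H(x)=-x\log_2 x-(1-x)\log_2(1-x)$ is the binary entropy function. Since $H$ is increasing on $[0,1/2]$ and $\lfloor t/3\rfloor/t\le 1/3\le 1/2$, setting $k=\lfloor t/3\rfloor$ yields
\[\frac{1}{2^t}\binom{t}{\lfloor t/3\rfloor}\le 2^{t(H(1/3)-1)}=\left(\frac{3}{2^{5/3}}\right)^t.\]
A direct computation gives $3/2^{5/3}\approx 0.9449<0.95$, so the right-hand side is bounded above by $0.95^t\le 3\cdot 0.95^t$, with the factor $3$ being generous slack.

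\textbf{The first inequality.} Equality holds at $t=1$ (both sides equal $1/2$), so the constant $4-\pi/2$ is pinned down exactly and essentially no slack can be wasted. My plan is to split by the parity of $t$ and write $P_n\coloneqq \binom{2n}{n}^2/16^n=\prod_{k=1}^n((2k-1)/(2k))^2$. For even $t=2n$ I would use the Wallis product: the partial products $W_n\coloneqq \prod_{k=1}^n (2k)^2/((2k-1)(2k+1))$ form an increasing sequence with limit $\pi/2$, so $W_n\le \pi/2$; combined with the telescoping identity $(2n+1)P_n=1/W_n$, this gives $P_n\ge 2/(\pi(2n+1))=1/(\pi n+\pi/2)$, and since $\pi\le 4$ yields $\pi/2\le 4-\pi/2$, the even case is complete. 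For odd $t=2n+1$ I would use the identity $\binom{2n+1}{n}/2^{2n+1}=\binom{2n+2}{n+1}/2^{2n+2}$ to reduce the claim to showing $P_m\ge 1/(\pi m+4-\pi)$ for $m=n+1\ge 1$.

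The main obstacle is precisely in this odd case: $4-\pi\approx 0.858$ is well below the value $\pi/2\approx 1.571$ supplied by the Wallis bound, so a sharper estimate is required. To close the gap I would invoke Stirling's formula with Robbins' explicit error bounds
\[\sqrt{2\pi n}\,(n/e)^n e^{1/(12n+1)}\le n!\le \sqrt{2\pi n}\,(n/e)^n e^{1/(12n)},\]
which leads to the asymptotic expansion $P_m-1/(\pi m+4-\pi)=(16-5\pi)/(4\pi^2 m^2)+O(m^{-3})$. Since $16-5\pi>0$, the inequality holds with a positive margin of order $m^{-2}$ above a computable threshold, and the remaining small values of $m$ --- starting from the base case $m=1$ at which equality holds --- are verified by direct numerical computation.
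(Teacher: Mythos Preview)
Your proposal is correct in its essentials, and in places takes a genuinely different---and cleaner---route than the paper.

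For the second inequality, your entropy-bound argument is strictly simpler than the paper's. The paper splits off $t\le 2$ by hand, writes $m=\lfloor t/3\rfloor$, bounds $\frac{1}{2^t}\binom{t}{\lfloor t/3\rfloor}\le\frac{9}{4\cdot 8^m}\binom{3m}{m}$, and then applies Stirling with Robbins' error terms to get $3(27/32)^{t/3}<3\cdot 0.95^t$. Your one-line entropy bound yields $(3/2^{5/3})^t$ directly, which is sharper and avoids both the case analysis and the Stirling machinery.

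For the first inequality, the paper unifies both parities via $m=\lceil t/2\rceil$, so that $\frac{1}{2^t}\binom{t}{\lfloor t/2\rfloor}=\frac{1}{4^m}\binom{2m}{m}$ holds uniformly; it then applies Stirling--Robbins to show $\frac{1}{4^m}\binom{2m}{m}\ge(\pi m+4-\pi)^{-1/2}$ for $m\ge 4$, and checks $t\le 7$ directly. Your parity split is organised differently: the even case via the Wallis-product inequality $W_n\le\pi/2$ is more elementary (no Stirling needed there), while your odd case is essentially the same problem the paper solves---showing $P_m\ge(\pi m+4-\pi)^{-1}$ via Robbins' bounds plus a finite check. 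So the two approaches converge on the hard sub-case; the paper's unification saves a little bookkeeping, while your Wallis argument for even $t$ is a nice self-contained gem. Your asymptotic computation of the $(16-5\pi)/(4\pi^2 m^2)$ margin is correct, and turning it into an effective threshold via Robbins is straightforward (the paper finds $m\ge 4$ suffices), though you have left that step as a sketch.
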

\begin{proof}
    For both statements, we use the following variant of Stirling's formula \cite{Stirling}: 
    for any $m\in\zz^{+}$, we have $m!=\sqrt{2\pi m}\left(\frac{m}{e}\right)^m \exp(r_m)$ where $r_m$ satisfies $\frac{1}{12m+1}<r_m<\frac{1}{12m}$.

    Consider the first statement. One can check that it holds for $t\in\{0,1,\dots,7\}$. 
    Thus we let $m\leteq \lceil t/2\rceil\geq 4$. Now we have 
   \begin{align*}
   \frac{1}{2^t}\binom{t}{\lfloor t/2\rfloor}
    &=\frac{1}{4^m}\binom{2m}{m}
        && \text{as for odd $t$, $\binom{t}{\lfloor t/2\rfloor}=\binom{2m-1}{m-1}=\frac{1}{2}\binom{2m}{m}$, }
    \\&= \sqrt{\frac{\exp(2r_{2m}-4r_m)}{\pi m}}
        && \text{by Stirling's formula,}
    \\&\geq \sqrt{\frac{1+2r_{2m}-4r_m}{\pi m}}
        &&\text{as $\exp(x)\geq 1+x$ for every $x\in \R$,}
    \\&> \sqrt{\frac{1+\frac{2}{25m}-\frac{4}{12m}}{\pi m}} 
        && \text{as $\tfrac{1}{25m}<\tfrac{1}{12\cdot 2m+1}<r_{2m}$ and $r_m<\tfrac{1}{12m}$, }
    \\&\geq  \frac{1}{\sqrt{\pi (m-1) + 4}}
        && \text{equivalently, $m\geq \tfrac{76-19 \pi}{300-94 \pi}\approx 3.5$,}
    \\&\geq \frac{1}{\sqrt{\pi \frac{t-1}{2} + 4}}
        && \text{as $m=\lceil t/2\rceil\leq \tfrac{t+1}{2}$.}
    \end{align*}
Note that the penultimate inequality is equivalent to $(300-94\pi)m -(76-19 \pi)\geq 0$ after squaring both sides and multiplying by the common  denominator.

Consider the second part. It is true for $t\in\{0,1,2\}$. 
Otherwise, for $m\leteq \lfloor t/3\rfloor \geq 1$, and in a similar fashion, using $0<\frac{1}{12m+1}<r_m<\frac{1}{12m}$, 
we have 
\begin{align*}
    \frac{1}{2^t}\binom{t}{\lfloor t/3 \rfloor} 
    &\leq \frac{1}{2^{3m}}\binom{3m+2}{m} 
    = \frac{1}{8^m} \frac{(3m+2)(3m+1)}{(2m+2)(2m+1)}\binom{3m}{m}
    \leq \frac{1}{8^m}\cdot  \frac{9}{4} \binom{3m}{m}
    \\&=   \frac{9\sqrt{3}}{8\sqrt{\pi m} } \exp(r_{3m}-r_{2m}-r_m) \left(\frac{27}{32}\right)^m 
    \\&< \underbrace{\frac{9\sqrt{3}}{8\sqrt{\pi m} }\exp\left(\frac{1}{12\cdot 3m}\right)}_{g(m)} \cdot \left(\frac{27}{32}\right)^m 
    \le g(1) \cdot \left(\frac{27}{32}\right)^m 
    < 3\cdot \left(\frac{27}{32}\right)^{t/3} 
    <  3\cdot 0.95^t,
\end{align*}
since the function $g(m)$ is strictly decreasing for $m>0$.
\end{proof}

Finally, 
we can now prove the second key statement of \autoref{sec:constructionGeneral} about bounding the asymptotic value of $\EE(X_{k,p}\mid A_t)$.

\begin{proof}[Proof of \autoref{prop:EUpperbound}]
     The formula of \eqref{eq:Fdef} shows that $s\mapsto F_{t,s}$ is a decreasing function. 

     Now consider the upper bound for $F_{t,0}$ from \eqref{eq:EUpperBound}. Using \eqref{eq:F0}, a computer verifies the claim for  $t< 300$, 
     so assume $t\geq 300$. 
     Now the function $g\from \R\to \R,x\mapsto \left(\frac{\pi}{2}t+x\right)^{-1/2}$ is strictly decreasing and convex, so $g(c)\geq g(d)-g'(d)\cdot (d-c)$ 
     for $c\leteq 4-\frac{\pi}{2}<\tfrac{7}{2} \eqlet d$. 
     Note that $-g'(d)\cdot (d-c)=\frac{d-c}{2}\left(\frac{\pi}{2}t+d\right)^{-3/2}\geq 3\cdot 0.95^t$ because $t\geq 300$. 
    Thus 
     \begin{align*}
     F_{t,0} 
     &< 4 + \frac{2}{t+1}-\frac{6}{2^t}\binom{t}{\lfloor t/2\rfloor} + \frac{6}{2^t}\binom{t}{\lfloor t/3\rfloor}
        && \text{using \eqref{eq:F0},}
     \\&\leq 4 + \frac{2}{t+1}- 6\cdot \left(g(c)-3\cdot 0.95^t\right)
        && \text{using \autoref{lem:binomBounds},}
     \\&\leq 4 + \frac{2}{t+1}- 6\cdot\left(g(d)-g'(d)\cdot (d-c)-3\cdot 0.95^t\right)
        && \text{using convexity,}
     \\&\leq 4 + \frac{2}{t+1}- 6\cdot g(d)
        && \text{as $t\geq 300$,}
        \\&= 4 - \frac{12}{\sqrt{2\pi t+14}} + \frac{2}{t+1},
     \end{align*}
     as stated.

     The upper bound for $F_{t,1}$ from \eqref{eq:EUpperBound} follows similarly using \eqref{eq:F1} 
     by separating the cases $t<300$ and $t\geq 300$ 
     and taking $d=\frac{\pi}{4}+\tfrac{7}{4}$. 
\end{proof}

\section{A better  construction for \texorpdfstring{$k=3$}{k=3}}
\label{sec:k=3}

In this section, we sketch the  proof of  \autoref{k=3}, which is a lower bound on $f_3(n)$ improving \autoref{thm:pversion_34}. The full proof is carried out in \autoref{sec:full_k=3}.

The idea is similar to the HJSW construction, but this time we use a grid of size $3p\times 3p$ (where $p\ge 3$ is a prime), hence consisting of $6\times 6$ blocks of normal points. Recall that a point $(x,y)\in \zz\times \zz$ is \textit{normal} if $p\nmid x$ and $p\nmid y$.  Again we will take the points of the modular hyperbola $H(c,p)$ that lie in certain areas of the grid.

The \textit{mod $p$ parity} of an integer $z$ refers to the parity of the unique integer $z_0\in [0, p-1]$ such that $z\equiv z_0\pmod{p}$. 

In order to define the point set, we define new pairing maps between mod $p$ congruence classes that relate to lines of modulus $2$.

Recall from \autoref{obs:meet_classes} that if a line of slope $+1$ met $H(c,p)$ in two congruence classes, then these classes were $(x,y)$ and $(-y,-x)$ for some $x,y\in \F_p^{*}$, and for lines of slope $-1$ we had $(x,y)$ and $(y,x)$. Now we make similar observations for modulus $2$ lines, that is, lines of slope $\pm 2$ or $\pm \frac12$.

\begin{observation}\label{obs:meet_classes_2}
	Let $\ell$ be a line of modulus $2$. If $\ell$ meets $H(c,p)$ in two congruence classes, with one of them being $(x,y)$ for some $x,y\in \F_p^{*}$, then depending on whether $\ell$ has slope $+2$, $-2$, $+\frac12$, or $-\frac12$, the other congruence class must be $\left(-\frac{y}{2}, -2x\right)$, $\left(\frac{y}{2}, 2x\right)$, $\left(-2y, -\frac{x}{2}\right)$, or $\left(2y, \frac{x}{2}\right)$ respectively.
\end{observation}
\begin{proof}
	This can be obtained by considering the quadratic equation that describes the intersections of the modular hyperbola $xy=c$ and the line $y=mx+d$ over $\F_p$ for $m\in \left\{\pm 2, \pm \frac12\right\}$.
\end{proof}

When $\ell$ meets $H(c,p)$ in just one congruence class, the two mentioned congruence classes must coincide (e.g., we have $y=-2x$ in the slope $+2$ case).

For the slope $+2$ case, \autoref{fig:class_pairing_slope2} gives a picture of the transformation $(x,y)\mapsto \left(-\frac{y}{2}, -2x\right)$ for $p=13$. In this illustration, we depict the images of four out of the eight subsets obtained by partitioning the normal points of $\F_p^2$ based on their congruence class location ($A$, $B$, $C$, $D$) and their $y$-coordinate's mod $p$ parity. Mappings for the other slopes are similar in nature, with full columns mapped to full rows. However, for slopes $\pm \frac12$, the structure is based on the mod $p$ parity of the $x$-coordinate instead of that of $y$.

\begin{figure}[htb!]
	\centering
	
	\definecolor{ffqqqq}{rgb}{1.,0.,0.}
\definecolor{qqqqff}{rgb}{0.,0.,1.}
\definecolor{ffqqff}{rgb}{1.,0.,1.}
\definecolor{qqffqq}{rgb}{0.,1.,0.}
\definecolor{lightgggrey}{rgb}{0.7,0.7,0.7}
\definecolor{turquoise}{rgb}{0.,1.,1.}
\begin{minipage}{0.47\textwidth}
\centering
\begin{tikzpicture}[line cap=round,line join=round,>=triangle 45,x=0.5cm,y=0.5cm]
\clip(-1,-1) rectangle (13.5,13.5);
\draw [line width=2.pt] (0.,13.)-- (13.,13.);
\draw [line width=2.pt] (13.,0.)-- (13.,13.);
\draw [line width=2.pt] (0.,13.)-- (0.,0.);
\draw [line width=2.pt] (0.,0.)-- (13.,0.);
\begin{scriptsize}
\foreach \i in {0,...,13}
{
\draw (\i,-0.5) node[anchor=center] {$\i$};
\draw (-0.5,\i) node[anchor=center] {$\i$};
}
\foreach \i in {1,...,6}
{
\draw [fill=ffqqff] (\i,1.) circle (3.45pt);
\draw [fill=qqffqq] (\i,2.) circle (3.45pt);
\draw [fill=ffqqff] (\i,3.) circle (3.45pt);
\draw [fill=qqffqq] (\i,4.) circle (3.45pt);
\draw [fill=ffqqff] (\i,5.) circle (3.45pt);
\draw [fill=qqffqq] (\i,6.) circle (3.45pt);
\draw [fill=turquoise] (\i,7.) circle (3.45pt);
\draw [fill=ffqqqq] (\i,8.) circle (3.45pt);
\draw [fill=turquoise] (\i,9.) circle (3.45pt);
\draw [fill=ffqqqq] (\i,10.) circle (3.45pt);
\draw [fill=turquoise] (\i,11.) circle (3.45pt);
\draw [fill=ffqqqq] (\i,12.) circle (3.45pt);
}
%\foreach \i in {7,...,12}
%{
%\draw [fill=lightgggrey] (\i,1.) circle (3.45pt);
%\draw [fill=violet] (\i,2.) circle (3.45pt);
%\draw [fill=lightgggrey] (\i,3.) circle (3.45pt);
%\draw [fill=violet] (\i,4.) circle (3.45pt);
%\draw [fill=lightgggrey] (\i,5.) circle (3.45pt);
%\draw [fill=violet] (\i,6.) circle (3.45pt);
%\draw [fill=orange] (\i,7.) circle (3.45pt);
%\draw [fill=yellow] (\i,8.) circle (3.45pt);
%\draw [fill=orange] (\i,9.) circle (3.45pt);
%\draw [fill=yellow] (\i,10.) circle (3.45pt);
%\draw [fill=orange] (\i,11.) circle (3.45pt);
%\draw [fill=yellow] (\i,12.) circle (3.45pt);
%}
\draw (0.8,12) node[anchor=east] {$R$};
\draw (6.2,12) node[anchor=west] {$S$};
\draw (0.8,8) node[anchor=east] {$T$};
\draw (6.2,8) node[anchor=west] {$U$};
\end{scriptsize}
\end{tikzpicture}
\end{minipage}%
\begin{minipage}{0.06\textwidth}
\centering
\scalebox{2.5}{$\rightarrow$}
\end{minipage}%
\begin{minipage}{0.47\textwidth}
\centering
\begin{tikzpicture}[line cap=round,line join=round,>=triangle 45,x=0.5cm,y=0.5cm]
\clip(-1,-1) rectangle (13.5,13.5);
\draw [line width=2.pt] (0.,13.)-- (13.,13.);
\draw [line width=2.pt] (13.,0.)-- (13.,13.);
\draw [line width=2.pt] (0.,13.)-- (0.,0.);
\draw [line width=2.pt] (0.,0.)-- (13.,0.);
\begin{scriptsize}
\foreach \i in {0,...,13}
{
\draw (\i,-0.5) node[anchor=center] {$\i$};
\draw (-0.5,\i) node[anchor=center] {$\i$};
}
\foreach \i in {1,...,3}
{
\draw [fill=turquoise] (\i,1.) circle (3.45pt);
%\draw [fill=orange] (\i,2.) circle (3.45pt);
\draw [fill=turquoise] (\i,3.) circle (3.45pt);
%\draw [fill=orange] (\i,4.) circle (3.45pt);
\draw [fill=turquoise] (\i,5.) circle (3.45pt);
%\draw [fill=orange] (\i,6.) circle (3.45pt);
\draw [fill=turquoise] (\i,7.) circle (3.45pt);
%\draw [fill=orange] (\i,8.) circle (3.45pt);
\draw [fill=turquoise] (\i,9.) circle (3.45pt);
%\draw [fill=orange] (\i,10.) circle (3.45pt);
\draw [fill=turquoise] (\i,11.) circle (3.45pt);
%\draw [fill=orange] (\i,12.) circle (3.45pt);
}
\foreach \i in {4,...,6}
{
\draw [fill=ffqqff] (\i,1.) circle (3.45pt);
%\draw [fill=lightgggrey] (\i,2.) circle (3.45pt);
\draw [fill=ffqqff] (\i,3.) circle (3.45pt);
%\draw [fill=lightgggrey] (\i,4.) circle (3.45pt);
\draw [fill=ffqqff] (\i,5.) circle (3.45pt);
%\draw [fill=lightgggrey] (\i,6.) circle (3.45pt);
\draw [fill=ffqqff] (\i,7.) circle (3.45pt);
%\draw [fill=lightgggrey] (\i,8.) circle (3.45pt);
\draw [fill=ffqqff] (\i,9.) circle (3.45pt);
%\draw [fill=lightgggrey] (\i,10.) circle (3.45pt);
\draw [fill=ffqqff] (\i,11.) circle (3.45pt);
%\draw [fill=lightgggrey] (\i,12.) circle (3.45pt);
}
\foreach \i in {7,...,9}
{
\draw [fill=ffqqqq] (\i,1.) circle (3.45pt);
%\draw [fill=yellow] (\i,2.) circle (3.45pt);
\draw [fill=ffqqqq] (\i,3.) circle (3.45pt);
%\draw [fill=yellow] (\i,4.) circle (3.45pt);
\draw [fill=ffqqqq] (\i,5.) circle (3.45pt);
%\draw [fill=yellow] (\i,6.) circle (3.45pt);
\draw [fill=ffqqqq] (\i,7.) circle (3.45pt);
%\draw [fill=yellow] (\i,8.) circle (3.45pt);
\draw [fill=ffqqqq] (\i,9.) circle (3.45pt);
%\draw [fill=yellow] (\i,10.) circle (3.45pt);
\draw [fill=ffqqqq] (\i,11.) circle (3.45pt);
%\draw [fill=yellow] (\i,12.) circle (3.45pt);
}
\foreach \i in {10,...,12}
{
\draw [fill=qqffqq] (\i,1.) circle (3.45pt);
%\draw [fill=violet] (\i,2.) circle (3.45pt);
\draw [fill=qqffqq] (\i,3.) circle (3.45pt);
%\draw [fill=violet] (\i,4.) circle (3.45pt);
\draw [fill=qqffqq] (\i,5.) circle (3.45pt);
%\draw [fill=violet] (\i,6.) circle (3.45pt);
\draw [fill=qqffqq] (\i,7.) circle (3.45pt);
%\draw [fill=violet] (\i,8.) circle (3.45pt);
\draw [fill=qqffqq] (\i,9.) circle (3.45pt);
%\draw [fill=violet] (\i,10.) circle (3.45pt);
\draw [fill=qqffqq] (\i,11.) circle (3.45pt);
%\draw [fill=violet] (\i,12.) circle (3.45pt);
}
\draw (7,11.2) node[anchor=south] {$R$};
\draw (7,0.8) node[anchor=north] {$S$};
\draw (9,11.2) node[anchor=south] {$T$};
\draw (9,0.8) node[anchor=north] {$U$};
\end{scriptsize}
\end{tikzpicture}
\end{minipage}%
	
	\caption{Illustration of the congruence class pairing map for slope $+2$ lines for $p=13$, with the images of the four coloured subsets of $\F_p^2$ on the left marked with the same colour on the right. The change in orientation illustrated for the red set by the images of the points marked $R$, $S$, $T$, $U$ is valid for each coloured set.}
	\label{fig:class_pairing_slope2}
\end{figure}

Let $n=3p$ for an odd prime $p$, and take the $3p\times 3p$ grid $\cG^{+}=[0, 3p-1]\times \left[-\frac{p-1}{2}, \frac{5p-1}{2}\right]$. In order to present our no-$4$-in-line construction in $\cG^+$, we introduce the following notion.

\begin{definition}[$m$-equivalent points]
Let $m\in \rr\cup \{\infty\}$. We say that two points $P, Q$ of $\cG^{+}$ are \textit{$m$-equivalent}, if there is a line $\ell$ of slope $m$ that contains both points and there exists an element $c\in \F_p^*$ such that $\{P, Q\}\subseteq H(c,p)$.
\end{definition}

For each direction $m$, this defines an equivalence relation on the normal points of $\cG^+$.

Our aim is the following. We first produce a subset $T$ of $\cG^+$ that contains at most $3$ points in every $m$-equivalence class for every direction $m$, and then our no-$4$-in-line construction will be $T':=T\cap H(c,p)$ for one fixed value $c\in \F_p^*$.

In $\cG^{+}$, every normal congruence class is represented by 9 points arranged in a $3\times 3$ grid-like manner, and by \autoref{obs:atmosttwoclasses} and \autoref{obs:meet_classes}(a), every line can meet $H(c,p)$ in at most two congruence classes, or in at most one if the line is horizontal or vertical. So on horizontal and vertical lines, $T'$ can have at most $3$ points. There are no two congruent points on a line of modulus $\ge 3$, therefore such lines will meet $T'$ in at most $2$ points. So we only need to consider lines of modulus $1$ or $2$.

The class-pairing properties of $H(c,p)$ described in \autoref{obs:meet_classes} and \autoref{obs:meet_classes_2} will enable us to partition $\cG^{+}$ into regions for every direction $m\in \left\{\pm 1, \pm 2, \pm \frac12\right\}$, such that each region only contains pairwise $m$-inequivalent points, and any two regions are either pointwise $m$-equivalent copies of each other or contain no pair of $m$-equivalent points at all.

To obtain our set $T$, we keep at most $3$ regions in every such pointwise $m$-equivalent family of regions, and completely erase the other regions. To maximize the number of remaining points, our intention is to choose the erased regions for the different directions so that they overlap each other as much as possible.

\autoref{fig:finalset} depicts the set $T$, from which we retain each point belonging to the $H(c,p)$ construction for one fixed $c\in \mathbb{F}_p^*$. According to the colours of the regions and the legend, in some of the regions we only put those points into $T$ whose $x$-coordinate, $y$-coordinate, or both, have a certain mod $p$ parity. The full procedure of obtaining $T$ is presented in \autoref{sec:full_k=3}.

\begin{figure}[h!!]
		\centering
		\input{figures/finalset}
        \captionsetup{skip=3pt}
		\captionof{figure}{Our final set $T$, from which the no-$4$-in-line construction $T'$ is obtained via $T':=T\cap H(c,p)$. According to the legend, the colour of each area indicates restrictions on the mod $p$ parity of the coordinates for a point in that area to be included in $T$.}
		\label{fig:finalset}
\end{figure}
\section{Concluding remarks}\label{sec:conclusion}

The main results of our paper establish a lower bound on $f_k(n)$ by $|f_k(n)-kn|=O(n)$
for any  value of $k$. Yet, we believe that this holds in a stronger form, namely $|f_k(n)-kn|=o(n)$ also holds for  $k:=k(n)\gg 1$. 
Notice however that \autoref{k=3} gives evidence that we cannot expect probabilistic bounds using multiple Hall--Jackson--Sudbery--Wild type $2p\times 2p$ constructions to be near-optimal.
As a matter of fact, we conjecture that even the   \autoref{hallcon} of Hall et al. might not be asymptotically tight. On the other hand, as we mentioned at the end of the introduction, Grebennikov and Kwan proved that $|f_k(n)-kn|=o(n)$  holds for  $k:=k(n)\gg 1$ in a strong form: $f_k(n)-kn=0$ for  $k:=k(n)\ge  10^{37}$ provided that $n\ge k$.

Similarly to the case when $k$ was not small, discussed in \cite{kovacs2025settling}, the no-$(k+1)$-in-line property of the presented constructions essentially relied on the fact that they \textit{intersect the lines with small modulus}, i.e., lines of slope $\pm 1$ beside the horizontal and vertical lines, in at most $k$ points.
Several  well-known problems have similar requirements, see e.g. \cite{  klavvzar2025general, cooper2014martin}. The most prominent example might be the so-called $n$-queens problem. To solve it, one aims  to place $n$ chess queens on an $n \times n$ chessboard so that no two queens share the same row, column, or diagonal. 
The number of such placements was determined asymptotically just recently by Simkin \cite{simkin2023number}, see also the preceding work of Bowtell, Keevash \cite{bowtell2021n} and Luria, Simkin \cite{luria2021lower}.

%%% AUTHOR: optional appendix here
\appendix %% you may comment this out if no Appendix
\section{Improvement for certain small values of \texorpdfstring{$k$}{k}}\label{sec:smallK}

\subsection{The main idea}

Let us recall \autoref{con:randomConstruction}:  
we started from $\cS(W)\subseteq \cG$ and removed a  point set $\cR$ from it to obtain a no-$(k+1)$-in-line construction $\cP(W)$. 
In this section, along the idea of \autoref{improve}, we alter the process of finding $\cR$ from \autoref{sec:constructionGeneral} 
to obtain  stronger lower bounds for $f_k(n)$ when $k$ is small, at least for certain values of $k$. 

For example, in the case $W=\{c\}$ and $k=2$, 
\autoref{sec:HJSW-2} essentially details an optimal choice for the points $\cR$ to be removed to gain the largest no-$3$-in-line set $S_2(c,p)$ inside $\cS(W)$. 
This value of $\Cardinality{\cP_k(W)}$ is a sharpening of the estimate of \eqref{eq:constructionSize} that we used in \autoref{sec:constructionGeneral}. 
Similarly, for $W=\{c\}$ and $k=3$, removing the points described in \autoref{sec:HJSW-34} gives the largest no-$4$-in-line set inside $\cS(W)$, namely $S_3(c,p)$.

We mix these optimal removals with the probabilistic process of \autoref{sec:constructionGeneral} to obtain the following process.
For arbitrary $W\in\Omega_p$ and $k\in \N$, we remove points from the disjoint union $\cS_0\leteq \cS(W)=\bigcup_{c\in W}S_4(c,p)\subset \cG$ in two 
steps to obtain a no-$(k+1)$-in-line set.
\begin{enumerate}
    \item \textbf{Manual removal}. 
    For an arbitrary function $\kappa\from W\to \{2,3,4\}$,  
    we manually remove $S_4(c,p)\setminus S_{\kappa(c)}(c,p)$ from $\cS_0$ for every $c\in W$. 
    In this way, we obtain an intermediate set
    $\cS_1\leteq \bigcup_{c\in W} S_{\kappa(c)}(c,p)$. 
    \item \textbf{Probabilistic removal}. Finish by applying the (suboptimal) averaging argument as in \autoref{sec:constructionGeneral} to $\cS_1$ to remove the remaining excess points to obtain a no-$(k+1)$-in-line set $\cS_2$.
\end{enumerate}
Note that when $\kappa(c)=4$ for every $c$, this simplifies to the method described in \autoref{sec:constructionGeneral}. 
If $W=\{c\}$ and $\kappa(c)=k\in\{2,3\}$, then we obtain the best no-$(k+1)$-in-line set already after the manual step, i.e. the probabilistic one has no effect.

In general, instead of removing the excess points from $\cS_0$ in a single (and global) step, 
the process above artificially subdivides this problem into many (local) subproblems of removing some of the excess points from $S_4(c,p)$ each $c\in W$ (before applying a global removal step).
The (manual) solution above to these local problems is optimal but fails to consider the global problem in its entirety, whereas the (probabilistic) solution above to the global problem is suboptimal. 
We expect the effect of the first (manual) step to be relevant for small $k$, and 
suspect that the second (probabilistic) one will dominate for large $k$.

\subsection{Improved construction}

We now expand the ideas above formally. 
The development and structure are very similar to \autoref{sec:constructionGeneral}, 
so here we are only highlighting the key steps.

As in \autoref{def:G-Omega-L}, 
define $\bar \Omega_p$ to be the set consisting of triplets $\bar W=(W_2,W_3,W_4)$ of pairwise disjoint subsets of $\F_p^*=\{1,\dots,p-1\}$. 
Recall the hyperbola $S_i(c,p)\subset \cG$ from \autoref{sec:prelim} for $i\in\{2,3,4\}$ and \autoref{def:G-Omega-L}.
\begin{construction}[Extended randomised construction, cf. \autoref{con:randomConstruction}]
\label{con:randomConstructionModified}
    Let $\bar W\in \bar \Omega_p$ and $k\in\N$ with $k\geq 2( \Cardinality{W_2}+\Cardinality{W_3}+\Cardinality{W_4})$. 
    Let $\cS(\bar W)\leteq \bigcup_{i=2}^4 \bigcup_{c\in W_i} S_i(c,p) \subset \cG'$, 
    and pick a subset $\bar \cR\subseteq \cS(\bar W)$ of minimal size such that  
    $|\ell\cap \bar \cR|\geq |\ell\cap \cS(\bar W)|-k$ 
    for every line $\ell\in\cL'$. 
    Our construction is  $\cP_k(\bar W)\leteq \cS(\bar W)\setminus\bar{\cR}$.
\end{construction}

\begin{lemma}[Construction size]
\label{lem:constructionModified}
    $\cP_k(\bar W)\subset \cG$ from \autoref{con:randomConstructionModified} is a no-$(k+1)$-in-line set of size 
    \begin{equation}
        \Cardinality{\cP_k(\bar W)}
        \geq 
        (p-1)\left(3\Cardinality{W_2}+\tfrac{7}{2}\Cardinality{W_3} + 4\Cardinality{W_4}\right) -\bar X_{k,p}(\bar W)
        \label{eq:constructionSizeModified}
    \end{equation}
    where 
    $\bar X_{k,p}(\bar W)\leteq \sum_{\ell\in\cL'} \max\{0,\Cardinality{\ell\cap \cS(\bar W)}-k\}$.
\end{lemma}
\begin{proof}
    The proof is analogous to that of \autoref{lem:construction}, 
    but now we need to use 
    $\Cardinality{S_2}=3(p-1)$ from \autoref{hjsw_th}
    and 
    $\Cardinality{S_3}\geq \frac{7}{2}(p-1)$, 
    $\Cardinality{S_4} = 4(p-1)$
    from \autoref{thm:pversion_34}.
\end{proof}

As in \autoref{def:ProbabilitySpace}, consider the uniform random discrete probability space on $\bar \Omega_p$ on which $\bar X_{k,p}$ is a random variable.
For $\bar t=(t_2,t_3, t_4)\in\N^3$, define the event $A_{\bar t}\leteq \{\bar W\in \bar \Omega_p: \Cardinality{W_i}=t_i\text{ for } i\in\{2,3,4\}\}$.

In the following statement, we find the conditional expected value $\EE(\bar X_{k,p}\mid A_{\bar t})$ that expresses the number of deleted points from \autoref{con:randomConstructionModified}.
\begin{lemma}[Cf. \autoref{lem:EwithM4j}]\label{lem:EwithMij}
    For every $k\in \N$, $\bar t=(t_2,t_3,t_4)\in\N^3$ and $s\in\N$ with $k=2t_2+2t_3+2t_4+s$,  
    we have  
    \begin{equation}
    \label{eq:Emodified}
\EE(\bar X_{k,p}\mid A_{\bar t})=
    \frac{4}{\binom{p-1}{t_2+t_3+t_4}\cdot \binom{t_2+t_3+t_4}{t_2,t_3,t_4}}  
    \sum_{\bar \tau\in T(\bar t,s)}
    \Delta_s(\bar \tau) 
    \sum_{m=2}^6
    \sum_{d\in D_m}
    \prod_{i=2}^4\prod_{j=0}^4\binom{\Cardinality{M_{i,j}(\ell_d^+)}}{\tau_{i,j}}
\end{equation}
where 
\begin{itemize}
    \item $T(\bar t,s)$ is the index set consisting of those $\bar\tau\leteq (\tau_{i,j})\in\N^{3\times 5}$ 
    ($i\in\{2,3,4\}$ and $j\in\{0,\dots,4\}$) that satisfy 
    $\sum_{j=0}^4 \tau_{i,j}=t_i$ for all $i\in\{2,3,4\}$, 
    and $0<\Delta_s(\bar\tau)\leteq -s+ \sum_{i=2}^4 \sum_{j=0}^4 (j-2)\tau_{i,j}$, and

    \item the value of $M_{i,j}(\ell)\leteq \{c\in\F_p^* : \Cardinality{\ell \cap S_i(c,p)}=j\}$ (for any line $\ell\in\cL'$) is given in \autoref{tab:M4j} (where $M_j(\ell)=M_{4,j}(\ell)$) and \autoref{tab:Mij}.

   \begin{table}[!htb]
    \centering
\begin{tabular}{C||CC|CC|CC|C|C}
\multicolumn{1}{c||}{\multirow{2}{*}{$\Cardinality{M_{i,j}(\ell_d^+)}$}} & 
\multicolumn{2}{C|}{d\in D_2}  & 
\multicolumn{2}{C|}{d\in D_3} & 
\multicolumn{2}{C|}{d\in D_4} & 
d\in D_5 & 
d\in D_6
\\ %\hline
  & i=2 & i=3  & 
i=2 & i=3 & 
i=2 & i=3 & 
i\in \{2,3\} & 
i\in \{2,3\}
 \\ \hline\hline
j=0 & \frac{p-1}{2} & \frac{p-1}{2}  & 
\frac{p-1}{2} & \frac{p-1}{2} & 
\frac{p-1}{2} & \frac{p-1}{2} & 
\frac{p-1}{2} & d-\frac{3p+1}{2}\\
j=1 & 1 & 0 & 
0 & 0 &  
1 & 1 & 
d-\frac{3p-1}{2} & \frac{5p-1}{2}-d
\\
j=2 & \frac{p-3}{2} & d-\frac{p-1}{2} & 
\frac{p-1}{2} & 0 &  
\frac{p-3}{2} & d-p-1 & 
2p-1-d  & 0
\\
j=3 & 0 & p-1-d & 
0 & \frac{p-1}{2} & 
0 & \frac{3p-1}{2}-d & 
0 & 0
\\
j=4 & 0 & 0 & 
0 & 0 & 
0 & 0 & 
0 & 0
\end{tabular}
\caption{The value of $|M_{i,j}(\ell_d^+)|$ for $d\in D^\top$, $i\in\{2,3\}$ and $j\in\{0,\dots,4\}$ which can be obtained as in the proof of \autoref{lem:M4j}. The intervals $D_2,\dots,D_6$ are defined in \autoref{def:D}.}
\label{tab:Mij}
\end{table}

\end{itemize}
\end{lemma}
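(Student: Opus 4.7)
The plan is to mimic the proof of \autoref{lem:EwithM4j} by the same double-counting strategy, taking care to track the multi-index $(i,j)\in\{2,3,4\}\times\{0,\ldots,4\}$ instead of a single index $j$.

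I start by unpacking the conditional expectation and swapping the order of summation:
\[
\EE(X_{k,p}\mid A_{\bar t})=\frac{1}{|A_{\bar t}|}\sum_{\ell\in\cL'}\sum_{\bar W\in A_{\bar t}}\max\{0,|\ell\cap\cS(\bar W)|-k\}.
\]
A standard count gives $|A_{\bar t}|=\binom{p-1}{t_2+t_3+t_4}\binom{t_2+t_3+t_4}{t_2,t_3,t_4}$ (first pick the ground set $W_2\cup W_3\cup W_4$ of size $t_2+t_3+t_4$, then distribute elements into three ordered groups), which explains the prefactor of the target formula modulo the overall $4$. For each line $\ell\in\cL'$, I define the \emph{signature map}
\[
g_\ell\from\{\bar W\in A_{\bar t}:|\ell\cap\cS(\bar W)|>k\}\to T(\bar t,s),\quad g_\ell(\bar W)_{i,j}\leteq|W_i\cap M_{i,j}(\ell)|.
\]
The image lands in $T(\bar t,s)$ because (a) $\sum_j g_\ell(\bar W)_{i,j}=|W_i|=t_i$ since $\{M_{i,j}(\ell)\}_j$ partitions $\{1,\ldots,p-1\}$ for each fixed $i$; and (b) the decomposition $|\ell\cap\cS(\bar W)|=\sum_i\sum_{c\in W_i}|\ell\cap S_i(c,p)|=\sum_{i,j}j\cdot g_\ell(\bar W)_{i,j}$, combined with $k=s+2(t_2+t_3+t_4)$, yields $|\ell\cap\cS(\bar W)|-k=\Delta_s(g_\ell(\bar W))>0$.

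Next, I claim the preimage cardinality for $\bar\tau\in T(\bar t,s)$ is
\[
|g_\ell^{-1}(\bar\tau)|=\prod_{i=2}^4\prod_{j=0}^4\binom{|M_{i,j}(\ell)|}{\tau_{i,j}},
\]
obtained by independently choosing, for each $(i,j)$, a size-$\tau_{i,j}$ subset of $M_{i,j}(\ell)$ that will serve as $W_i\cap M_{i,j}(\ell)$, and reconstructing $W_i=\bigsqcup_j(W_i\cap M_{i,j}(\ell))$. Substituting into the previous display gives
\[
\EE(X_{k,p}\mid A_{\bar t})=\frac{1}{|A_{\bar t}|}\sum_{\ell\in\cL'}\sum_{\bar\tau\in T(\bar t,s)}\Delta_s(\bar\tau)\prod_{i=2}^4\prod_{j=0}^4\binom{|M_{i,j}(\ell)|}{\tau_{i,j}}.
\]

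Finally, the symmetries of \autoref{lem:M4j}(1)--(2) hold verbatim for $M_{i,j}$ with $i\in\{2,3\}$ (they are consequences of the reflection properties in \autoref{obs:meet_classes}, which apply to every modular hyperbola $H(c,p)$, not just to $S_4$), so summing over $\cL'$ is the same as summing $4$ times over $\{\ell_d^+:d\in D^\top\}$ with $D^\top=\bigcup_{m=2}^6 D_m$. Swapping the $\bar\tau$ and $\ell$ sums then produces the stated identity. The main obstacle is the preimage count: one must confirm that the product of binomials genuinely captures the number of \emph{disjoint} triples $(W_2,W_3,W_4)$ lying in $A_{\bar t}\subseteq \bar\Omega_p$, which requires a careful combinatorial argument reconciling the independent choices $W_i\cap M_{i,j}(\ell)$ across $(i,j)$ with the pairwise disjointness constraint defining $\bar\Omega_p$; once this is in place, the rest of the derivation is entirely parallel to the single-hyperbola case.
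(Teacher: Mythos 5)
Your proposal follows the paper's proof of this lemma essentially verbatim: the same fibre map $g_\ell(\bar W)_{i,j}=\Cardinality{W_i\cap M_{i,j}(\ell)}$, the same count $\Cardinality{A_{\bar t}}=\binom{p-1}{t_2+t_3+t_4}\binom{t_2+t_3+t_4}{t_2,t_3,t_4}$, the same product formula for the fibres, and the same use of the reflection symmetries to convert $\sum_{\ell\in\cL'}$ into $4\sum_{m=2}^{6}\sum_{d\in D_m}$.

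The obstacle you flag at the end is, however, not something a ``careful combinatorial argument'' will remove, and you should not leave it as a promissory note. For distinct $i$ the partitions $\{M_{i,j}(\ell)\}_{j}$ of $\{1,\dots,p-1\}$ are genuinely different (for instance $S_2(c,p)\subseteq S_3(c,p)$ forces $M_{3,0}(\ell)\subseteq M_{2,0}(\ell)$, and \autoref{tab:Mij} shows these two sets coincide for $d\in D_2$), so choosing the pieces $W_i\cap M_{i,j}(\ell)$ independently over $i$ produces triples that need not be pairwise disjoint. Consequently $\prod_{i=2}^{4}\prod_{j=0}^{4}\binom{\Cardinality{M_{i,j}(\ell)}}{\tau_{i,j}}$ strictly overcounts $\Cardinality{g_\ell^{-1}(\bar\tau)}$ as soon as two of the $t_i$ are positive; globally this is visible from $\sum_{\bar\tau}\prod_{i,j}\binom{\Cardinality{M_{i,j}(\ell)}}{\tau_{i,j}}=\prod_{i}\binom{p-1}{t_i}>\Cardinality{A_{\bar t}}$ (Vandermonde in each $i$ separately). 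So the displayed identity is not exact; the paper's own proof asserts the same fibre count without comment and carries the same defect. The correct repair is quantitative rather than combinatorial: for fixed $\ell$ and $\bar\tau$ the overcount is $\bigO(p^{t_2+t_3+t_4-1})$ (fix an element of some $W_i\cap W_{i'}$ and count the remaining choices), while $\Cardinality{A_{\bar t}}=\Theta(p^{t_2+t_3+t_4})$ and $\Cardinality{\cL'}=\bigO(p)$, so the total error contributed to $\EE(X_{k,p}\mid A_{\bar t})$ is $\bigO(1)$. Since the lemma is only ever used through \autoref{prop:keyModified}, which already tolerates an additive $\bigO(1)$, the right fix for both your write-up and the paper's is to weaken \eqref{eq:Emodified} to an equality up to $\bigO(1)$ (or to restrict to the case where at most one $t_i$ is nonzero, where your product count is exact); as stated, the identity fails for general $\bar t$.
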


\begin{proof}
    The proof is analogous to that of \autoref{lem:EwithM4j} with the following minor modifications.

    Note that similarly to \autoref{lem:M4j}, the symmetries 
    $|{M_{i,j}(\ell_d^-)} = \cardinality{M_{i,j}(\ell_{d-p}^+)}$ and $\cardinality{M_{i,j}(\ell_{p-d}^+)} = \cardinality{M_{i,j}(\ell_d^+)}$ hold for every $d\in \Z$, $i\in\{2,3,4\}$ and $j\in\{0,\dots,4\}$, 
    hence we may replace the summation on the lines $\sum_{\ell\in \cL'}$ by the summation  $4\sum_{m=2}^6
    \sum_{d\in D_m}$ as in the proof of \autoref{lem:T0}.

    The map is given by $ g_\ell(\bar W)_{i,j} \leteq \Cardinality{W_i\cap M_{i,j}(\ell)}$. 
    With this setup, we have $\Cardinality{\ell\cap \cS(\bar W)}-k = \Delta_s( g_\ell(\bar W))$, and 
    $\Cardinality{ g_\ell^{-1}(\bar \tau)} = \prod_{i=2}^4 \prod_{j=0}^4 \binom{\Cardinality{M_{i,j}(\ell)}}{\tau_{i,j}}$. 
    Finally $\Cardinality{A_{\bar t}} = \binom{p-1}{t_2+t_3+t_4}\cdot \binom{t_2+t_3+t_4}{t_2,t_3,t_4} = 
    \frac{(p-1)!}{(p-1-t_2-t_3-t_4)! t_2! t_3! t_4!}$. 
\end{proof}

We can determine the asymptotic behaviour of $\EE(\bar X_{k,p}\mid A_{\bar t})$.
\begin{proposition}[Cf. \autoref{prop:key}]
\label{prop:keyModified}
    For every $k\in \N$, $\bar t=(t_2,t_3,t_4)\in \N^3$ and $s\in \N$ satisfying $k=2t_2+2t_3+2t_4+s$, 
    there exists $F_{\bar t, s} \in \R$ independent of $p$ such that 
    as the odd prime $p\to\infty$, we have 
    \begin{equation}
        \EE(\bar X_{k,p}\mid A_{\bar t}) = F_{\bar t, s}\cdot p + \bigO(1).
        \label{eq:EAsymptoticsModified}
    \end{equation}
\end{proposition}

\begin{proof}[Proof of \autoref{prop:keyModified}]
	Define 
	\[P\leteq 4\sum_{\bar \tau\in T(\bar t,s)}
	\Delta_s(\bar \tau) 
	\sum_{m=2}^6
	\sum_{d\in D_m}
	\prod_{i=2}^4\prod_{j=0}^4\binom{\Cardinality{M_{i,j}(\ell_d^+)}}{\tau_{i,j}},
    \qquad 
	Q\leteq \binom{p-1}{t_2+t_3+t_4}\cdot \binom{t_2+t_3+t_4}{t_2,t_3,t_4},\]
	so that the expected value from the statement is $\EE(\bar X_{k,p}\mid A_{\bar t})=\frac{P}{Q}$ using \eqref{eq:Emodified}.  
	
	We claim that $P$ is a polynomial in $p$ of degree $\deg P\leq t+1$ where $t \leteq t_2+t_3+t_4$. 
	Indeed,  
	pick $\bar \tau\in T(\bar t, s)$,  $m\in\{2,\dots,6\}$ and $d\in D_m$. 
	Now \autoref{tab:Mij} and \autoref{tab:M4j} shows that $\Cardinality{M_{i,j}(\ell_d^+)}$ is a polynomial in $p,d$ of total degree at most $1$. 
	Thus $\binom{\Cardinality{M_{i,j}(\ell_d^+)}}{\tau_{i,j}}$ is a polynomial in $p,d$ of total degree at most $\tau_{i,j}$, as $\binom{x}{\delta}=\frac{1}{\delta!}\prod_{\iota=0}^{\delta-1}(x-\iota)$ for any $x\in\R$ and $\delta\in\N$. 
	Hence $P_{\bar \tau,m}(p,d) \leteq \prod_{i=2}^4\prod_{j=0}^4\binom{\Cardinality{M_{i,j}(\ell_d^+)}}{\tau_{i,j}}$ is a polynomial in $p,d$ of total degree at most $\sum_{i=2}^4\sum_{j=0}^4 \tau_{i,j} = \sum_{i=2}^4 t_i = t$ by the definition of $\bar\tau\in  T(\bar t, s)$ from \autoref{lem:EwithMij}. 
	Recall for $x,\delta\in \N$, the sum $\sum_{\iota=1}^x \iota^\delta$ is a polynomial in $x$ of degree $\delta+1$. 
	This implies that $\sum_{d\in D_m} P_{\bar \tau,m}(p,d)$ is a polynomial in $p$ of degree at most $t+1$ because the integer interval $D_m$ has endpoints given by linear polynomial expressions in $p$, see \autoref{def:D}. 
	This shows that $P$ is indeed as claimed.
	
	Next, note that $Q$ is a polynomial of degree $t_2+t_3+t_4=t$ in $p$. 
	This means that  $\EE(\bar X_{k,p}\mid A_{\bar t})=\frac{P}{Q}$ is a rational function in $p$ with $\deg(P)\leq t+1=\deg(Q)+1$, hence the statement follows.
\end{proof}

We obtain the following general lower bound for $f_k(n)$.
\begin{theorem}[Cf. \autoref{thm:asymptotics}]
\label{thm:asymptoticsModified}
    Let $k\in \N$. 
    Pick $\bar t=(t_2,t_3,t_4)\in \N^3$ and $s\in \N$ with $k=2t_2+2t_3+2t_4+s$. 
    Then as $n\to \infty$, we have  
    \begin{equation}
        kn - \big(\tfrac{1}{2}t_2+\tfrac{1}{4}t_3+s+\tfrac{1}{2} F_{\bar t,s}+\littleO(1)\big)n \leq f_k(n).
        \label{eq:bestBoundModified}
    \end{equation} 
\end{theorem}
\begin{proof}
    The proof is completely analogous to that of \autoref{thm:asymptotics}, but now we use 
    \eqref{eq:constructionSizeModified} instead of \eqref{eq:constructionSize}, 
    and \eqref{eq:EAsymptoticsModified} instead of \eqref{eq:EAsymptotics}. 
\end{proof}

\begin{remark}
	Note that \autoref{thm:asymptoticsModified} gives back \autoref{thm:asymptotics} in the case $\bar t = (0,0,t)$.
\end{remark}

While it is possible to compute $\EE(\bar X_{k,p}\mid A_{\bar t})$ using \eqref{eq:Emodified} for concrete values of $\bar t$ and $s$, it would be very tedious to develop a concrete general formula similar to \eqref{eq:Fdef} or to \autoref{moreex}. 
So instead, we use a computer to check every possible setup in \autoref{thm:asymptoticsModified} for $2\leq k\leq 22$. 
As expected, we find some improvements to \autoref{tab:lowerBoundsGeneral}, but these are limited to small $k$ as summarised in \autoref{tab:improvedLowerBounds}. 

\begin{table}[htb!]
    \centering
    \begin{tabular}{C||CC|R@{${}\approx{}$}LCl}
        k & \bar t & s & \multicolumn{2}{C}{\bar C_k} & \bar C_k - C_k & Comment about the type of $\bar t$ 
        \\ \hline\hline
        2 & (1, 0, 0) & 0 & \frac{3}{4} & 0.75000 & \frac{1}{4} 
        	& Using $S_2$ is crucial only here. \\ \hline
        3 & \makecell{(0, 1, 0)\\(0, 0, 1)} & 1 & \frac{7}{12} & 0.58333 & 0 & 
        	Types: $(0,1,t-1)$, $(0,0,t)$. \\ \hline
        4 & \makecell{(2,0,0)\\(1,1,0)\\(1,0,1)\\(0,2,0)\\(0,1,1)\\(0,0,2)} & 0 & \frac{3}{4} & 0.75000 & 0
        	& All possible setups are equally good. \\ \hline
        5 & \makecell{(0,1,1)\\(0,0,2)} & 1 & \frac{41}{60} & 0.68333 & 0 & 
        	Types: $(0,1,t-1)$, $(0,0,t)$. \\ \hline
        6 & \makecell{(0,2,1)\\(0,1,2)} & 0 & \frac{59}{72} & 0.81944 & \frac{1}{144} & 
        	Using $S_3$ is crucial only here. \\ \hline
        8 & \makecell{(0,1,3)\\(0,0,4)} & 0 & \frac{109}{128} & 0.85156 & 0 & 
        	Types: $(0,1,t-1)$, $(0,0,t)$. \\ \hline
        10 & \makecell{(0,1,4)\\(0,0,5)} & 0 & \frac{559}{640} & 0.87344 & 0 &
        	Types: $(0,1,t-1)$, $(0,0,t)$.  
    \end{tabular}
    \caption{For each $k\leq 22$, all optimal setup(s) of \autoref{thm:asymptoticsModified} w.r.t. $\bar t$ are displayed giving the  best bound $(\bar C_k-\littleO(1))\cdot kn\leq f_k(n)$. 
   	We do not display those values of  $k$  where this bound $\bar C_k$ is the same as $C_k$ from \autoref{tab:lowerBoundsGeneral} and no other setup achieves $\bar C_k$. 
   	Actual improvement ($\bar C_k-C_k$) happened only for $k\in\{2,6\}$.
}
    \label{tab:improvedLowerBounds}
\end{table}

The following lower bounds are actual improvements to \autoref{thm:asymptotics}.
\begin{theorem}
	As $n\to \infty$, we have 
    \begin{align*}
        \left(\tfrac{3}{4}-\littleO(1)\right)\cdot 2n& \leq f_2(n)\quad\text{(as done also by \cite{hall1975some}), and }\\
        \left(\tfrac{59}{72}-\littleO(1)\right)\cdot 6n &\leq f_6(n).
    \end{align*}
\end{theorem}

\section{Full proof of \autoref{k=3} -- the detailed case of \texorpdfstring{$k=3$}{k=3}}\label{sec:full_k=3}

In this section, we prove  \autoref{k=3}. The idea is similar to the HJSW construction, but this time we use a grid of size $3p\times 3p$ (where $p\ge 3$ is a prime), hence consisting of $6\times 6$ blocks of normal points. Again we will take the points of the modular hyperbola $H(c,p)$ that lie in certain areas of the grid.

Recall that a point $(x,y)\in \zz\times \zz$ is \textit{normal} if $p\nmid x$ and $p\nmid y$.

\subsection{Congruence class pairing for modulus \texorpdfstring{$2$}{2} lines}

Recall from \autoref{obs:meet_classes} that if a line of slope $+1$ met $H(c,p)$ in two congruence classes, then these classes were $(x,y)$ and $(-y,-x)$ for some $x,y\in \F_p^{*}$, and for lines of slope $-1$ we had $(x,y)$ and $(y,x)$. Now we reiterate some observations for modulus $2$ lines, that is, lines of slope $\pm 2$ or $\pm \frac12$ from \autoref{sec:k=3}.

\begin{observation}
	Let $\ell$ be a line of modulus $2$. If $\ell$ meets $H(c,p)$ in two congruence classes, with one of them being $(x,y)$ for some $x,y\in \F_p^{*}$, then depending on whether $\ell$ has slope $+2$, $-2$, $+\frac12$, or $-\frac12$, the other congruence class must be $\left(-\frac{y}{2}, -2x\right)$, $\left(\frac{y}{2}, 2x\right)$, $\left(-2y, -\frac{x}{2}\right)$, or $\left(2y, \frac{x}{2}\right)$ respectively.
\end{observation}

When $\ell$ meets $H(c,p)$ in just one congruence class, the two mentioned congruence classes must coincide (e.g., we have $y=-2x$ in the slope $+2$ case).

For the slope $+2$ case, \autoref{fig:class_pairing_slope2} gave us a picture of the transformation $(x,y)\mapsto \left(-\frac{y}{2}, -2x\right)$ for $p=13$. In this illustration, we depicted the images of four out of the eight subsets obtained by partitioning the normal points of $\F_p^2$ based on their congruence class location ($A$, $B$, $C$, $D$) and their $y$-coordinate's mod $p$ parity. Mappings for the other slopes are similar in nature, with full columns mapped to full rows. However, for slopes $\pm \frac12$, the structure is based on the mod $p$ parity of the $x$-coordinate instead of that of $y$.

\subsection{Proof of \autoref{k=3}: the \texorpdfstring{$6\times 6$}{6x6} block construction}

Let $n=3p$ for an odd prime $p$, and take the $3p\times 3p$ grid $\cG^{+}=[0, 3p-1]\times \left[-\frac{p-1}{2}, \frac{5p-1}{2}\right]$. In order to present our no-$4$-in-line construction in $\cG^+$, we introduce the following notion.

\begin{definition}[$m$-equivalent points]
Let $m\in \rr\cup \{\infty\}$. We say that two points $P, Q$ of $\cG^{+}$ are \textit{$m$-equivalent}, if there is a line $\ell$ of slope $m$ that contains both points and there exists an element $c\in \F_p^*$ such that $\{P, Q\}\subseteq H(c,p)$.
\end{definition}

For each direction $m$, this defines an equivalence relation on the normal points of $\cG^+$.

Our aim is the following. We first produce a subset $T$ of $\cG^+$ that contains at most $3$ points in every $m$-equivalence class for every direction $m$, and then our no-$4$-in-line construction will be $T':=T\cap H(c,p)$ for one fixed value $c\in \F_p^*$.

In $\cG^{+}$, every normal congruence class is represented by 9 points arranged in a $3\times 3$ grid-like manner, and by \autoref{obs:atmosttwoclasses} and \autoref{obs:meet_classes}(a), every line can meet $H(c,p)$ in at most two congruence classes, or in at most one if the line is horizontal or vertical. So on horizontal and vertical lines, $T'$ can have at most $3$ points. There are no two congruent points on a line of modulus $\ge 3$, therefore such lines will meet $T'$ in at most $2$ points. So we only need to consider lines of modulus $1$ or $2$.

The class-pairing properties of $H(c,p)$ described in  \autoref{obs:meet_classes} and \autoref{obs:meet_classes_2} will enable us to partition $\cG^{+}$ into regions for every direction $m\in \left\{\pm 1, \pm 2, \pm \frac12\right\}$, such that each region only contains pairwise $m$-inequivalent points, and any two regions are either pointwise $m$-equivalent copies of each other or contain no pair of equivalent points at all.

To obtain our set $T$, we keep at most $3$ regions in every such pointwise $m$-equivalent family of regions, and completely erase the other regions. To maximize the number of remaining points, our intention is to choose the erased regions for the different directions so that they overlap each other as much as possible.

In this subsection, the main grid lines in our figures are always of the form $x=k\cdot\frac{p}{2}$ or $y=\ell\cdot\frac{p}{2}$ for some $k,\ell\in\zz$. As a result of this, our figures do not always emphasize the fact that our constructed sets only contain grid points with integer coordinates, but this is always implied. Also, for the sake of simplicity, any black segments on the figures (including grid lines) are considered not to be included in our final construction $T$, even though some of the areas resemble the regions $E$ and $F$ as defined in \autoref{sec:prelim}. Since $H(c,p)$ meets every such segment in at most a constant number of points, this only results in a $O(1)$ decrease in the total size of the construction. The endpoints of such black segments are always of the form $\left(k\cdot \frac{p}{8}, \ell\cdot \frac{p}{8}\right)$ for some $k,\ell\in \zz$.

According to \autoref{obs:meet_classes}(b), for lines of slope $+1$ that meet $H(c,p)$ in two congruence classes, the two classes are of the form $(x,y)$ and $(-y,-x)$. Using this fact, it is simple to partition $\cG^{+}$ for $m=+1$ in the required way. \autoref{fig:slopepm1partition}(a) shows only those region classes that contain at least four $(+1)$-equivalent copies of each region.

It is easy to see that two points of $\cG^+$ are $(+1)$-equivalent if and only if their images under the reflection in the line $x=\frac32p$ are $(-1)$-equivalent. Therefore by using this symmetry, we also get a corresponding decomposition for $m=-1$ as shown in \autoref{fig:slopepm1partition}(b).

\begin{figure}[ht!]
	\centering
	\begin{subfigure}{0.47\textwidth}%
		\centering%
		\definecolor{qqffff}{rgb}{0.,1.,1.}
\definecolor{ududff}{rgb}{0.30196078431372547,0.30196078431372547,1.}
\begin{tikzpicture}[line cap=round,line join=round,>=triangle 45,x=0.19cm,y=0.19cm, scale=0.75]
\clip(-4.,-10.5) rectangle (41.,34.5);

% region colours
\definecolor{slopeoneA}{RGB}{244,124,124}
\definecolor{slopeoneB}{RGB}{123,170,247}
\definecolor{slopeoneC}{RGB}{127,214,141}
\definecolor{slopeoneD}{RGB}{255,193,102}
\definecolor{slopeoneE}{RGB}{198,156,242}
\definecolor{slopeoneF}{RGB}{109,221,212}
\definecolor{slopeoneG}{RGB}{245,228,109}
\definecolor{slopeoneH}{RGB}{244,145,190}
\definecolor{slopeoneI}{RGB}{214,176,120}
\definecolor{slopeoneJ}{RGB}{155,196,230}

% helpers
\def\NumStyle{\bfseries\fontsize{9}{9}\selectfont}

\def\FullSquare#1#2#3#4{%
  \fill[fill=#3,fill opacity=0.82] (#1,#2) rectangle ++(6.5,6.5);
  \node[font=\NumStyle] at ({#1+3.25},{#2+3.25}) {#4};
}
\def\HalfTL#1#2#3#4{%
  \fill[fill=#3,fill opacity=0.82] (#1,{#2+6.5}) -- (#1,#2) -- ({#1+6.5},{#2+6.5}) -- cycle;
  \node[font=\NumStyle] at ({#1+2.0},{#2+4.5}) {#4};
}
\def\HalfBR#1#2#3#4{%
  \fill[fill=#3,fill opacity=0.82] (#1,#2) -- ({#1+6.5},#2) -- ({#1+6.5},{#2+6.5}) -- cycle;
  \node[font=\NumStyle] at ({#1+4.5},{#2+2.0}) {#4};
}
\def\HalfBL#1#2#3#4{%
  \fill[fill=#3,fill opacity=0.82] (#1,{#2+6.5}) -- (#1,#2) -- ({#1+6.5},#2) -- cycle;
  \node[font=\NumStyle] at ({#1+2.0},{#2+2.0}) {#4};
}
\def\HalfTR#1#2#3#4{%
  \fill[fill=#3,fill opacity=0.82] (#1,{#2+6.5}) -- ({#1+6.5},{#2+6.5}) -- ({#1+6.5},#2) -- cycle;
  \node[font=\NumStyle] at ({#1+4.5},{#2+4.5}) {#4};
}

% Area 1
\HalfBL{0}{6.5}{slopeoneA}{1}          % d1 BL
\HalfTR{0}{6.5}{slopeoneA}{1}          % d1 TR
\HalfBL{13}{19.5}{slopeoneA}{1}        % b3 BL
\HalfTR{13}{19.5}{slopeoneA}{1}        % b3 TR

% Area 2
\HalfBL{6.5}{13}{slopeoneB}{2}         % c2 BL
\HalfTR{6.5}{13}{slopeoneB}{2}         % c2 TR
\HalfBL{19.5}{26}{slopeoneB}{2}        % a4 BL
\HalfTR{19.5}{26}{slopeoneB}{2}        % a4 TR

% Area 3
\HalfTL{0}{0}{slopeoneC}{3}            % e1 TL
\HalfTL{6.5}{6.5}{slopeoneC}{3}        % d2 TL
\HalfTL{13}{13}{slopeoneC}{3}          % c3 TL
\HalfTL{19.5}{19.5}{slopeoneC}{3}      % b4 TL
\HalfTL{26}{26}{slopeoneC}{3}          % a5 TL

% Area 4
\HalfBR{0}{0}{slopeoneD}{4}            % e1 BR
\HalfBR{6.5}{6.5}{slopeoneD}{4}        % d2 BR
\HalfBR{13}{13}{slopeoneD}{4}          % c3 BR
\HalfBR{19.5}{19.5}{slopeoneD}{4}      % b4 BR
\HalfBR{26}{26}{slopeoneD}{4}          % a5 BR

% Area 5
\HalfBL{0}{-6.5}{slopeoneE}{5}         % f1 BL
\HalfTR{0}{-6.5}{slopeoneE}{5}         % f1 TR
\HalfBL{13}{6.5}{slopeoneE}{5}         % d3 BL
\HalfTR{13}{6.5}{slopeoneE}{5}         % d3 TR
\HalfBL{26}{19.5}{slopeoneE}{5}        % b5 BL
\HalfTR{26}{19.5}{slopeoneE}{5}        % b5 TR

% Area 6
\HalfBL{6.5}{0}{slopeoneF}{6}          % e2 BL
\HalfTR{6.5}{0}{slopeoneF}{6}          % e2 TR
\HalfBL{19.5}{13}{slopeoneF}{6}        % c4 BL
\HalfTR{19.5}{13}{slopeoneF}{6}        % c4 TR
\HalfBL{32.5}{26}{slopeoneF}{6}        % a6 BL
\HalfTR{32.5}{26}{slopeoneF}{6}        % a6 TR

% Area 7
\HalfTL{6.5}{-6.5}{slopeoneG}{7}       % f2 TL
\HalfTL{13}{0}{slopeoneG}{7}           % e3 TL
\HalfTL{19.5}{6.5}{slopeoneG}{7}       % d4 TL
\HalfTL{26}{13}{slopeoneG}{7}          % c5 TL
\HalfTL{32.5}{19.5}{slopeoneG}{7}      % b6 TL

% Area 8
\HalfBR{6.5}{-6.5}{slopeoneH}{8}       % f2 BR
\HalfBR{13}{0}{slopeoneH}{8}           % e3 BR
\HalfBR{19.5}{6.5}{slopeoneH}{8}       % d4 BR
\HalfBR{26}{13}{slopeoneH}{8}          % c5 BR
\HalfBR{32.5}{19.5}{slopeoneH}{8}      % b6 BR

% Area 9
\HalfBL{13}{-6.5}{slopeoneI}{9}        % f3 BL
\HalfTR{13}{-6.5}{slopeoneI}{9}        % f3 TR
\HalfBL{26}{6.5}{slopeoneI}{9}         % d5 BL
\HalfTR{26}{6.5}{slopeoneI}{9}         % d5 TR

% Area 10
\HalfBL{19.5}{0}{slopeoneJ}{10}        % e4 BL
\HalfTR{19.5}{0}{slopeoneJ}{10}        % e4 TR
\HalfBL{32.5}{13}{slopeoneJ}{10}       % c6 BL
\HalfTR{32.5}{13}{slopeoneJ}{10}       % c6 TR

\draw [line width=2pt] (0.,32.5)-- (0.,-6.5);
\draw [line width=2pt] (13.,-6.5)-- (13.,32.5);
\draw [line width=2pt] (26.,32.5)-- (26.,-6.5);
\draw [line width=2pt] (0.,0.)-- (39.,0.);
\draw [line width=2pt] (0.,26.)-- (39.,26.);
\draw [line width=2pt] (39.,13.)-- (0.,13.);
\draw [line width=2pt] (39.,32.5)-- (39.,-6.5);
\draw [line width=1pt] (6.5,32.5)-- (6.5,-6.5);
\draw [line width=1pt] (19.5,-6.5)-- (19.5,32.5);
\draw [line width=1pt] (32.5,32.5)-- (32.5,-6.5);
\draw [line width=1pt] (0.,32.5)-- (39.,32.5);
\draw [line width=1pt] (39.,19.5)-- (0.,19.5);
\draw [line width=1pt] (0.,6.5)-- (39.,6.5);
\draw [line width=1pt] (39.,-6.5)-- (0.,-6.5);

\draw [line width=1pt] (0,13)--(6.5,6.5);        % d1
\draw [line width=1pt] (13,26)--(19.5,19.5);     % b3

\draw [line width=1pt] (6.5,19.5)--(13,13);      % c2
\draw [line width=1pt] (19.5,32.5)--(26,26);     % a4

\draw [line width=1pt] (0,0)--(6.5,-6.5);        % f1
\draw [line width=1pt] (13,13)--(19.5,6.5);      % d3
\draw [line width=1pt] (26,26)--(32.5,19.5);     % b5

\draw [line width=1pt] (6.5,6.5)--(13,0);        % e2
\draw [line width=1pt] (19.5,19.5)--(26,13);     % c4
\draw [line width=1pt] (32.5,32.5)--(39,26);     % a6

\draw [line width=1pt] (13,0)--(19.5,-6.5);      % f3
\draw [line width=1pt] (26,13)--(32.5,6.5);      % d5

\draw [line width=1pt] (19.5,6.5)--(26,0);       % e4
\draw [line width=1pt] (32.5,19.5)--(39,13);     % c6

\draw [line width=1pt] (0,0)--(6.5,6.5);         % e1
\draw [line width=1pt] (6.5,6.5)--(13,13);       % d2
\draw [line width=1pt] (13,13)--(19.5,19.5);     % c3
\draw [line width=1pt] (19.5,19.5)--(26,26);     % b4
\draw [line width=1pt] (26,26)--(32.5,32.5);     % a5

\draw [line width=1pt] (6.5,-6.5)--(13,0);       % f2
\draw [line width=1pt] (13,0)--(19.5,6.5);       % e3
\draw [line width=1pt] (19.5,6.5)--(26,13);      % d4
\draw [line width=1pt] (26,13)--(32.5,19.5);     % c5
\draw [line width=1pt] (32.5,19.5)--(39,26);     % b6

\draw (0,-8.5) node[anchor=center] {$0$};
\draw (6.5,-8.5) node[anchor=center] {$\frac12p$};
\draw (13,-8.5) node[anchor=center] {$p$};
\draw (19.5,-8.5) node[anchor=center] {$\frac32p$};
\draw (26,-8.5) node[anchor=center] {$2p$};
\draw (32.5,-8.5) node[anchor=center] {$\frac52p$};
\draw (39,-8.5) node[anchor=center] {$3p$};
\draw (-2.5,-6.5) node[anchor=center] {$-\frac12p$};
\draw (-2.5,0) node[anchor=center] {$0$};
\draw (-2.5,6.5) node[anchor=center] {$\frac12p$};
\draw (-2.5,13) node[anchor=center] {$p$};
\draw (-2.5,19.5) node[anchor=center] {$\frac32p$};
\draw (-2.5,26) node[anchor=center] {$2p$};
\draw (-2.5,32.5) node[anchor=center] {$\frac52p$};
\end{tikzpicture}%
  \vspace{0.3cm}
		\caption{$m=+1$}%
		\label{fig:slopepm1partition:a}%
	\end{subfigure}%
	\hfill%
	\begin{subfigure}{0.47\textwidth}
		\centering
		\definecolor{qqffff}{rgb}{0.,1.,1.}
\definecolor{ududff}{rgb}{0.30196078431372547,0.30196078431372547,1.}
\begin{tikzpicture}[line cap=round,line join=round,>=triangle 45,x=0.19cm,y=0.19cm, scale=0.75]
\clip(-4.,-10.5) rectangle (41.,34.5);

% region colours
\definecolor{slopeoneA}{RGB}{244,124,124}
\definecolor{slopeoneB}{RGB}{123,170,247}
\definecolor{slopeoneC}{RGB}{127,214,141}
\definecolor{slopeoneD}{RGB}{255,193,102}
\definecolor{slopeoneE}{RGB}{198,156,242}
\definecolor{slopeoneF}{RGB}{109,221,212}
\definecolor{slopeoneG}{RGB}{245,228,109}
\definecolor{slopeoneH}{RGB}{244,145,190}
\definecolor{slopeoneI}{RGB}{214,176,120}
\definecolor{slopeoneJ}{RGB}{155,196,230}

% helpers
\def\NumStyle{\bfseries\fontsize{9}{9}\selectfont}

\def\FullSquare#1#2#3#4{%
  \fill[fill=#3,fill opacity=0.82] (#1,#2) rectangle ++(6.5,6.5);
  \node[font=\NumStyle] at ({#1+3.25},{#2+3.25}) {#4};
}
\def\HalfTL#1#2#3#4{%
  \fill[fill=#3,fill opacity=0.82] (#1,{#2+6.5}) -- (#1,#2) -- ({#1+6.5},{#2+6.5}) -- cycle;
  \node[font=\NumStyle] at ({#1+2.0},{#2+4.5}) {#4};
}
\def\HalfBR#1#2#3#4{%
  \fill[fill=#3,fill opacity=0.82] (#1,#2) -- ({#1+6.5},#2) -- ({#1+6.5},{#2+6.5}) -- cycle;
  \node[font=\NumStyle] at ({#1+4.5},{#2+2.0}) {#4};
}
\def\HalfBL#1#2#3#4{%
  \fill[fill=#3,fill opacity=0.82] (#1,{#2+6.5}) -- (#1,#2) -- ({#1+6.5},#2) -- cycle;
  \node[font=\NumStyle] at ({#1+2.0},{#2+2.0}) {#4};
}
\def\HalfTR#1#2#3#4{%
  \fill[fill=#3,fill opacity=0.82] (#1,{#2+6.5}) -- ({#1+6.5},{#2+6.5}) -- ({#1+6.5},#2) -- cycle;
  \node[font=\NumStyle] at ({#1+4.5},{#2+4.5}) {#4};
}

\begin{scope}[cm={-1,0,0,1,(39,0)}]

% Area 11
\HalfBL{0}{6.5}{slopeoneA}{11}
\HalfTR{0}{6.5}{slopeoneA}{11}
\HalfBL{13}{19.5}{slopeoneA}{11}
\HalfTR{13}{19.5}{slopeoneA}{11}

% Area 12
\HalfBL{6.5}{13}{slopeoneB}{12}
\HalfTR{6.5}{13}{slopeoneB}{12}
\HalfBL{19.5}{26}{slopeoneB}{12}
\HalfTR{19.5}{26}{slopeoneB}{12}

% Area 13
\HalfTL{0}{0}{slopeoneC}{13}
\HalfTL{6.5}{6.5}{slopeoneC}{13}
\HalfTL{13}{13}{slopeoneC}{13}
\HalfTL{19.5}{19.5}{slopeoneC}{13}
\HalfTL{26}{26}{slopeoneC}{13}

% Area 14
\HalfBR{0}{0}{slopeoneD}{14}
\HalfBR{6.5}{6.5}{slopeoneD}{14}
\HalfBR{13}{13}{slopeoneD}{14}
\HalfBR{19.5}{19.5}{slopeoneD}{14}
\HalfBR{26}{26}{slopeoneD}{14}

% Area 15
\HalfBL{0}{-6.5}{slopeoneE}{15}
\HalfTR{0}{-6.5}{slopeoneE}{15}
\HalfBL{13}{6.5}{slopeoneE}{15}
\HalfTR{13}{6.5}{slopeoneE}{15}
\HalfBL{26}{19.5}{slopeoneE}{15}
\HalfTR{26}{19.5}{slopeoneE}{15}

% Area 16
\HalfBL{6.5}{0}{slopeoneF}{16}
\HalfTR{6.5}{0}{slopeoneF}{16}
\HalfBL{19.5}{13}{slopeoneF}{16}
\HalfTR{19.5}{13}{slopeoneF}{16}
\HalfBL{32.5}{26}{slopeoneF}{16}
\HalfTR{32.5}{26}{slopeoneF}{16}

% Area 17
\HalfTL{6.5}{-6.5}{slopeoneG}{17}
\HalfTL{13}{0}{slopeoneG}{17}
\HalfTL{19.5}{6.5}{slopeoneG}{17}
\HalfTL{26}{13}{slopeoneG}{17}
\HalfTL{32.5}{19.5}{slopeoneG}{17}

% Area 18
\HalfBR{6.5}{-6.5}{slopeoneH}{18}
\HalfBR{13}{0}{slopeoneH}{18}
\HalfBR{19.5}{6.5}{slopeoneH}{18}
\HalfBR{26}{13}{slopeoneH}{18}
\HalfBR{32.5}{19.5}{slopeoneH}{18}

% Area 19
\HalfBL{13}{-6.5}{slopeoneI}{19}
\HalfTR{13}{-6.5}{slopeoneI}{19}
\HalfBL{26}{6.5}{slopeoneI}{19}
\HalfTR{26}{6.5}{slopeoneI}{19}

% Area 20
\HalfBL{19.5}{0}{slopeoneJ}{20}
\HalfTR{19.5}{0}{slopeoneJ}{20}
\HalfBL{32.5}{13}{slopeoneJ}{20}
\HalfTR{32.5}{13}{slopeoneJ}{20}

\end{scope}

\draw [line width=2pt] (0.,32.5)-- (0.,-6.5);
\draw [line width=2pt] (13.,-6.5)-- (13.,32.5);
\draw [line width=2pt] (26.,32.5)-- (26.,-6.5);
\draw [line width=2pt] (0.,0.)-- (39.,0.);
\draw [line width=2pt] (0.,26.)-- (39.,26.);
\draw [line width=2pt] (39.,13.)-- (0.,13.);
\draw [line width=2pt] (39.,32.5)-- (39.,-6.5);
\draw [line width=1pt] (6.5,32.5)-- (6.5,-6.5);
\draw [line width=1pt] (19.5,-6.5)-- (19.5,32.5);
\draw [line width=1pt] (32.5,32.5)-- (32.5,-6.5);
\draw [line width=1pt] (0.,32.5)-- (39.,32.5);
\draw [line width=1pt] (39.,19.5)-- (0.,19.5);
\draw [line width=1pt] (0.,6.5)-- (39.,6.5);
\draw [line width=1pt] (39.,-6.5)-- (0.,-6.5);

\begin{scope}[cm={-1,0,0,1,(39,0)}]

\draw [line width=1pt] (0,13)--(6.5,6.5);        % d1
\draw [line width=1pt] (13,26)--(19.5,19.5);     % b3

\draw [line width=1pt] (6.5,19.5)--(13,13);      % c2
\draw [line width=1pt] (19.5,32.5)--(26,26);     % a4

\draw [line width=1pt] (0,0)--(6.5,-6.5);        % f1
\draw [line width=1pt] (13,13)--(19.5,6.5);      % d3
\draw [line width=1pt] (26,26)--(32.5,19.5);     % b5

\draw [line width=1pt] (6.5,6.5)--(13,0);        % e2
\draw [line width=1pt] (19.5,19.5)--(26,13);     % c4
\draw [line width=1pt] (32.5,32.5)--(39,26);     % a6

\draw [line width=1pt] (13,0)--(19.5,-6.5);      % f3
\draw [line width=1pt] (26,13)--(32.5,6.5);      % d5

\draw [line width=1pt] (19.5,6.5)--(26,0);       % e4
\draw [line width=1pt] (32.5,19.5)--(39,13);     % c6

\draw [line width=1pt] (0,0)--(6.5,6.5);         % e1
\draw [line width=1pt] (6.5,6.5)--(13,13);       % d2
\draw [line width=1pt] (13,13)--(19.5,19.5);     % c3
\draw [line width=1pt] (19.5,19.5)--(26,26);     % b4
\draw [line width=1pt] (26,26)--(32.5,32.5);     % a5

\draw [line width=1pt] (6.5,-6.5)--(13,0);       % f2
\draw [line width=1pt] (13,0)--(19.5,6.5);       % e3
\draw [line width=1pt] (19.5,6.5)--(26,13);      % d4
\draw [line width=1pt] (26,13)--(32.5,19.5);     % c5
\draw [line width=1pt] (32.5,19.5)--(39,26);     % b6

\end{scope}

\draw (0,-8.5) node[anchor=center] {$0$};
\draw (6.5,-8.5) node[anchor=center] {$\frac12p$};
\draw (13,-8.5) node[anchor=center] {$p$};
\draw (19.5,-8.5) node[anchor=center] {$\frac32p$};
\draw (26,-8.5) node[anchor=center] {$2p$};
\draw (32.5,-8.5) node[anchor=center] {$\frac52p$};
\draw (39,-8.5) node[anchor=center] {$3p$};
\draw (-2.5,-6.5) node[anchor=center] {$-\frac12p$};
\draw (-2.5,0) node[anchor=center] {$0$};
\draw (-2.5,6.5) node[anchor=center] {$\frac12p$};
\draw (-2.5,13) node[anchor=center] {$p$};
\draw (-2.5,19.5) node[anchor=center] {$\frac32p$};
\draw (-2.5,26) node[anchor=center] {$2p$};
\draw (-2.5,32.5) node[anchor=center] {$\frac52p$};
\end{tikzpicture}%
    \vspace{0.3cm}
		\caption{$m=-1$}
		\label{fig:slopepm1partition:b}
	\end{subfigure}%
	\caption{Decompositions of $\cG^+$ into families of $m$-equivalent regions for $m=\pm 1$, only showing families containing $\ge 4$ regions. For fixed $m$, each family consists of all triangular regions with the same number.}
	\label{fig:slopepm1partition}
\end{figure}

\begin{figure}[ht!]
	\centering
	\begin{minipage}{0.47\textwidth}
		\centering
		\definecolor{qqffff}{rgb}{0.,1.,1.}
\definecolor{ududff}{rgb}{0.30196078431372547,0.30196078431372547,1.}
\begin{tikzpicture}[line cap=round,line join=round,>=triangle 45,x=0.19cm,y=0.19cm, scale=0.85]
\clip(-4.,-10.0) rectangle (41.,34.5);
\fill[line width=0.pt,color=qqffff,fill=qqffff,fill opacity=1.0] (0.,0.) -- (6.5,0.) -- (6.5,-6.5) -- (0.,-6.5) -- cycle;
\fill[line width=0.pt,color=qqffff,fill=qqffff,fill opacity=1.0] (6.5,0.) -- (13,0.) -- (13,-6.5) -- (6.5,-6.5) -- cycle;
\fill[line width=0.pt,color=qqffff,fill=qqffff,fill opacity=1.0] (13,0.) -- (19.5,0.) -- (19.5,-6.5) -- (13,-6.5) -- cycle;
\fill[line width=0.pt,color=qqffff,fill=qqffff,fill opacity=1.0] (19.5,0.) -- (26,0.) -- (26,-6.5) -- (19.5,-6.5) -- cycle;
\fill[line width=0.pt,color=qqffff,fill=qqffff,fill opacity=1.0] (26,0.) -- (32.5,0.) -- (32.5,-6.5) -- (26,-6.5) -- cycle;
\fill[line width=0.pt,color=qqffff,fill=qqffff,fill opacity=1.0] (32.5,0.) -- (39,0.) -- (39,-6.5) -- (32.5,-6.5) -- cycle;
\fill[line width=0.pt,color=qqffff,fill=qqffff,fill opacity=1.0] (0.,32.5) -- (6.5,32.5) -- (6.5,26) -- (0.,26) -- cycle;
\fill[line width=0.pt,color=qqffff,fill=qqffff,fill opacity=1.0] (6.5,32.5) -- (13,32.5) -- (13,26) -- (6.5,26) -- cycle;
\fill[line width=0.pt,color=qqffff,fill=qqffff,fill opacity=1.0] (13,32.5) -- (19.5,32.5) -- (19.5,26) -- (13,26) -- cycle;
\fill[line width=0.pt,color=qqffff,fill=qqffff,fill opacity=1.0] (19.5,32.5) -- (26,32.5) -- (26,26) -- (19.5,26) -- cycle;
\fill[line width=0.pt,color=qqffff,fill=qqffff,fill opacity=1.0] (26,32.5) -- (32.5,32.5) -- (32.5,26) -- (26,26) -- cycle;
\fill[line width=0.pt,color=qqffff,fill=qqffff,fill opacity=1.0] (32.5,32.5) -- (39,32.5) -- (39,26) -- (32.5,26) -- cycle;
\fill[line width=0.pt,color=qqffff,fill=qqffff,fill opacity=1.0] (0.,6.5) -- (6.5,6.5) -- (6.5,0) -- (0.,0) -- cycle;
\fill[line width=0.pt,color=qqffff,fill=qqffff,fill opacity=1.0] (0.,13) -- (6.5,13) -- (6.5,6.5) -- (0.,6.5) -- cycle;
\fill[line width=0.pt,color=qqffff,fill=qqffff,fill opacity=1.0] (0.,19.5) -- (6.5,19.5) -- (6.5,13) -- (0.,13) -- cycle;
\fill[line width=0.pt,color=qqffff,fill=qqffff,fill opacity=1.0] (0.,26) -- (6.5,26) -- (6.5,19.5) -- (0.,19.5) -- cycle;
\fill[line width=0.pt,color=qqffff,fill=qqffff,fill opacity=1.0] (32.5,6.5) -- (39,6.5) -- (39,0) -- (32.5,0) -- cycle;
\fill[line width=0.pt,color=qqffff,fill=qqffff,fill opacity=1.0] (32.5,13) -- (39,13) -- (39,6.5) -- (32.5,6.5) -- cycle;
\fill[line width=0.pt,color=qqffff,fill=qqffff,fill opacity=1.0] (32.5,19.5) -- (39,19.5) -- (39,13) -- (32.5,13) -- cycle;
\fill[line width=0.pt,color=qqffff,fill=qqffff,fill opacity=1.0] (32.5,26) -- (39,26) -- (39,19.5) -- (32.5,19.5) -- cycle;
\fill[line width=0.pt,color=qqffff,fill=qqffff,fill opacity=1.0] (6.5,6.5) -- (13.,0.) -- (6.5,0.) -- cycle;
\fill[line width=0.pt,color=qqffff,fill=qqffff,fill opacity=1.0] (6.5,19.5) -- (13.,13) -- (6.5,13) -- cycle;
\fill[line width=0.pt,color=qqffff,fill=qqffff,fill opacity=1.0] (6.5,6.5)--(6.5,13)--(13,13)--cycle;
\fill[line width=0.pt,color=qqffff,fill=qqffff,fill opacity=1.0] (6.5,19.5)--(6.5,26)--(13,26)--cycle;
\fill[line width=0.pt,color=qqffff,fill=qqffff,fill opacity=1.0] (13,19.5)--(19.5,19.5)--(13,26)--cycle;
\fill[line width=0.pt,color=qqffff,fill=qqffff,fill opacity=1.0] (19.5,19.5)--(26,19.5)--(26,26)--cycle;
\fill[line width=0.pt,color=qqffff,fill=qqffff,fill opacity=1.0] (26,26)--(32.5,26)--(32.5,19.5)--cycle;
\fill[line width=0.pt,color=qqffff,fill=qqffff,fill opacity=1.0] (26,13)--(32.5,13)--(32.5,19.5)--cycle;
\fill[line width=0.pt,color=qqffff,fill=qqffff,fill opacity=1.0] (26,13)--(32.5,13)--(32.5,6.5)--cycle;
\fill[line width=0.pt,color=qqffff,fill=qqffff,fill opacity=1.0] (26,0)--(32.5,0)--(32.5,6.5)--cycle;
\fill[line width=0.pt,color=qqffff,fill=qqffff,fill opacity=1.0] (19.5,6.5)--(26,6.5)--(26,0)--cycle;
\fill[line width=0.pt,color=qqffff,fill=qqffff,fill opacity=1.0](13,6.5)--(19.5,6.5)--(13,0)--cycle;
\draw [line width=2pt] (0.,32.5)-- (0.,-6.5);
\draw [line width=2pt] (13.,-6.5)-- (13.,32.5);
\draw [line width=2pt] (26.,32.5)-- (26.,-6.5);
\draw [line width=2pt] (0.,0.)-- (39.,0.);
\draw [line width=2pt] (0.,26.)-- (39.,26.);
\draw [line width=2pt] (39.,13.)-- (0.,13.);
\draw [line width=2pt] (39.,32.5)-- (39.,-6.5);
\draw [line width=1pt] (6.5,32.5)-- (6.5,-6.5);
\draw [line width=1pt] (19.5,-6.5)-- (19.5,32.5);
\draw [line width=1pt] (32.5,32.5)-- (32.5,-6.5);
\draw [line width=1pt] (0.,32.5)-- (39.,32.5);
\draw [line width=1pt] (39.,19.5)-- (0.,19.5);
\draw [line width=1pt] (0.,6.5)-- (39.,6.5);
\draw [line width=1pt] (39.,-6.5)-- (0.,-6.5);
\draw [line width=1pt] (6.5,6.5)-- (13.,0.);
\draw [line width=1pt] (6.5,19.5)-- (13.,13);
\draw [line width=1pt] (6.5,6.5)-- (13.,13);
\draw [line width=1pt] (6.5,19.5)-- (13.,26);
\draw [line width=1pt] (13,26)--(19.5,19.5);
\draw [line width=1pt] (19.5,19.5)--(26,26);
\draw [line width=1pt] (26,26)--(32.5,19.5);
\draw [line width=1pt] (26,13)--(32.5,19.5);
\draw [line width=1pt] (26,13)--(32.5,6.5);
\draw [line width=1pt] (26,0)--(32.5,6.5);
\draw [line width=1pt] (19.5,6.5)--(26,0);
\draw [line width=1pt] (13,0)--(19.5,6.5);

\draw (0,-8.5) node[anchor=center] {$0$};
\draw (6.5,-8.5) node[anchor=center] {$\frac12p$};
\draw (13,-8.5) node[anchor=center] {$p$};
\draw (19.5,-8.5) node[anchor=center] {$\frac32p$};
\draw (26,-8.5) node[anchor=center] {$2p$};
\draw (32.5,-8.5) node[anchor=center] {$\frac52p$};
\draw (39,-8.5) node[anchor=center] {$3p$};
\draw (-2.5,-6.5) node[anchor=center] {$-\frac12p$};
\draw (-2.5,0) node[anchor=center] {$0$};
\draw (-2.5,6.5) node[anchor=center] {$\frac12p$};
\draw (-2.5,13) node[anchor=center] {$p$};
\draw (-2.5,19.5) node[anchor=center] {$\frac32p$};
\draw (-2.5,26) node[anchor=center] {$2p$};
\draw (-2.5,32.5) node[anchor=center] {$\frac52p$};
\end{tikzpicture}
        \captionsetup{skip=3pt}
		\captionof{figure}{The set $T_1$.\\~\\~}
		\label{fig:slope1_fig1}
	\end{minipage}
	\hfill
	\begin{minipage}{0.47\textwidth}
		\centering
		\definecolor{qqffff}{rgb}{0.,1.,1.}
\definecolor{ududff}{rgb}{0.30196078431372547,0.30196078431372547,1.}
\definecolor{lightmagenta}{RGB}{244,145,190}

\begin{tikzpicture}[line cap=round,line join=round,>=triangle 45,x=0.19cm,y=0.19cm, scale=0.85]
\clip(-4.,-10.0) rectangle (41.,34.5);

% magenta regions
\fill[line width=0pt,color=lightmagenta,fill=lightmagenta,fill opacity=0.82] (13,32.5) -- (19.5,32.5) -- (19.5,26) -- (13,26) -- cycle; % a3

\fill[line width=0pt,color=lightmagenta,fill=lightmagenta,fill opacity=0.82] (19.5,32.5) -- (19.5,26) -- (26,26) -- cycle; % a4 BL
\fill[line width=0pt,color=lightmagenta,fill=lightmagenta,fill opacity=0.82] (26,26) -- (26,32.5) -- (32.5,32.5) -- cycle; % a5 TL
\fill[line width=0pt,color=lightmagenta,fill=lightmagenta,fill opacity=0.82] (32.5,32.5) -- (39,32.5) -- (39,26) -- cycle; % a6 TR

\fill[line width=0pt,color=lightmagenta,fill=lightmagenta,fill opacity=0.82] (13,26) -- (13,19.5) -- (19.5,19.5) -- cycle; % b3 BL
\fill[line width=0pt,color=lightmagenta,fill=lightmagenta,fill opacity=0.82] (19.5,19.5) -- (26,19.5) -- (26,26) -- cycle; % b4 BR
\fill[line width=0pt,color=lightmagenta,fill=lightmagenta,fill opacity=0.82] (26,26) -- (32.5,26) -- (32.5,19.5) -- cycle; % b5 TR
\fill[line width=0pt,color=lightmagenta,fill=lightmagenta,fill opacity=0.82] (32.5,26) -- (39,26) -- (39,19.5) -- (32.5,19.5) -- cycle; % b6

% translated copy: 4 rows down, 2 columns left
\fill[line width=0pt,color=lightmagenta,fill=lightmagenta,fill opacity=0.82] (0,6.5) -- (6.5,6.5) -- (6.5,0) -- (0,0) -- cycle; % e1

\fill[line width=0pt,color=lightmagenta,fill=lightmagenta,fill opacity=0.82] (6.5,6.5) -- (6.5,0) -- (13,0) -- cycle; % e2 BL
\fill[line width=0pt,color=lightmagenta,fill=lightmagenta,fill opacity=0.82] (13,0) -- (13,6.5) -- (19.5,6.5) -- cycle; % e3 TL
\fill[line width=0pt,color=lightmagenta,fill=lightmagenta,fill opacity=0.82] (19.5,6.5) -- (26,6.5) -- (26,0) -- cycle; % e4 TR

\fill[line width=0pt,color=lightmagenta,fill=lightmagenta,fill opacity=0.82] (0,0) -- (0,-6.5) -- (6.5,-6.5) -- cycle; % f1 BL
\fill[line width=0pt,color=lightmagenta,fill=lightmagenta,fill opacity=0.82] (6.5,-6.5) -- (13,-6.5) -- (13,0) -- cycle; % f2 BR
\fill[line width=0pt,color=lightmagenta,fill=lightmagenta,fill opacity=0.82] (13,0) -- (19.5,0) -- (19.5,-6.5) -- cycle; % f3 TR
\fill[line width=0pt,color=lightmagenta,fill=lightmagenta,fill opacity=0.82] (19.5,0) -- (26,0) -- (26,-6.5) -- (19.5,-6.5) -- cycle; % f4

\draw [line width=2pt] (0.,32.5)-- (0.,-6.5);
\draw [line width=2pt] (13.,-6.5)-- (13.,32.5);
\draw [line width=2pt] (26.,32.5)-- (26.,-6.5);
\draw [line width=2pt] (0.,0.)-- (39.,0.);
\draw [line width=2pt] (0.,26.)-- (39.,26.);
\draw [line width=2pt] (39.,13.)-- (0.,13.);
\draw [line width=2pt] (39.,32.5)-- (39.,-6.5);
\draw [line width=1pt] (6.5,32.5)-- (6.5,-6.5);
\draw [line width=1pt] (19.5,-6.5)-- (19.5,32.5);
\draw [line width=1pt] (32.5,32.5)-- (32.5,-6.5);
\draw [line width=1pt] (0.,32.5)-- (39.,32.5);
\draw [line width=1pt] (39.,19.5)-- (0.,19.5);
\draw [line width=1pt] (0.,6.5)-- (39.,6.5);
\draw [line width=1pt] (39.,-6.5)-- (0.,-6.5);

% only needed diagonals
\draw [line width=1pt] (19.5,32.5)--(26,26);     % a4
\draw [line width=1pt] (26,26)--(32.5,32.5);     % a5
\draw [line width=1pt] (32.5,32.5)--(39,26);     % a6

\draw [line width=1pt] (13,26)--(19.5,19.5);     % b3
\draw [line width=1pt] (19.5,19.5)--(26,26);     % b4
\draw [line width=1pt] (26,26)--(32.5,19.5);     % b5

\draw [line width=1pt] (6.5,6.5)--(13,0);        % e2
\draw [line width=1pt] (13,0)--(19.5,6.5);       % e3
\draw [line width=1pt] (19.5,6.5)--(26,0);       % e4

\draw [line width=1pt] (0,0)--(6.5,-6.5);        % f1
\draw [line width=1pt] (6.5,-6.5)--(13,0);       % f2
\draw [line width=1pt] (13,0)--(19.5,-6.5);      % f3

\draw (0,-8.5) node[anchor=center] {$0$};
\draw (6.5,-8.5) node[anchor=center] {$\frac12p$};
\draw (13,-8.5) node[anchor=center] {$p$};
\draw (19.5,-8.5) node[anchor=center] {$\frac32p$};
\draw (26,-8.5) node[anchor=center] {$2p$};
\draw (32.5,-8.5) node[anchor=center] {$\frac52p$};
\draw (39,-8.5) node[anchor=center] {$3p$};
\draw (-2.5,-6.5) node[anchor=center] {$-\frac12p$};
\draw (-2.5,0) node[anchor=center] {$0$};
\draw (-2.5,6.5) node[anchor=center] {$\frac12p$};
\draw (-2.5,13) node[anchor=center] {$p$};
\draw (-2.5,19.5) node[anchor=center] {$\frac32p$};
\draw (-2.5,26) node[anchor=center] {$2p$};
\draw (-2.5,32.5) node[anchor=center] {$\frac52p$};
\end{tikzpicture}
        \captionsetup{skip=3pt}
		\captionof{figure}{The set $T_1^*$, consisting of those points of $T_1$ which have another congruent copy in $T_1$ falling on the same slope $+2$ line.}
		\label{fig:slope1_fig2}
	\end{minipage}
\end{figure}

Let us now define an initial set $T_1\subseteq \cG^{+}$ via \autoref{fig:slope1_fig1}, by taking all normal points that lie in the cyan-coloured area. This includes 20 full blocks and 12 half-blocks (that is, the intersection of a block with either $E$ or $F$, up to the prior remark on the boundary not being included). This area has been constructed so that it contains exactly $3$ regions from every family in \autoref{fig:slopepm1partition}(a) and \autoref{fig:slopepm1partition}(b), and therefore $T_1$ meets every $m$-equivalence class of $\cG^+$ in at most $3$ points for $m=\pm 1$.

We continue with the directions $m\in \left\{\pm 2, \pm\frac12\right\}$. Note that lines in these directions can only meet $\cG^+$ in at most $2$ points per congruence class, and in at most $4$ points altogether. So such a line can lead to a violation of the no-$4$-in-line property only if it meets $\cG^+\cap H(c,p)$ in four points forming two congruent pairs, and all four of these points are in $T_1$.

First we focus on slope $+2$ lines. In $T_1$, a pair of congruent points on the same slope $+2$ line must differ by the vector $(p,2p)$. The set of points that belong to such pairs within $T_1$ is shown by the pink areas of \autoref{fig:slope1_fig2}, and we call this set $T_1^*$. For every slope $+2$ line that contains two pairs of pink points in $H(c,p)$, we must remove at least one of those four points, which we will perform by partitioning these points of $T_1^*$ into families of four $(+2)$-equivalent regions, and removing one region from each family.

As mentioned in \autoref{obs:meet_classes_2}, for every congruence class containing two points of $T_1^*$, we may determine the other congruence class lying on the same slope $+2$ line via the mod $p$ mapping $(x,y)\mapsto \left(-\frac{y}{2}, -2x\right)$. However, the $(+2)$-equivalence class of these two points may have size less than $4$, or even if its size is $4$, it is not necessarily the case that the other two points are also elements of $T_1^*$. (Note that the two classes may coincide, but in those cases the corresponding $(+2)$-equivalence class has size at most $2$.)

Therefore, by using the aforementioned mapping, we need to locate the points of $\cG^+$ that are $(+2)$-equivalent to some element of $T_1^*$. As the congruence class pairing transformations make use of the mod $p$ parity of coordinates (cf. \autoref{obs:meet_classes_2}), we will use the following convention to illustrate these sets of points. Areas shaded with single, or double horizontal lines, are considered to include only those points whose $y$-coordinate is odd mod $p$, or even mod $p$, respectively. Single, or double vertical lines correspondingly indicate $x$-coordinates which are odd, or even, mod $p$.

\autoref{fig:slope2_fig1} illustrates the procedure of finding members of a $(+2)$-equivalence class. Here, as an example, we would like to find all points $(+2)$-equivalent to some point in the blue region $B$, defined as
$$B:=\left\{(x_0,y_0): \tfrac12p<x_0<\tfrac34p, ~0<y_0<\tfrac12p, ~ y_0\equiv 1\pmod{2}\right\}.$$

Note that $B$ is the set of all points in the left half of the block $D_{0,0}$ having odd $y$-coordinate.

\begin{figure}[h!!]
		\centering
		\definecolor{shadered}{RGB}{244,124,124}
\definecolor{pointred}{RGB}{170,45,45}

\begin{tikzpicture}[line cap=round,line join=round,>=triangle 45,x=4.0cm,y=4.0cm, scale=0.75]
\clip(-0.31,-0.81) rectangle (3.15,2.65);

% shaded half-blocks

% Colour 1: blue, horizontalodd
\fill[pattern=horizontalodd, pattern color=blue] (0.5,0) rectangle (0.75,0.5);   % e2 LEFT
\fill[pattern=horizontalodd, pattern color=blue] (1.5,2) rectangle (1.75,2.5);    % a4 LEFT

% Colour 2: ochre, horizontaleven
\fill[pattern=horizontaleven, pattern color=OliveGreen] (1.25,1.5) rectangle (1.5,2);  % b3 RIGHT
\fill[pattern=horizontaleven, pattern color=OliveGreen] (0.25,-0.5) rectangle (0.5,0);  % f1 RIGHT

% main grid
\draw [line width=2pt] (0.,2.5)-- (0.,-0.5);
\draw [line width=2pt] (1.,-0.5)-- (1.,2.5);
\draw [line width=2pt] (2.,2.5)-- (2.,-0.5);
\draw [line width=2pt] (0.,0.)-- (3.,0.);
\draw [line width=2pt] (0.,2.)-- (3.,2.);
\draw [line width=2pt] (3.,1.)-- (0.,1.);
\draw [line width=2pt] (3.,2.5)-- (3.,-0.5);
\draw [line width=1pt] (0.5,2.5)-- (0.5,-0.5);
\draw [line width=1pt] (1.5,-0.5)-- (1.5,2.5);
\draw [line width=1pt] (2.5,2.5)-- (2.5,-0.5);
\draw [line width=1pt] (0.,2.5)-- (3.,2.5);
\draw [line width=1pt] (3.,1.5)-- (0.,1.5);
\draw [line width=1pt] (0.,0.5)-- (3.,0.5);
\draw [line width=1pt] (3.,-0.5)-- (0.,-0.5);

% only the required vertical half-block splits
\draw [line width=1pt] (0.75,0)--(0.75,0.5);     % e2
\draw [line width=1pt] (1.75,2)--(1.75,2.5);     % a4
\draw [line width=1pt] (1.25,1.5)--(1.25,2);     % b3
\draw [line width=1pt] (0.25,-0.5)--(0.25,0);    % f1

% colours for marked points (distinct from shaded regions)
\definecolor{pointblue}{RGB}{28,55,140}
\definecolor{pointgreen}{RGB}{35,100,45}

% slope +2 line through the marked points
\draw [line width=1pt, color=violet] (0.14,-0.55)--(1.69,2.55);

% marked blue points
\fill[pointblue] (0.62,0.41) circle (0.03);
\fill[pointblue] (1.62,2.41) circle (0.03);

% point labels
\draw (0.63,0.41) node[anchor=south,text=black,yshift=8] {$P$};
\draw (1.63,2.41) node[anchor=south,text=black,yshift=8] {$P'$};

% label for area B
\draw[color=pointblue] (0.8,0.25) node[anchor=center] {$B$};

% red/green auxiliary points
\fill[pointgreen] (0.295,-0.24) circle (0.03);   % Q_{0,1}
\fill[pointgreen] (1.295,1.76) circle (0.03);    % Q_{1,3}
\fill[pointred]  (1.295,0.76) circle (0.03);     % Q_{1,2}
\fill[pointred]  (0.295,0.76) circle (0.03);     % Q_{0,2}

% labels for the auxiliary points
\draw (0.330,-0.24) node[anchor=east,text=black,xshift=-4] {$Q_{0,1}$};
\draw (1.330,1.76) node[anchor=east,text=black,xshift=-4] {$Q_{1,3}$};
\draw (1.305,0.76) node[anchor=south,text=black] {$Q_{1,2}$};
\draw (0.305,0.76) node[anchor=south,text=black] {$Q_{0,2}$};

% axis labels
\draw (0,-0.65) node[anchor=center] {$0$};
\draw (0.5,-0.65) node[anchor=center] {$\frac12p$};
\draw (1,-0.65) node[anchor=center] {$p$};
\draw (1.5,-0.65) node[anchor=center] {$\frac32p$};
\draw (2,-0.65) node[anchor=center] {$2p$};
\draw (2.5,-0.65) node[anchor=center] {$\frac52p$};
\draw (3,-0.65) node[anchor=center] {$3p$};
\draw (-0.19,-0.5) node[anchor=center] {$-\frac12p$};
\draw (-0.19,0) node[anchor=center] {$0$};
\draw (-0.19,0.5) node[anchor=center] {$\frac12p$};
\draw (-0.19,1) node[anchor=center] {$p$};
\draw (-0.19,1.5) node[anchor=center] {$\frac32p$};
\draw (-0.19,2) node[anchor=center] {$2p$};
\draw (-0.19,2.5) node[anchor=center] {$\frac52p$};

\end{tikzpicture}
        \captionsetup{skip=3pt}
		\captionof{figure}{The points $P$, $P'$, $Q_{0,1}$, $Q_{1,3}$ form a $(+2)$-equivalence class. Even though the two red points are also congruent to $Q_{0,1}$ and $Q_{1,3}$, they do not belong to this equivalence class. The two green and two blue regions form a $(+2)$-equivalent family. Note that each region only contains points whose $y$-coordinate has a fixed mod $p$ parity: odd for single, even for double horizontal shading.}
		\label{fig:slope2_fig1}
\end{figure}

Consider the point $P=(x_0,y_0)$, and its congruent copy $P'=(x_0+p, y_0+2p)$. Note that the congruence class paired to $(x_0,y_0)$ is $\left(-\frac{y_0}{2}, -2x_0\right)$, which consists of all points of the form $Q_{\ell_1, \ell_2}:=\left(\frac{p-y_0}{2}+\ell_1 p, -2x_0+\ell_2 p\right)$ with $\ell_1, \ell_2\in \zz$, keeping in mind that $y_0$ is odd. However not all such points are on the same slope $+2$ line as $P$ and $P'$. Indeed, the difference vector $\left(\frac{p-y_0}{2}+\ell_1 p-x_0, -2x_0+\ell_2 p-y_0\right)$ must be a scalar multiple of $(1,2)$, which holds if and only if $\ell_2=2\ell_1+1$. So $Q_{0,1}$ and $Q_{1,3}$ lie on this line, whereas other points of $\cG^{+}$ congruent to them, such as the two red points with $Q_{0,2}$ and $Q_{1,2}$, do not.

It is easy to see that as $P$ ranges over $B$, the points $Q_{0,1}$ and $Q_{1,3}$ range over the two green regions marked in the figure (attaining all points whose $y$-coordinates are even mod $p$). Therefore, the two green and two blue regions form a family of $(+2)$-equivalent regions.

Using a similar algebraic reasoning, one may obtain full families of $(+2)$-equivalent regions covering every block that intersects the set $T_1^*$ shown in \autoref{fig:slope1_fig2}: see \autoref{fig:slope2_fig34} and \autoref{fig:slope2_fig56}. Note that some regions, such as Region 5 in \autoref{fig:slope2_fig34}, are mapped to themselves, however, noting the change of orientation in \autoref{fig:class_pairing_slope2}, one can see that most points in such regions are actually mapped to incongruent points in the same region, except for one diagonal segment in every such region which is pointwise fixed.

Using \autoref{fig:slope2_fig34} and \autoref{fig:slope2_fig56}, as well as the change of orientation under the pairing map, we can now depict those points in $T_1^*$ which have four $(+2)$-equivalent copies in $\cG^+$ overall. These points and their regionwise equivalent images are shown in \autoref{fig:slope2_fig78}.

To obtain the $(+2)$-equivalence classes consisting of four points of $T_1$, we need to depict the overlap between the shaded regions in the two parts of \autoref{fig:slope2_fig78}. This is shown in \autoref{fig:slope2_fig9}, along with a full partitioning of all such areas into families of $(+2)$-equivalent regions, as obtained using the previous figures.

By following an analogous procedure for slope $+\frac12$ lines, we can similarly obtain a partitioning with respect to $\left(+\frac12\right)$-equivalence, see \autoref{fig:slopehalf_fig9}. Using symmetry in the line $x=\frac32p$, the partitionings with respect to $m$-equivalence for $m=-2$ and $m=-\frac12$ may be obtained by reflecting the partitionings for $m=+2$ and $m=+\frac12$ in this line.

To obtain our set $T$, we choose a set of points to be removed from $T_1$ so that for all four partitionings, at least one region is fully erased from each family of $m$-equivalent regions. A way to do this is to remove all points indicated in \autoref{fig:slope2_removed} and \autoref{fig:slopehalf_removed}, taking care of $m=\pm 2$ and $m=\pm \frac12$, respectively. In several places, we chose to remove overlapping areas in the two figures to reduce the number of points overall removed. The remaining set $T$ was depicted in \autoref{sec:k=3}, in \autoref{fig:finalset}.

The set $T_1$ contains $\frac{13}{2}p^2-O(p)$ normal points (see \autoref{fig:slope1_fig1}), whereas in \autoref{fig:slope2_removed} and \autoref{fig:slopehalf_removed}, the number of normal points in the removed regions is $\frac{11}{48}p^2+O(p)$ and $\frac{23}{60}p^2+O(p)$, respectively. However, the two regions intersect in $\frac{29}{896}p^2-O(p)$ normal points, so all in all, $T$ contains

$$\left(\frac{13}{2}-\frac{11}{48}-\frac{23}{60}+\frac{29}{896}\right)p^2-O(p)=\frac{26521}{4480}p^2-O(p)$$
normal points. As our no-$4$-in-line set $T'$ is constructed as $T':=T\cap H(c,p)$, there exists a value $c\in \F_p^{*}$ for which $$|T'|\ge \frac{26521}{4480}p-O(1)=\frac{26521}{13440}n-O(1)\ge 1.97328n-O(1).$$ This concludes the proof of \autoref{k=3}, as for sufficiently large $n$ we can pick an odd prime $p$ such that $3p$ asymptotically equals (but does not exceed) $n$.

\vfill

\begin{figure}[h!!]
	\centering
	\begin{minipage}{0.47\textwidth}
		\centering
		\definecolor{shadeone}{RGB}{200,70,70}     % red
\definecolor{shadefour}{RGB}{55,115,205}    % blue
\definecolor{shadethree}{RGB}{60,150,85}   % green
\definecolor{shadetwo}{RGB}{220,145,45}   % orange
\definecolor{shadefive}{RGB}{140,95,200}   % purple
\definecolor{shadesix}{RGB}{45,165,155}    % teal
\definecolor{shadeeight}{RGB}{185,165,35}  % olive-yellow
\definecolor{shadeseven}{RGB}{200,85,145}  % pink-magenta

\begin{tikzpicture}[line cap=round,line join=round,>=triangle 45,x=0.19cm,y=0.19cm,  scale=0.85, transform shape]%,trim axis left, trim axis right

%\useasboundingbox (0,0) rectangle (width,height);
\clip(-4.,-10.0) rectangle (41.,34.5);

% label style
\def\NumStyle{\bfseries\fontsize{9}{9}\selectfont}

% helpers for labelled half-blocks
\def\HalfLeft#1#2#3#4{%
  \fill[pattern=horizontalodd, pattern color=#3] (#1,#2) rectangle ({#1+3.25},{#2+6.5});
  \node[font=\NumStyle] at ({#1+1.625},{#2+3.25}) {#4};
}
\def\HalfRight#1#2#3#4{%
  \fill[pattern=horizontalodd, pattern color=#3] ({#1+3.25},#2) rectangle ({#1+6.5},{#2+6.5});
  \node[font=\NumStyle] at ({#1+4.875},{#2+3.25}) {#4};
}

% shading + labels

% Colour 1: a3, a5, e1, e3 left half
\HalfLeft{13}{26}{shadeone}{1}    % a3 L
\HalfLeft{26}{26}{shadeone}{9}    % a5 L
\HalfLeft{0}{0}{shadeone}{1}      % e1 L
\HalfLeft{13}{0}{shadeone}{9}     % e3 L

% Colour 2: a3, a5, e1, e3 right half
\HalfRight{13}{26}{shadetwo}{2}   % a3 R
\HalfRight{26}{26}{shadetwo}{10}   % a5 R
\HalfRight{0}{0}{shadetwo}{2}     % e1 R
\HalfRight{13}{0}{shadetwo}{10}    % e3 R

% Colour 3: a4, a6, e2, e4 left half
\HalfLeft{19.5}{26}{shadethree}{3}  % a4 L
\HalfLeft{32.5}{26}{shadethree}{11}  % a6 L
\HalfLeft{6.5}{0}{shadethree}{3}    % e2 L
\HalfLeft{19.5}{0}{shadethree}{11}   % e4 L

% Colour 4: a4, a6, e2, e4 right half
\HalfRight{19.5}{26}{shadefour}{4}  % a4 R
\HalfRight{32.5}{26}{shadefour}{12}  % a6 R
\HalfRight{6.5}{0}{shadefour}{4}    % e2 R
\HalfRight{19.5}{0}{shadefour}{12}   % e4 R

% Colour 5: b3, b5, f1, f3 left half
\HalfLeft{13}{19.5}{shadefive}{5}   % b3 L
\HalfLeft{26}{19.5}{shadefive}{13}   % b5 L
\HalfLeft{0}{-6.5}{shadefive}{5}    % f1 L
\HalfLeft{13}{-6.5}{shadefive}{13}   % f3 L

% Colour 6: b3, b5, f1, f3 right half
\HalfRight{13}{19.5}{shadesix}{6}   % b3 R
\HalfRight{26}{19.5}{shadesix}{14}   % b5 R
\HalfRight{0}{-6.5}{shadesix}{6}    % f1 R
\HalfRight{13}{-6.5}{shadesix}{14}   % f3 R

% Colour 7: b4, b6, f2, f4 left half
\HalfLeft{19.5}{19.5}{shadeseven}{7}  % b4 L
\HalfLeft{32.5}{19.5}{shadeseven}{15}  % b6 L
\HalfLeft{6.5}{-6.5}{shadeseven}{7}   % f2 L
\HalfLeft{19.5}{-6.5}{shadeseven}{15}  % f4 L

% Colour 8: b4, b6, f2, f4 right half
\HalfRight{19.5}{19.5}{shadeeight}{8} % b4 R
\HalfRight{32.5}{19.5}{shadeeight}{16} % b6 R
\HalfRight{6.5}{-6.5}{shadeeight}{8}  % f2 R
\HalfRight{19.5}{-6.5}{shadeeight}{16} % f4 R

% main grid
\draw [line width=2pt] (0.,32.5)-- (0.,-6.5);
\draw [line width=2pt] (13.,-6.5)-- (13.,32.5);
\draw [line width=2pt] (26.,32.5)-- (26.,-6.5);
\draw [line width=2pt] (0.,0.)-- (39.,0.);
\draw [line width=2pt] (0.,26.)-- (39.,26.);
\draw [line width=2pt] (39.,13.)-- (0.,13.);
\draw [line width=2pt] (39.,32.5)-- (39.,-6.5);
\draw [line width=1pt] (6.5,32.5)-- (6.5,-6.5);
\draw [line width=1pt] (19.5,-6.5)-- (19.5,32.5);
\draw [line width=1pt] (32.5,32.5)-- (32.5,-6.5);
\draw [line width=1pt] (0.,32.5)-- (39.,32.5);
\draw [line width=1pt] (39.,19.5)-- (0.,19.5);
\draw [line width=1pt] (0.,6.5)-- (39.,6.5);
\draw [line width=1pt] (39.,-6.5)-- (0.,-6.5);

% required vertical half-block splits
\draw [line width=1pt] (3.25,-6.5)--(3.25,0);     % f1
\draw [line width=1pt] (9.75,-6.5)--(9.75,0);     % f2
\draw [line width=1pt] (16.25,-6.5)--(16.25,0);   % f3
\draw [line width=1pt] (22.75,-6.5)--(22.75,0);   % f4

\draw [line width=1pt] (3.25,0)--(3.25,6.5);      % e1
\draw [line width=1pt] (9.75,0)--(9.75,6.5);      % e2
\draw [line width=1pt] (16.25,0)--(16.25,6.5);    % e3
\draw [line width=1pt] (22.75,0)--(22.75,6.5);    % e4

\draw [line width=1pt] (16.25,19.5)--(16.25,26);  % b3
\draw [line width=1pt] (22.75,19.5)--(22.75,26);  % b4
\draw [line width=1pt] (29.25,19.5)--(29.25,26);  % b5
\draw [line width=1pt] (35.75,19.5)--(35.75,26);  % b6

\draw [line width=1pt] (16.25,26)--(16.25,32.5);  % a3
\draw [line width=1pt] (22.75,26)--(22.75,32.5);  % a4
\draw [line width=1pt] (29.25,26)--(29.25,32.5);  % a5
\draw [line width=1pt] (35.75,26)--(35.75,32.5);  % a6

% axis labels
\draw (0,-8.5) node[anchor=center] {$0$};
\draw (6.5,-8.5) node[anchor=center] {$\frac12p$};
\draw (13,-8.5) node[anchor=center] {$p$};
\draw (19.5,-8.5) node[anchor=center] {$\frac32p$};
\draw (26,-8.5) node[anchor=center] {$2p$};
\draw (32.5,-8.5) node[anchor=center] {$\frac52p$};
\draw (39,-8.5) node[anchor=center] {$3p$};
\draw (-2.5,-6.5) node[anchor=center] {$-\frac12p$};
\draw (-2.5,0) node[anchor=center] {$0$};
\draw (-2.5,6.5) node[anchor=center] {$\frac12p$};
\draw (-2.5,13) node[anchor=center] {$p$};
\draw (-2.5,19.5) node[anchor=center] {$\frac32p$};
\draw (-2.5,26) node[anchor=center] {$2p$};
\draw (-2.5,32.5) node[anchor=center] {$\frac52p$};
\end{tikzpicture}
		%\captionof{figure}{Regions covering all pink points of odd $y$-coordinate mod $p$. Regions with the same number are equivalent, and those with the same colour are congruent.}
		%\label{fig:slope2_fig3}
	\end{minipage}%
    %\hfill
	\begin{minipage}{0.06\textwidth}
    \centering
    \raisebox{0.2cm}{\scalebox{2.5}{$\rightarrow$}}
    \end{minipage}%
	\begin{minipage}{0.47\textwidth}
		\centering
		\definecolor{shadeone}{RGB}{200,70,70}     % red
\definecolor{shadefour}{RGB}{55,115,205}    % blue
\definecolor{shadethree}{RGB}{60,150,85}   % green
\definecolor{shadetwo}{RGB}{220,145,45}   % orange
\definecolor{shadefive}{RGB}{140,95,200}   % purple
\definecolor{shadesix}{RGB}{45,165,155}    % teal
\definecolor{shadeeight}{RGB}{185,165,35}  % olive-yellow
\definecolor{shadeseven}{RGB}{200,85,145}  % pink-magenta

\begin{tikzpicture}[line cap=round,line join=round,>=triangle 45,x=0.19cm,y=0.19cm, scale=0.85, transform shape]
\clip(-4.,-10.0) rectangle (41.,34.5);

% label style
\def\NumStyle{\bfseries\fontsize{9}{9}\selectfont}

% helpers
\def\HalfLeftOdd#1#2#3#4{%
  \fill[pattern=horizontalodd, pattern color=#3] (#1,#2) rectangle ({#1+3.25},{#2+6.5});
  \node[font=\NumStyle] at ({#1+1.625},{#2+3.25}) {#4};
}
\def\HalfRightOdd#1#2#3#4{%
  \fill[pattern=horizontalodd, pattern color=#3] ({#1+3.25},#2) rectangle ({#1+6.5},{#2+6.5});
  \node[font=\NumStyle] at ({#1+4.875},{#2+3.25}) {#4};
}
\def\HalfLeftEven#1#2#3#4{%
  \fill[pattern=horizontaleven, pattern color=#3] (#1,#2) rectangle ({#1+3.25},{#2+6.5});
  \node[font=\NumStyle] at ({#1+1.625},{#2+3.25}) {#4};
}
\def\HalfRightEven#1#2#3#4{%
  \fill[pattern=horizontaleven, pattern color=#3] ({#1+3.25},#2) rectangle ({#1+6.5},{#2+6.5});
  \node[font=\NumStyle] at ({#1+4.875},{#2+3.25}) {#4};
}

% shading + labels

% Colour 1: d1 right half odd, d3 right half odd
\HalfRightOdd{0}{6.5}{shadeone}{1}      % d1 R
\HalfRightOdd{13}{6.5}{shadeone}{9}     % d3 R

% Colour 2: a3, a5, e1, e3 right half odd
\HalfRightOdd{13}{26}{shadetwo}{2}      % a3 R
\HalfRightOdd{26}{26}{shadetwo}{10}      % a5 R
\HalfRightOdd{0}{0}{shadetwo}{2}        % e1 R
\HalfRightOdd{13}{0}{shadetwo}{10}       % e3 R

% Colour 3: b3, b5, f1, f3 right half even
\HalfRightEven{13}{19.5}{shadethree}{3} % b3 R
\HalfRightEven{26}{19.5}{shadethree}{11} % b5 R
\HalfRightEven{0}{-6.5}{shadethree}{3}  % f1 R
\HalfRightEven{13}{-6.5}{shadethree}{11} % f3 R

% Colour 4: c3, c5 right half even
\HalfRightEven{13}{13}{shadefour}{4}    % c3 R
\HalfRightEven{26}{13}{shadefour}{12}    % c5 R

% Colour 5: b3, b5, f1, f3 left half odd
\HalfLeftOdd{13}{19.5}{shadefive}{5}    % b3 L
\HalfLeftOdd{26}{19.5}{shadefive}{13}    % b5 L
\HalfLeftOdd{0}{-6.5}{shadefive}{5}     % f1 L
\HalfLeftOdd{13}{-6.5}{shadefive}{13}    % f3 L

% Colour 6: c3, c5 left half odd
\HalfLeftOdd{13}{13}{shadesix}{6}       % c3 L
\HalfLeftOdd{26}{13}{shadesix}{14}       % c5 L

% Colour 7: d3, d5 left half even
\HalfLeftEven{13}{6.5}{shadeseven}{7}   % d3 L
\HalfLeftEven{26}{6.5}{shadeseven}{15}   % d5 L

% Colour 8: a5, e3, e5 left half even
\HalfLeftEven{26}{26}{shadeeight}{16}    % a5 L
\HalfLeftEven{13}{0}{shadeeight}{8}     % e3 L
\HalfLeftEven{26}{0}{shadeeight}{16}     % e5 L

% main grid
\draw [line width=2pt] (0.,32.5)-- (0.,-6.5);
\draw [line width=2pt] (13.,-6.5)-- (13.,32.5);
\draw [line width=2pt] (26.,32.5)-- (26.,-6.5);
\draw [line width=2pt] (0.,0.)-- (39.,0.);
\draw [line width=2pt] (0.,26.)-- (39.,26.);
\draw [line width=2pt] (39.,13.)-- (0.,13.);
\draw [line width=2pt] (39.,32.5)-- (39.,-6.5);
\draw [line width=1pt] (6.5,32.5)-- (6.5,-6.5);
\draw [line width=1pt] (19.5,-6.5)-- (19.5,32.5);
\draw [line width=1pt] (32.5,32.5)-- (32.5,-6.5);
\draw [line width=1pt] (0.,32.5)-- (39.,32.5);
\draw [line width=1pt] (39.,19.5)-- (0.,19.5);
\draw [line width=1pt] (0.,6.5)-- (39.,6.5);
\draw [line width=1pt] (39.,-6.5)-- (0.,-6.5);

% required vertical half-block splits
\draw [line width=1pt] (3.25,-6.5)--(3.25,0);      % f1
\draw [line width=1pt] (16.25,-6.5)--(16.25,0);    % f3

\draw [line width=1pt] (3.25,0)--(3.25,6.5);       % e1
\draw [line width=1pt] (16.25,0)--(16.25,6.5);     % e3
\draw [line width=1pt] (29.25,0)--(29.25,6.5);     % e5

\draw [line width=1pt] (16.25,6.5)--(16.25,13);    % d3
\draw [line width=1pt] (29.25,6.5)--(29.25,13);    % d5
\draw [line width=1pt] (3.25,6.5)--(3.25,13);      % d1

\draw [line width=1pt] (16.25,13)--(16.25,19.5);   % c3
\draw [line width=1pt] (29.25,13)--(29.25,19.5);   % c5

\draw [line width=1pt] (16.25,19.5)--(16.25,26);   % b3
\draw [line width=1pt] (29.25,19.5)--(29.25,26);   % b5

\draw [line width=1pt] (16.25,26)--(16.25,32.5);   % a3
\draw [line width=1pt] (29.25,26)--(29.25,32.5);   % a5

% axis labels
\draw (0,-8.5) node[anchor=center] {$0$};
\draw (6.5,-8.5) node[anchor=center] {$\frac12p$};
\draw (13,-8.5) node[anchor=center] {$p$};
\draw (19.5,-8.5) node[anchor=center] {$\frac32p$};
\draw (26,-8.5) node[anchor=center] {$2p$};
\draw (32.5,-8.5) node[anchor=center] {$\frac52p$};
\draw (39,-8.5) node[anchor=center] {$3p$};
\draw (-2.5,-6.5) node[anchor=center] {$-\frac12p$};
\draw (-2.5,0) node[anchor=center] {$0$};
\draw (-2.5,6.5) node[anchor=center] {$\frac12p$};
\draw (-2.5,13) node[anchor=center] {$p$};
\draw (-2.5,19.5) node[anchor=center] {$\frac32p$};
\draw (-2.5,26) node[anchor=center] {$2p$};
\draw (-2.5,32.5) node[anchor=center] {$\frac52p$};
\end{tikzpicture}
		%\captionof{figure}{Regions equivalent to those in the left-hand figure. For each number, the regions with that number in the two figures form a full family of equivalent regions.}
		%\label{fig:slope2_fig4}
	\end{minipage}
 \captionsetup{skip=3pt}
     \caption{The left figure shows regions covering all points of $T_1^*$ having odd $y$-coordinate mod $p$. The right figure shows the regions $(+2)$-equivalent to those with the same number on the left, belonging to the image of the congruence class under our pairing map.}
     \vspace{1.2cm}
     \label{fig:slope2_fig34}
\end{figure}
\begin{figure}[h!!]
	\centering
	\begin{minipage}{0.47\textwidth}
		\centering
		\definecolor{shadeone}{RGB}{200,70,70}     % red
\definecolor{shadefour}{RGB}{55,115,205}    % blue
\definecolor{shadethree}{RGB}{60,150,85}   % green
\definecolor{shadetwo}{RGB}{220,145,45}   % orange
\definecolor{shadefive}{RGB}{140,95,200}   % purple
\definecolor{shadesix}{RGB}{45,165,155}    % teal
\definecolor{shadeeight}{RGB}{185,165,35}  % olive-yellow
\definecolor{shadeseven}{RGB}{200,85,145}  % pink-magenta

\begin{tikzpicture}[line cap=round,line join=round,>=triangle 45,x=0.19cm,y=0.19cm, scale=0.85, transform shape]
\clip(-4.,-10.0) rectangle (41.,34.5);

% label style
\def\NumStyle{\bfseries\fontsize{9}{9}\selectfont}

% helpers for labelled half-blocks
\def\HalfLeft#1#2#3#4{%
  \fill[pattern=horizontaleven, pattern color=#3] (#1,#2) rectangle ({#1+3.25},{#2+6.5});
  \node[font=\NumStyle] at ({#1+1.625},{#2+3.25}) {#4};
}
\def\HalfRight#1#2#3#4{%
  \fill[pattern=horizontaleven, pattern color=#3] ({#1+3.25},#2) rectangle ({#1+6.5},{#2+6.5});
  \node[font=\NumStyle] at ({#1+4.875},{#2+3.25}) {#4};
}

% shading + labels

% Colour 1: a3, a5, e1, e3 left half
\HalfLeft{13}{26}{shadeone}{1}    % a3 L
\HalfLeft{26}{26}{shadeone}{9}    % a5 L
\HalfLeft{0}{0}{shadeone}{1}      % e1 L
\HalfLeft{13}{0}{shadeone}{9}     % e3 L

% Colour 2: a3, a5, e1, e3 right half
\HalfRight{13}{26}{shadetwo}{2}   % a3 R
\HalfRight{26}{26}{shadetwo}{10}   % a5 R
\HalfRight{0}{0}{shadetwo}{2}     % e1 R
\HalfRight{13}{0}{shadetwo}{10}    % e3 R

% Colour 3: a4, a6, e2, e4 left half
\HalfLeft{19.5}{26}{shadethree}{3}  % a4 L
\HalfLeft{32.5}{26}{shadethree}{11}  % a6 L
\HalfLeft{6.5}{0}{shadethree}{3}    % e2 L
\HalfLeft{19.5}{0}{shadethree}{11}   % e4 L

% Colour 4: a4, a6, e2, e4 right half
\HalfRight{19.5}{26}{shadefour}{4}  % a4 R
\HalfRight{32.5}{26}{shadefour}{12}  % a6 R
\HalfRight{6.5}{0}{shadefour}{4}    % e2 R
\HalfRight{19.5}{0}{shadefour}{12}   % e4 R

% Colour 5: b3, b5, f1, f3 left half
\HalfLeft{13}{19.5}{shadefive}{5}   % b3 L
\HalfLeft{26}{19.5}{shadefive}{13}   % b5 L
\HalfLeft{0}{-6.5}{shadefive}{5}    % f1 L
\HalfLeft{13}{-6.5}{shadefive}{13}   % f3 L

% Colour 6: b3, b5, f1, f3 right half
\HalfRight{13}{19.5}{shadesix}{6}   % b3 R
\HalfRight{26}{19.5}{shadesix}{14}   % b5 R
\HalfRight{0}{-6.5}{shadesix}{6}    % f1 R
\HalfRight{13}{-6.5}{shadesix}{14}   % f3 R

% Colour 7: b4, b6, f2, f4 left half
\HalfLeft{19.5}{19.5}{shadeseven}{7}  % b4 L
\HalfLeft{32.5}{19.5}{shadeseven}{15}  % b6 L
\HalfLeft{6.5}{-6.5}{shadeseven}{7}   % f2 L
\HalfLeft{19.5}{-6.5}{shadeseven}{15}  % f4 L

% Colour 8: b4, b6, f2, f4 right half
\HalfRight{19.5}{19.5}{shadeeight}{8} % b4 R
\HalfRight{32.5}{19.5}{shadeeight}{16} % b6 R
\HalfRight{6.5}{-6.5}{shadeeight}{8}  % f2 R
\HalfRight{19.5}{-6.5}{shadeeight}{16} % f4 R

% main grid
\draw [line width=2pt] (0.,32.5)-- (0.,-6.5);
\draw [line width=2pt] (13.,-6.5)-- (13.,32.5);
\draw [line width=2pt] (26.,32.5)-- (26.,-6.5);
\draw [line width=2pt] (0.,0.)-- (39.,0.);
\draw [line width=2pt] (0.,26.)-- (39.,26.);
\draw [line width=2pt] (39.,13.)-- (0.,13.);
\draw [line width=2pt] (39.,32.5)-- (39.,-6.5);
\draw [line width=1pt] (6.5,32.5)-- (6.5,-6.5);
\draw [line width=1pt] (19.5,-6.5)-- (19.5,32.5);
\draw [line width=1pt] (32.5,32.5)-- (32.5,-6.5);
\draw [line width=1pt] (0.,32.5)-- (39.,32.5);
\draw [line width=1pt] (39.,19.5)-- (0.,19.5);
\draw [line width=1pt] (0.,6.5)-- (39.,6.5);
\draw [line width=1pt] (39.,-6.5)-- (0.,-6.5);

% required vertical half-block splits
\draw [line width=1pt] (3.25,-6.5)--(3.25,0);     % f1
\draw [line width=1pt] (9.75,-6.5)--(9.75,0);     % f2
\draw [line width=1pt] (16.25,-6.5)--(16.25,0);   % f3
\draw [line width=1pt] (22.75,-6.5)--(22.75,0);   % f4

\draw [line width=1pt] (3.25,0)--(3.25,6.5);      % e1
\draw [line width=1pt] (9.75,0)--(9.75,6.5);      % e2
\draw [line width=1pt] (16.25,0)--(16.25,6.5);    % e3
\draw [line width=1pt] (22.75,0)--(22.75,6.5);    % e4

\draw [line width=1pt] (16.25,19.5)--(16.25,26);  % b3
\draw [line width=1pt] (22.75,19.5)--(22.75,26);  % b4
\draw [line width=1pt] (29.25,19.5)--(29.25,26);  % b5
\draw [line width=1pt] (35.75,19.5)--(35.75,26);  % b6

\draw [line width=1pt] (16.25,26)--(16.25,32.5);  % a3
\draw [line width=1pt] (22.75,26)--(22.75,32.5);  % a4
\draw [line width=1pt] (29.25,26)--(29.25,32.5);  % a5
\draw [line width=1pt] (35.75,26)--(35.75,32.5);  % a6

% axis labels
\draw (0,-8.5) node[anchor=center] {$0$};
\draw (6.5,-8.5) node[anchor=center] {$\frac12p$};
\draw (13,-8.5) node[anchor=center] {$p$};
\draw (19.5,-8.5) node[anchor=center] {$\frac32p$};
\draw (26,-8.5) node[anchor=center] {$2p$};
\draw (32.5,-8.5) node[anchor=center] {$\frac52p$};
\draw (39,-8.5) node[anchor=center] {$3p$};
\draw (-2.5,-6.5) node[anchor=center] {$-\frac12p$};
\draw (-2.5,0) node[anchor=center] {$0$};
\draw (-2.5,6.5) node[anchor=center] {$\frac12p$};
\draw (-2.5,13) node[anchor=center] {$p$};
\draw (-2.5,19.5) node[anchor=center] {$\frac32p$};
\draw (-2.5,26) node[anchor=center] {$2p$};
\draw (-2.5,32.5) node[anchor=center] {$\frac52p$};
\end{tikzpicture}
		%\captionof{figure}{Regions covering all pink points of even $y$-coordinate mod $p$. Regions with the same label are equivalent, and those with the same colour are congruent.}
        %\label{fig:slope2_fig5}
	\end{minipage}%
    %\hfill
	\begin{minipage}{0.06\textwidth}
    \centering
    \raisebox{0.2cm}{\scalebox{2.5}{$\rightarrow$}}
    \end{minipage}%
	\begin{minipage}{0.47\textwidth}
		\centering
		\definecolor{shadeone}{RGB}{200,70,70}     % red
\definecolor{shadefour}{RGB}{55,115,205}    % blue
\definecolor{shadethree}{RGB}{60,150,85}   % green
\definecolor{shadetwo}{RGB}{220,145,45}   % orange
\definecolor{shadefive}{RGB}{140,95,200}   % purple
\definecolor{shadesix}{RGB}{45,165,155}    % teal
\definecolor{shadeeight}{RGB}{185,165,35}  % olive-yellow
\definecolor{shadeseven}{RGB}{200,85,145}  % pink-magenta

\begin{tikzpicture}[line cap=round,line join=round,>=triangle 45,x=0.19cm,y=0.19cm, scale=0.85, transform shape] 
\clip(-4.,-10.0) rectangle (41.,34.5);

% label style
\def\NumStyle{\bfseries\fontsize{9}{9}\selectfont}

% helpers
\def\HalfLeftOdd#1#2#3#4{%
  \fill[pattern=horizontalodd, pattern color=#3] (#1,#2) rectangle ({#1+3.25},{#2+6.5});
  \node[font=\NumStyle] at ({#1+1.625},{#2+3.25}) {#4};
}
\def\HalfRightOdd#1#2#3#4{%
  \fill[pattern=horizontalodd, pattern color=#3] ({#1+3.25},#2) rectangle ({#1+6.5},{#2+6.5});
  \node[font=\NumStyle] at ({#1+4.875},{#2+3.25}) {#4};
}
\def\HalfLeftEven#1#2#3#4{%
  \fill[pattern=horizontaleven, pattern color=#3] (#1,#2) rectangle ({#1+3.25},{#2+6.5});
  \node[font=\NumStyle] at ({#1+1.625},{#2+3.25}) {#4};
}
\def\HalfRightEven#1#2#3#4{%
  \fill[pattern=horizontaleven, pattern color=#3] ({#1+3.25},#2) rectangle ({#1+6.5},{#2+6.5});
  \node[font=\NumStyle] at ({#1+4.875},{#2+3.25}) {#4};
}

% shading + labels

% Colour 1: b2, b4, f2 right half odd
\HalfRightOdd{6.5}{19.5}{shadeone}{1}     % b2
\HalfRightOdd{19.5}{19.5}{shadeone}{9}    % b4
\HalfRightOdd{6.5}{-6.5}{shadeone}{9}     % f2

% Colour 2: c2, c4 right half odd
\HalfRightOdd{6.5}{13}{shadetwo}{2}       % c2
\HalfRightOdd{19.5}{13}{shadetwo}{10}      % c4

% Colour 3: d2, d4 right half even
\HalfRightEven{6.5}{6.5}{shadethree}{3}   % d2
\HalfRightEven{19.5}{6.5}{shadethree}{11}  % d4

% Colour 4: a4, a6, e2, e4 right half even
\HalfRightEven{19.5}{26}{shadefour}{4}    % a4
\HalfRightEven{32.5}{26}{shadefour}{12}    % a6
\HalfRightEven{6.5}{0}{shadefour}{4}      % e2
\HalfRightEven{19.5}{0}{shadefour}{12}     % e4

% Colour 5: d2, d4 left half odd
\HalfLeftOdd{6.5}{6.5}{shadefive}{5}      % d2
\HalfLeftOdd{19.5}{6.5}{shadefive}{13}     % d4

% Colour 6: a4, a6, e2, e4 left half odd
\HalfLeftOdd{19.5}{26}{shadesix}{6}       % a4
\HalfLeftOdd{32.5}{26}{shadesix}{14}       % a6
\HalfLeftOdd{6.5}{0}{shadesix}{6}         % e2
\HalfLeftOdd{19.5}{0}{shadesix}{14}        % e4

% Colour 7: b4, b6, f2, f4 left half even
\HalfLeftEven{19.5}{19.5}{shadeseven}{7}  % b4
\HalfLeftEven{32.5}{19.5}{shadeseven}{15}  % b6
\HalfLeftEven{6.5}{-6.5}{shadeseven}{7}   % f2
\HalfLeftEven{19.5}{-6.5}{shadeseven}{15}  % f4

% Colour 8: c4, c6 left half even
\HalfLeftEven{19.5}{13}{shadeeight}{8}    % c4
\HalfLeftEven{32.5}{13}{shadeeight}{16}    % c6

% main grid
\draw [line width=2pt] (0.,32.5)-- (0.,-6.5);
\draw [line width=2pt] (13.,-6.5)-- (13.,32.5);
\draw [line width=2pt] (26.,32.5)-- (26.,-6.5);
\draw [line width=2pt] (0.,0.)-- (39.,0.);
\draw [line width=2pt] (0.,26.)-- (39.,26.);
\draw [line width=2pt] (39.,13.)-- (0.,13.);
\draw [line width=2pt] (39.,32.5)-- (39.,-6.5);
\draw [line width=1pt] (6.5,32.5)-- (6.5,-6.5);
\draw [line width=1pt] (19.5,-6.5)-- (19.5,32.5);
\draw [line width=1pt] (32.5,32.5)-- (32.5,-6.5);
\draw [line width=1pt] (0.,32.5)-- (39.,32.5);
\draw [line width=1pt] (39.,19.5)-- (0.,19.5);
\draw [line width=1pt] (0.,6.5)-- (39.,6.5);
\draw [line width=1pt] (39.,-6.5)-- (0.,-6.5);

% required vertical half-block splits
\draw [line width=1pt] (9.75,-6.5)--(9.75,0);      % f2
\draw [line width=1pt] (22.75,-6.5)--(22.75,0);    % f4

\draw [line width=1pt] (9.75,0)--(9.75,6.5);       % e2
\draw [line width=1pt] (22.75,0)--(22.75,6.5);     % e4

\draw [line width=1pt] (9.75,6.5)--(9.75,13);      % d2
\draw [line width=1pt] (22.75,6.5)--(22.75,13);    % d4

\draw [line width=1pt] (9.75,13)--(9.75,19.5);     % c2
\draw [line width=1pt] (22.75,13)--(22.75,19.5);   % c4
\draw [line width=1pt] (35.75,13)--(35.75,19.5);   % c6

\draw [line width=1pt] (9.75,19.5)--(9.75,26);     % b2
\draw [line width=1pt] (22.75,19.5)--(22.75,26);   % b4
\draw [line width=1pt] (35.75,19.5)--(35.75,26);   % b6

\draw [line width=1pt] (22.75,26)--(22.75,32.5);   % a4
\draw [line width=1pt] (35.75,26)--(35.75,32.5);   % a6

% axis labels
\draw (0,-8.5) node[anchor=center] {$0$};
\draw (6.5,-8.5) node[anchor=center] {$\frac12p$};
\draw (13,-8.5) node[anchor=center] {$p$};
\draw (19.5,-8.5) node[anchor=center] {$\frac32p$};
\draw (26,-8.5) node[anchor=center] {$2p$};
\draw (32.5,-8.5) node[anchor=center] {$\frac52p$};
\draw (39,-8.5) node[anchor=center] {$3p$};
\draw (-2.5,-6.5) node[anchor=center] {$-\frac12p$};
\draw (-2.5,0) node[anchor=center] {$0$};
\draw (-2.5,6.5) node[anchor=center] {$\frac12p$};
\draw (-2.5,13) node[anchor=center] {$p$};
\draw (-2.5,19.5) node[anchor=center] {$\frac32p$};
\draw (-2.5,26) node[anchor=center] {$2p$};
\draw (-2.5,32.5) node[anchor=center] {$\frac52p$};
\end{tikzpicture}
		%\captionof{figure}{Regions equivalent to those in the left-hand figure. For each number, the regions with that number in the two figures form a full family of equivalent regions.}
		%\label{fig:slope2_fig6}
	\end{minipage}
 \captionsetup{skip=3pt}
     \caption{The left figure shows regions covering all points of $T_1^*$ having even $y$-coordinate mod $p$. The right figure shows the regions $(+2)$-equivalent to those with the same number on the left, belonging to the image of the congruence class under our pairing map.}
     \label{fig:slope2_fig56}
\end{figure}

\begin{figure}[h!!]
	\centering
	\begin{minipage}{0.47\textwidth}
		\centering
		\input{figures/slope2_fig7}
		%\captionof{figure}{Next TODO figure.}
		%\label{fig:slope2_fig7}
	\end{minipage}%
    %\hfill
	\begin{minipage}{0.06\textwidth}
    \centering
    \raisebox{0.2cm}{\scalebox{2.5}{$\rightarrow$}}
    \end{minipage}%
	\begin{minipage}{0.47\textwidth}
		\centering
		\definecolor{shadeone}{RGB}{200,70,70}     % red
\definecolor{shadefour}{RGB}{55,115,205}    % blue
\definecolor{shadethree}{RGB}{60,150,85}   % green
\definecolor{shadetwo}{RGB}{220,145,45}   % orange
\definecolor{shadefive}{RGB}{140,95,200}   % purple
\definecolor{shadesix}{RGB}{45,165,155}    % teal
\definecolor{shadeeight}{RGB}{185,165,35}  % olive-yellow
\definecolor{shadeseven}{RGB}{200,85,145}  % pink-magenta

\begin{tikzpicture}[line cap=round,line join=round,>=triangle 45,x=0.19cm,y=0.19cm, scale=0.85, transform shape]
\clip(-4.,-10.0) rectangle (41.,34.5);

% label style
\def\NumStyle{\bfseries\fontsize{9}{9}\selectfont}

% shading

% Colour 1 (odd shading)
% b4 right half: LH union (TL of RH)
\fill[pattern=horizontalodd, pattern color=shadeone]
  (22.75,26) -- (26,26) -- (24.375,19.5) -- (22.75,19.5) -- cycle;
\node[font=\NumStyle] at (24.0139,22.75) {9};

% f2 right half: LH union (TL of RH)
\fill[pattern=horizontalodd, pattern color=shadeone]
  (9.75,0) -- (13,0) -- (11.375,-6.5) -- (9.75,-6.5) -- cycle;
\node[font=\NumStyle] at (11.0139,-3.25) {9};

% Colour 2 (odd shading)
% a3 right half: fully shaded
\fill[pattern=horizontalodd, pattern color=shadetwo]
  (16.25,26) rectangle (19.5,32.5);
\node[font=\NumStyle] at (17.875,29.25) {2};

% a5 right half: TL of LH
\fill[pattern=horizontalodd, pattern color=shadetwo]
  (29.25,32.5) -- (29.25,26) -- (30.875,32.5) -- cycle;
\node[font=\NumStyle] at (29.9,33.35) {10};

% e1 right half: fully shaded
\fill[pattern=horizontalodd, pattern color=shadetwo]
  (3.25,0) rectangle (6.5,6.5);
\node[font=\NumStyle] at (4.875,3.25) {2};

% e3 right half: TL of LH
\fill[pattern=horizontalodd, pattern color=shadetwo]
  (16.25,6.5) -- (16.25,0) -- (17.875,6.5) -- cycle;
\node[font=\NumStyle] at (16.9,7.35) {10};

% Colour 3 (even shading)
% b3 right half: RH union (TR of LH)
\fill[pattern=horizontaleven, pattern color=shadethree]
  (16.25,26) -- (19.5,26) -- (19.5,19.5) -- (17.875,19.5) -- cycle;
\node[font=\NumStyle] at (18.2361,22.75) {3};

% b5 right half: BL of LH
\fill[pattern=horizontaleven, pattern color=shadethree]
  (29.25,26) -- (29.25,19.5) -- (30.875,19.5) -- cycle;
\node[font=\NumStyle] at (30.95,24) {11};

% f1 right half: RH union (TR of LH)
\fill[pattern=horizontaleven, pattern color=shadethree]
  (3.25,0) -- (6.5,0) -- (6.5,-6.5) -- (4.875,-6.5) -- cycle;
\node[font=\NumStyle] at (5.2361,-3.25) {3};

% f3 right half: BL of LH
\fill[pattern=horizontaleven, pattern color=shadethree]
  (16.25,0) -- (16.25,-6.5) -- (17.875,-6.5) -- cycle;
\node[font=\NumStyle] at (17.95,-2) {11};

% Colour 4 (even shading)
% a4 right half: TR of RH
\fill[pattern=horizontaleven, pattern color=shadefour]
  (24.375,32.5) -- (26,32.5) -- (26,26) -- cycle;
\node[font=\NumStyle] at (25.35,31.2) {4};

% a6 right half: LH union (BL of RH)
\fill[pattern=horizontaleven, pattern color=shadefour]
  (35.75,32.5) -- (37.375,32.5) -- (39,26) -- (35.75,26) -- cycle;
\node[font=\NumStyle] at (37.0139,29.25) {12};

% e2 right half: TR of RH
\fill[pattern=horizontaleven, pattern color=shadefour]
  (11.375,6.5) -- (13,6.5) -- (13,0) -- cycle;
\node[font=\NumStyle] at (12.35,5.2) {4};

% e4 right half: LH union (BL of RH)
\fill[pattern=horizontaleven, pattern color=shadefour]
  (22.75,6.5) -- (24.375,6.5) -- (26,0) -- (22.75,0) -- cycle;
\node[font=\NumStyle] at (24.0139,3.25) {12};

% Colour 5 (odd shading)
% b3 left half: RH union (TR of LH)
\fill[pattern=horizontalodd, pattern color=shadefive]
  (13,26) -- (16.25,26) -- (16.25,19.5) -- (14.625,19.5) -- cycle;
\node[font=\NumStyle] at (14.9861,22.75) {5};

% b5 left half: BL of LH
\fill[pattern=horizontalodd, pattern color=shadefive]
  (26,26) -- (26,19.5) -- (27.625,19.5) -- cycle;
\node[font=\NumStyle] at (27.6,24) {13};

% f1 left half: RH union (TR of LH)
\fill[pattern=horizontalodd, pattern color=shadefive]
  (0,0) -- (3.25,0) -- (3.25,-6.5) -- (1.625,-6.5) -- cycle;
\node[font=\NumStyle] at (1.9861,-3.25) {5};

% f3 left half: BL of LH
\fill[pattern=horizontalodd, pattern color=shadefive]
  (13,0) -- (13,-6.5) -- (14.625,-6.5) -- cycle;
\node[font=\NumStyle] at (14.6,-2) {13};

% Colour 6 (odd shading)
% a4 left half: TR of RH
\fill[pattern=horizontalodd, pattern color=shadesix]
  (21.125,32.5) -- (22.75,32.5) -- (22.75,26) -- cycle;
\node[font=\NumStyle] at (22.1,31.2) {6};

% a6 left half: LH union (BL of RH)
\fill[pattern=horizontalodd, pattern color=shadesix]
  (32.5,32.5) -- (34.125,32.5) -- (35.75,26) -- (32.5,26) -- cycle;
\node[font=\NumStyle] at (33.7639,29.25) {14};

% e2 left half: TR of RH
\fill[pattern=horizontalodd, pattern color=shadesix]
  (8.125,6.5) -- (9.75,6.5) -- (9.75,0) -- cycle;
\node[font=\NumStyle] at (9.1,5.2) {6};

% e4 left half: LH union (BL of RH)
\fill[pattern=horizontalodd, pattern color=shadesix]
  (19.5,6.5) -- (21.125,6.5) -- (22.75,0) -- (19.5,0) -- cycle;
\node[font=\NumStyle] at (20.7639,3.25) {14};

% Colour 7 (even shading)
% b4 left half: BR of RH
\fill[pattern=horizontaleven, pattern color=shadeseven]
  (21.125,19.5) -- (22.75,19.5) -- (22.75,26) -- cycle;
\node[font=\NumStyle] at (22.1,20.8) {7};

% b6 left half: fully shaded
\fill[pattern=horizontaleven, pattern color=shadeseven]
  (32.5,19.5) rectangle (35.75,26);
\node[font=\NumStyle] at (34.125,22.75) {15};

% f2 left half: BR of RH
\fill[pattern=horizontaleven, pattern color=shadeseven]
  (8.125,-6.5) -- (9.75,-6.5) -- (9.75,0) -- cycle;
\node[font=\NumStyle] at (9.1,-5.2) {7};

% f4 left half: fully shaded
\fill[pattern=horizontaleven, pattern color=shadeseven]
  (19.5,-6.5) rectangle (22.75,0);
\node[font=\NumStyle] at (21.125,-3.25) {15};

% Colour 8 (even shading)
% a5 left half: RH union (BR of LH)
\fill[pattern=horizontaleven, pattern color=shadeeight]
  (26,26) -- (27.625,32.5) -- (29.25,32.5) -- (29.25,26) -- cycle;
\node[font=\NumStyle] at (27.9861,29.25) {8};

% e3 left half: RH union (BR of LH)
\fill[pattern=horizontaleven, pattern color=shadeeight]
  (13,0) -- (14.625,6.5) -- (16.25,6.5) -- (16.25,0) -- cycle;
\node[font=\NumStyle] at (14.9861,3.25) {8};

% main grid
\draw [line width=2pt] (0.,32.5)-- (0.,-6.5);
\draw [line width=2pt] (13.,-6.5)-- (13.,32.5);
\draw [line width=2pt] (26.,32.5)-- (26.,-6.5);
\draw [line width=2pt] (0.,0.)-- (39.,0.);
\draw [line width=2pt] (0.,26.)-- (39.,26.);
\draw [line width=2pt] (39.,13.)-- (0.,13.);
\draw [line width=2pt] (39.,32.5)-- (39.,-6.5);
\draw [line width=1pt] (6.5,32.5)-- (6.5,-6.5);
\draw [line width=1pt] (19.5,-6.5)-- (19.5,32.5);
\draw [line width=1pt] (32.5,32.5)-- (32.5,-6.5);
\draw [line width=1pt] (0.,32.5)-- (39.,32.5);
\draw [line width=1pt] (39.,19.5)-- (0.,19.5);
\draw [line width=1pt] (0.,6.5)-- (39.,6.5);
\draw [line width=1pt] (39.,-6.5)-- (0.,-6.5);

% required half-block splits
\draw [line width=1pt] (3.25,0)--(3.25,6.5);       % e1
\draw [line width=1pt] (9.75,0)--(9.75,6.5);       % e2
\draw [line width=1pt] (16.25,0)--(16.25,6.5);     % e3
\draw [line width=1pt] (22.75,0)--(22.75,6.5);     % e4

\draw [line width=1pt] (3.25,-6.5)--(3.25,0);      % f1
\draw [line width=1pt] (9.75,-6.5)--(9.75,0);      % f2
\draw [line width=1pt] (16.25,-6.5)--(16.25,0);    % f3
\draw [line width=1pt] (22.75,-6.5)--(22.75,0);    % f4

\draw [line width=1pt] (16.25,19.5)--(16.25,26);   % b3
\draw [line width=1pt] (22.75,19.5)--(22.75,26);   % b4
\draw [line width=1pt] (29.25,19.5)--(29.25,26);   % b5
\draw [line width=1pt] (35.75,19.5)--(35.75,26);   % b6

\draw [line width=1pt] (16.25,26)--(16.25,32.5);   % a3
\draw [line width=1pt] (22.75,26)--(22.75,32.5);   % a4
\draw [line width=1pt] (29.25,26)--(29.25,32.5);   % a5
\draw [line width=1pt] (35.75,26)--(35.75,32.5);   % a6

% diagonals actually needed

% b4 right
\draw [line width=1pt] (24.375,19.5)--(26,26);

% f2 right
\draw [line width=1pt] (11.375,-6.5)--(13,0);

% a5 right
\draw [line width=1pt] (29.25,26)--(30.875,32.5);

% e3 right
\draw [line width=1pt] (16.25,0)--(17.875,6.5);

% b3 right
\draw [line width=1pt] (16.25,26)--(17.875,19.5);

% b5 right
\draw [line width=1pt] (29.25,26)--(30.875,19.5);

% f1 right
\draw [line width=1pt] (3.25,0)--(4.875,-6.5);

% f3 right
\draw [line width=1pt] (16.25,0)--(17.875,-6.5);

% a4 right
\draw [line width=1pt] (24.375,32.5)--(26,26);

% a6 right
\draw [line width=1pt] (37.375,32.5)--(39,26);

% e2 right
\draw [line width=1pt] (11.375,6.5)--(13,0);

% e4 right
\draw [line width=1pt] (24.375,6.5)--(26,0);

% b3 left
\draw [line width=1pt] (13,26)--(14.625,19.5);

% b5 left
\draw [line width=1pt] (26,26)--(27.625,19.5);

% f1 left
\draw [line width=1pt] (0,0)--(1.625,-6.5);

% f3 left
\draw [line width=1pt] (13,0)--(14.625,-6.5);

% a4 left
\draw [line width=1pt] (21.125,32.5)--(22.75,26);

% a6 left
\draw [line width=1pt] (34.125,32.5)--(35.75,26);

% e2 left
\draw [line width=1pt] (8.125,6.5)--(9.75,0);

% e4 left
\draw [line width=1pt] (21.125,6.5)--(22.75,0);

% b4 left
\draw [line width=1pt] (21.125,19.5)--(22.75,26);

% f2 left
\draw [line width=1pt] (8.125,-6.5)--(9.75,0);

% a5 left
\draw [line width=1pt] (26,26)--(27.625,32.5);

% e3 left
\draw [line width=1pt] (13,0)--(14.625,6.5);

% axis labels
\draw (0,-8.5) node[anchor=center] {$0$};
\draw (6.5,-8.5) node[anchor=center] {$\frac12p$};
\draw (13,-8.5) node[anchor=center] {$p$};
\draw (19.5,-8.5) node[anchor=center] {$\frac32p$};
\draw (26,-8.5) node[anchor=center] {$2p$};
\draw (32.5,-8.5) node[anchor=center] {$\frac52p$};
\draw (39,-8.5) node[anchor=center] {$3p$};
\draw (-2.5,-6.5) node[anchor=center] {$-\frac12p$};
\draw (-2.5,0) node[anchor=center] {$0$};
\draw (-2.5,6.5) node[anchor=center] {$\frac12p$};
\draw (-2.5,13) node[anchor=center] {$p$};
\draw (-2.5,19.5) node[anchor=center] {$\frac32p$};
\draw (-2.5,26) node[anchor=center] {$2p$};
\draw (-2.5,32.5) node[anchor=center] {$\frac52p$};
\end{tikzpicture}
		%\captionof{figure}{Next TODO figure.}
		%\label{fig:slope2_fig8}
	\end{minipage}%
 \captionsetup{skip=3pt}
    \caption{The left figure shows all points of $T_1^*$ that have four $(+2)$-equivalent copies in $\cG^+$, and the right figure shows their respective regionwise images under the mappings of \autoref{fig:slope2_fig34} and \autoref{fig:slope2_fig56}.}
    \vspace{1.2cm}
     \label{fig:slope2_fig78}
\end{figure}

\begin{figure}[h!!]
	\centering
	\begin{minipage}{0.47\textwidth}
		\centering
		\input{figures/slope2_fig9}
  \captionsetup{skip=3pt}
        \caption{Full partitioning of the union of all $(+2)$-equivalence classes consisting of four elements of $T_1$ into families of $(+2)$-equivalent regions.}
		\label{fig:slope2_fig9}
	\end{minipage}
    \hfill
	\begin{minipage}{0.47\textwidth}
		\centering
		\input{figures/slopehalf_fig9}
  \captionsetup{skip=3pt}
        \caption{Full partitioning of the union of all $\left(+\frac12\right)$-equivalence classes consisting of four elements of $T_1^*$ into families of $\left(+\frac12\right)$-equivalent regions.}
		\label{fig:slopehalf_fig9}
	\end{minipage}
\end{figure}
\begin{figure}[h!!]
	\begin{minipage}{0.47\textwidth}
		\centering
		\definecolor{removal}{rgb}{1, 0, 0}
\begin{tikzpicture}[line cap=round,line join=round,>=triangle 45,x=2.47cm,y=2.47cm, scale=0.85, transform shape]
\clip(-0.31,-0.81) rectangle (3.15,2.65);

\draw [line width=2pt] (0.,2.5)-- (0.,-0.5);
\draw [line width=2pt] (1.,-0.5)-- (1.,2.5);
\draw [line width=2pt] (2.,2.5)-- (2.,-0.5);
\draw [line width=2pt] (0.,0.)-- (3.,0.);
\draw [line width=2pt] (0.,2.)-- (3.,2.);
\draw [line width=2pt] (3.,1.)-- (0.,1.);
\draw [line width=2pt] (3.,2.5)-- (3.,-0.5);
\draw [line width=1pt] (0.5,2.5)-- (0.5,-0.5);
\draw [line width=1pt] (1.5,-0.5)-- (1.5,2.5);
\draw [line width=1pt] (2.5,2.5)-- (2.5,-0.5);
\draw [line width=1pt] (0.,2.5)-- (3.,2.5);
\draw [line width=1pt] (3.,1.5)-- (0.,1.5);
\draw [line width=1pt] (0.,0.5)-- (3.,0.5);
\draw [line width=1pt] (3.,-0.5)-- (0.,-0.5);

\draw [line width=1pt] (0.5,2.5)--(1,2);
\draw [line width=1pt] (0.75,2)--(0.75,2.5);
\draw [line width=1pt] (0.5,2)--(0.75,2.5);
\fill[pattern=horizontalodd, pattern color=removal](0.6667,2.3333)--(0.75,2.5)--(0.75,2.25)--cycle;

\draw [line width=1pt] (2.25,2.5)--(2.25,2);
\draw [line width=1pt] (2,2)--(2.5,2.5);
\draw [line width=1pt] (2.25,2.5)--(2.5,2);
\fill[pattern=horizontalodd, pattern color=removal](2.25,2.5)--(2.3333,2.3333)--(2.25,2.25)--cycle;

\draw [line width=1pt] (0.25, 1.5)--(0.5, 2);
\fill[pattern=horizontaleven, pattern color=removal](0.25, 1.5)--(0.5,1.5)--(0.5, 2)--cycle;

\draw [line width=1pt] (0.5,1.5)--(1,2);
\draw [line width=1pt] (0.75,1.5)--(0.75,2);
\draw [line width=1pt] (0.625,1.5)--(0.75,2);
\fill[pattern=horizontaleven, pattern color=removal](0.75,1.75)--(0.75,2)--(0.6667,1.6667)--cycle;

\draw [line width=1pt] (1,2)--(1.125,1.5);
\draw [line width=1pt] (1,2)--(1.25,1.5);
\fill[pattern=horizontalodd, pattern color=removal](1,2)--(1.125,1.5)--(1.25,1.5)--cycle;

\draw [line width=1pt] (1.75,1.5)--(2,2);
\draw [line width=1pt] (1.875,1.5)--(2,2);
\fill[pattern=horizontalodd, pattern color=removal](1.75,1.5)--(1.875,1.5)--(2,2)--cycle;

\draw [line width=1pt] (2.25,1.5)--(2.25,2);
\draw [line width=1pt] (2,2)--(2.5,1.5);
\draw [line width=1pt] (2.375,1.5)--(2.25,2);
\fill[pattern=horizontaleven, pattern color=removal](2.25,1.75)--(2.3333,1.6667)--(2.25,2)--cycle;

\draw [line width=1pt] (2.5,2)--(2.75,1.5);
\fill[pattern=horizontaleven, pattern color=removal](2.5,1.5)--(2.75,1.5)--(2.5,2)--cycle;

\draw [line width=1pt] (0.25,0.5)--(0.5,0);
\fill[pattern=horizontalodd, pattern color=removal](0.25,0.5)--(0.5,0.5)--(0.5,0)--cycle;

\draw [line width=1pt] (0.625,0.5)--(0.75,0);
\draw [line width=1pt] (0.75,0.5)--(0.75,0);
\draw [line width=1pt] (0.5,0.5)--(1,0);
\fill[pattern=horizontalodd, pattern color=removal](0.6667,0.3333)--(0.75,0)--(0.75,0.25)--cycle;

\draw [line width=1pt] (1,0)--(1.125,0.5);
\draw [line width=1pt] (1,0)--(1.25,0.5);
\fill[pattern=horizontaleven, pattern color=removal](1,0)--(1.125,0.5)--(1.25,0.5)--cycle;

\draw [line width=1pt] (1.75,0.5)--(2,0);
\draw [line width=1pt] (1.875,0.5)--(2,0);
\fill[pattern=horizontaleven, pattern color=removal](1.75,0.5)--(1.875,0.5)--(2,0)--cycle;

\draw [line width=1pt] (2.25,0)--(2.25,0.5);
\draw [line width=1pt] (2,0)--(2.5,0.5);
\draw [line width=1pt] (2.25,0)--(2.375,0.5);
\fill[pattern=horizontalodd, pattern color=removal](2.25,0)--(2.3333,0.3333)--(2.25,0.25)--cycle;

\draw [line width=1pt] (2.5,0)--(2.75,0.5);
\fill[pattern=horizontalodd, pattern color=removal](2.5,0)--(2.5,0.5)--(2.75,0.5)--cycle;

\draw [line width=1pt] (0.5,0)--(0.75,-0.5);
\draw [line width=1pt] (0.75,-0.5)--(0.75,0);
\draw [line width=1pt] (0.5,-0.5)--(1,0);
\fill[pattern=horizontaleven, pattern color=removal](0.6667,-0.3333)--(0.75,-0.25)--(0.75,-0.5)--cycle;

\draw [line width=1pt] (2.25,-0.5)--(2.25,0);
\draw [line width=1pt] (2,0)--(2.5,-0.5);
\draw [line width=1pt] (2.25,-0.5)--(2.5,0);
\fill[pattern=horizontaleven, pattern color=removal](2.25,-0.5)--(2.25,-0.25)--(2.3333,-0.3333)--cycle;

\draw (0,-0.65) node[anchor=center] {$0$};
\draw (0.5,-0.65) node[anchor=center] {$\frac12p$};
\draw (1,-0.65) node[anchor=center] {$p$};
\draw (1.5,-0.65) node[anchor=center] {$\frac32p$};
\draw (2,-0.65) node[anchor=center] {$2p$};
\draw (2.5,-0.65) node[anchor=center] {$\frac52p$};
\draw (3,-0.65) node[anchor=center] {$3p$};
\draw (-0.19,-0.5) node[anchor=center] {$-\frac12p$};
\draw (-0.19,0) node[anchor=center] {$0$};
\draw (-0.19,0.5) node[anchor=center] {$\frac12p$};
\draw (-0.19,1) node[anchor=center] {$p$};
\draw (-0.19,1.5) node[anchor=center] {$\frac32p$};
\draw (-0.19,2) node[anchor=center] {$2p$};
\draw (-0.19,2.5) node[anchor=center] {$\frac52p$};
\end{tikzpicture}
        \captionsetup{skip=3pt}
		\caption{Points removed from $T_1$ because of the partitionings for $m=\pm 2$.}
		\label{fig:slope2_removed}
	\end{minipage}
	\hfill
	\begin{minipage}{0.47\textwidth}
		\centering
		
		\definecolor{removal}{rgb}{1, 0, 0}
\begin{tikzpicture}[line cap=round,line join=round,>=triangle 45,x=2.47cm,y=2.47cm, scale=0.85, transform shape]
\clip(-0.31,-0.81) rectangle (3.15,2.65);

\draw [line width=2pt] (0.,2.5)-- (0.,-0.5);
\draw [line width=2pt] (1.,-0.5)-- (1.,2.5);
\draw [line width=2pt] (2.,2.5)-- (2.,-0.5);
\draw [line width=2pt] (0.,0.)-- (3.,0.);
\draw [line width=2pt] (0.,2.)-- (3.,2.);
\draw [line width=2pt] (3.,1.)-- (0.,1.);
\draw [line width=2pt] (3.,2.5)-- (3.,-0.5);
\draw [line width=1pt] (0.5,2.5)-- (0.5,-0.5);
\draw [line width=1pt] (1.5,-0.5)-- (1.5,2.5);
\draw [line width=1pt] (2.5,2.5)-- (2.5,-0.5);
\draw [line width=1pt] (0.,2.5)-- (3.,2.5);
\draw [line width=1pt] (3.,1.5)-- (0.,1.5);
\draw [line width=1pt] (0.,0.5)-- (3.,0.5);
\draw [line width=1pt] (3.,-0.5)-- (0.,-0.5);

\draw [line width=1pt] (0,2)--(0.5,2.5);
\draw [line width=1pt] (0,2)--(1,2.5);
\fill[pattern=verticaleven, pattern color=removal](0,2)--(0.5,2.5)--(1,2.5)--cycle;

\draw [line width=1pt] (2,2.5)--(3,2);
\draw [line width=1pt] (2.5,2.5)--(3,2);
\fill[pattern=verticalodd, pattern color=removal](2,2.5)--(2.5,2.5)--(3,2)--cycle;

\draw [line width=1pt] (1,2)--(0,1.75);
\draw [line width=1pt] (1,2)--(0.5,1.75);
\draw [line width=1pt] (1,1.75)--(0,1.75);
\draw [line width=1pt] (0.5,1.5)--(0,2);
\draw [line width=1pt] (0.5,1.75)--(0,1.5);
\draw [line width=1pt] (1,1.5)--(0,1.75);
\draw [line width=1pt] (1,2)--(0.5,1.5);
\fill[pattern=verticalodd, pattern color=removal](1,2)--(0.5,1.75)--(0.75,1.75)--cycle;
\fill[pattern=verticalodd, pattern color=removal](0.5,1.875)--(0.2,1.8)--(0.25,1.75)--(0.5,1.75)--cycle;
\fill[pattern=verticalodd, pattern color=removal](0.5,1.75)--(0.5,1.625)--(0.3333,1.6667)--cycle;

\draw [line width=1pt] (2,2)--(3,1.75);
\draw [line width=1pt] (2,2)--(2.5,1.75);
\draw [line width=1pt] (2,1.75)--(3,1.75);
\draw [line width=1pt] (2.5,1.5)--(3,2);
\draw [line width=1pt] (2.5,1.75)--(3,1.5);
\draw [line width=1pt] (2,1.5)--(3,1.75);
\draw [line width=1pt] (2,2)--(2.5,1.5);
\fill[pattern=verticaleven, pattern color=removal](2,2)--(2.5,1.75)--(2.25,1.75)--cycle;
\fill[pattern=verticaleven, pattern color=removal](2.5,1.875)--(2.8,1.8)--(2.75,1.75)--(2.5,1.75)--cycle;
\fill[pattern=verticaleven, pattern color=removal](2.5,1.75)--(2.5,1.625)--(2.6667,1.6667)--cycle;

\draw [line width=1pt] (1,0)--(0,0.25);
\draw [line width=1pt] (1,0)--(0.5,0.25);
\draw [line width=1pt] (1,0.25)--(0,0.25);
\draw [line width=1pt] (0.5,0.5)--(0,0);
\draw [line width=1pt] (0.5,0.25)--(0,0.5);
\draw [line width=1pt] (1,0.5)--(0,0.25);
\draw [line width=1pt] (1,0)--(0.5,0.5);
\fill[pattern=verticalodd, pattern color=removal](1,0)--(0.5,0.25)--(0.75,0.25)--cycle;
\fill[pattern=verticalodd, pattern color=removal](0.5,0.125)--(0.2,0.2)--(0.25,0.25)--(0.5,0.25)--cycle;
\fill[pattern=verticalodd, pattern color=removal](0.5,0.25)--(0.5,0.375)--(0.3333,0.3333)--cycle;

\draw [line width=1pt] (2,0)--(3,0.25);
\draw [line width=1pt] (2,0)--(2.5,0.25);
\draw [line width=1pt] (2,0.25)--(3,0.25);
\draw [line width=1pt] (2.5,0.5)--(3,0);
\draw [line width=1pt] (2.5,0.25)--(3,0.5);
\draw [line width=1pt] (2,0.5)--(3,0.25);
\draw [line width=1pt] (2,0)--(2.5,0.5);
\fill[pattern=verticaleven, pattern color=removal](2,0)--(2.5,0.25)--(2.25,0.25)--cycle;
\fill[pattern=verticaleven, pattern color=removal](2.5,0.125)--(2.8,0.2)--(2.75,0.25)--(2.5,0.25)--cycle;
\fill[pattern=verticaleven, pattern color=removal](2.5,0.25)--(2.5,0.375)--(2.6667,0.3333)--cycle;

\draw [line width=1pt] (0,0)--(0.5,-0.5);
\draw [line width=1pt] (0,0)--(1,-0.5);
\fill[pattern=verticaleven, pattern color=removal](0,0)--(0.5,-0.5)--(1,-0.5)--cycle;

\draw [line width=1pt] (2,-0.5)--(3,0);
\draw [line width=1pt] (2.5,-0.5)--(3,0);
\fill[pattern=verticalodd, pattern color=removal](2,-0.5)--(2.5,-0.5)--(3,0)--cycle;

\draw (0,-0.65) node[anchor=center] {$0$};
\draw (0.5,-0.65) node[anchor=center] {$\frac12p$};
\draw (1,-0.65) node[anchor=center] {$p$};
\draw (1.5,-0.65) node[anchor=center] {$\frac32p$};
\draw (2,-0.65) node[anchor=center] {$2p$};
\draw (2.5,-0.65) node[anchor=center] {$\frac52p$};
\draw (3,-0.65) node[anchor=center] {$3p$};
\draw (-0.19,-0.5) node[anchor=center] {$-\frac12p$};
\draw (-0.19,0) node[anchor=center] {$0$};
\draw (-0.19,0.5) node[anchor=center] {$\frac12p$};
\draw (-0.19,1) node[anchor=center] {$p$};
\draw (-0.19,1.5) node[anchor=center] {$\frac32p$};
\draw (-0.19,2) node[anchor=center] {$2p$};
\draw (-0.19,2.5) node[anchor=center] {$\frac52p$};
\end{tikzpicture}
  \captionsetup{skip=3pt}
		\caption{Points removed from $T_1$ because of the partitionings for $m=\pm\frac12$.}
		\label{fig:slopehalf_removed}
	\end{minipage}
\end{figure}

\newpage
%%% AUTHOR: optional acknowledgments here
\section*{Acknowledgments} %%  you may comment this out if no Ackno
The authors are grateful to the anonymous referees for their useful comments and suggestions that helped us improve the presentation of the manuscript.

%%% AUTHOR:
%%% Bibliography goes here. Note that the arXiv cannot process bibtex
%%% or biber bibliographies.  Example of acceptable bibliograpy format:
%\printbibliography
%\bibliography{aic-constructions-no-k-in-line}

%% AUTHOR: You can generate such a bibliography from a .bib file by 
%% running pdflatex/bibtex/pdflatex/pdflatex and then pasting the .bbl file
%% between \begin{thebibliography} and \end{bibliography}

%%% AUTHOR: Include a short description of each author following the
%%% structure below. Use the same short tags used previously.  
%%% Use \imageat{} and \imagedot{} instead of "@" and "." in
%%% email addresses-this replaces the symbols with graphics to avoid 
%%% e-mail address harvesting from the .pdf file
\begin{aicauthors}
\begin{authorinfo}[benedek]
  Benedek Kov{\'a}cs\\
  Eötvös Loránd University\\
  Budapest, Hungary\\
benoke981\imageat{}gmail\imagedot{}com
\end{authorinfo}
\begin{authorinfo}[zoli]
  Zolt{\'a}n L{\'o}r{\'a}nt Nagy\\
  Eötvös Loránd University\\
  Budapest, Hungary\\
zoltan\imagedot{}lorant\imagedot{}nagy\imageat{}ttk\imagedot{}elte\imagedot{}hu
\end{authorinfo}
\begin{authorinfo}[david]
  D{\'a}vid R. Szab{\'o}\\
  Eötvös Loránd University \& Alfr{\'e}d R{\'e}nyi Institute of Mathematics\\
  Budapest, Hungary\\
szabo\imagedot{}r\imagedot{}david\imageat{}gmail\imagedot{}com
\end{authorinfo}
\end{aicauthors}

\end{document}